
%

\documentclass{amsart}

\usepackage{amssymb, amsmath}
\usepackage{graphicx}

\newtheorem{theorem}{Theorem}[section]
\newtheorem{lemma}[theorem]{Lemma}
\newtheorem{claim}[theorem]{Claim}
\newtheorem{corollary}[theorem]{Corollary}
\newtheorem{proposition}[theorem]{Proposition}

\theoremstyle{definition}
\newtheorem{definition}[theorem]{Definition}

\newtheorem{question}[theorem]{Question}

\theoremstyle{remark}
\newtheorem{remark}[theorem]{Remark}

\numberwithin{equation}{section}

\begin{document}

\title{Right-angled Artin groups and full subgraphs of graphs}


\author{Takuya Katayama}
\address{Department of Mathematics, Hiroshima University, 1-3-1 Kagamiyama, Higashi-Hiroshima, 739-8526, Japan}
\curraddr{}
\email{tkatayama@hiroshima-u.ac.jp}
\thanks{}

\subjclass[2010]{20F36 (primary)}

\keywords{Right-angled Artin group}

\date{}

\dedicatory{}

\begin{abstract}
For a finite graph $\Gamma$, let $G(\Gamma)$ be the right-angled Artin group defined by the complement graph of $\Gamma$. 
We show that, for any linear forest $\Lambda$ and any finite graph $\Gamma$, $G(\Lambda)$ can be embedded into $G(\Gamma)$ if and only if $\Lambda$ can be realised as a full subgraph of $\Gamma$. 
We also prove that if we drop the assumption that $\Lambda$ is a linear forest, then the above assertion does not hold, namely, for any finite graph $\Lambda$, which is not a linear forest, there exists a finite  graph $\Gamma$ such that $G(\Lambda)$ can be embedded into $G(\Gamma)$, though $\Lambda$ cannot be embedded into $\Gamma$ as a full subgraph. 
\end{abstract}

\maketitle

\section{Introduction and statement of results} 
\label{intro}

Let $\Gamma$ be a finite simplicial graph (abbreviated a finite graph), with the vertex set 
$V(\Gamma) = \{ v_1 , v_2 , \ldots , v_n \} $ and the edge set $E(\Gamma)$. 
In this paper, we denote an element of $E(\Gamma)$ by $[v_i, v_j]$. 
Then the {\it right-angled Artin group} (abbreviated {\it RAAG}) on $\Gamma$ is the group given by the  following presentation:
$$
A(\Gamma) = \langle \ v_1 , v_2 , \ldots , v_n  \ \mid \  v_i v_j v_i^{-1} v_j^{-1} = 1  \ \mbox{if} \ [ v_i,v_j ] \in E(\Gamma) \ \rangle
.$$ 
In this paper, we denote $G(\Gamma)$ to be $A(\Gamma^c)$, where $\Gamma^c$ is the {\it complement} or the {\it opposite graph} of $\Gamma$, namely, $\Gamma^c$ is the graph consisting of the vertex set $V(\Gamma^c) = V(\Gamma)$ and the edge set $E(\Gamma^c)= \{ [u, v ] \mid u,v \in V(\Gamma), \ [u , v ] \notin E(\Gamma) \}$.
Namely, 
$$
G(\Gamma) = \langle \ v_1 , v_2 , \ldots , v_n  \ \mid \ v_i v_j v_i^{-1} v_j^{-1} = 1  \ \mbox{if} \ [ v_i,v_j ] \notin E(\Gamma) \ \rangle
.$$

The following question was raised by S. Kim and T. Koberda \cite[Question 1.1]{Kim-Koberda-1} (see also \cite{Crisp-Sageev-Sapir}). 

\begin{question}
Is there an algorithm to decide whether there exists an embedding between two given RAAGs?
\label{CSS}
\end{question}

Several studies have demonstrated that the embeddability of RAAGs can be understood via certain graph theoretical concepts (e.g. \cite{Casals-Ruiz}, \cite{Casals-Ruiz-Duncan-Kazachkov}, \cite{Kim}, \cite{Kim-Koberda-1}, \cite{Kim-Koberda-2} and \cite{Lee-Lee}). 
In fact, theorems due to Kim and Koberda \cite{Kim-Koberda-1} state that the following for any finite graphs $\Lambda$ and $\Gamma$. 
\begin{enumerate}
 \item[$\bullet$] Any embedding of $\Lambda$ into the ``extension graph" of $\Gamma$ gives rise to an embedding of $A(\Lambda)$ into $A(\Gamma)$. 
 \item[$\bullet$] Any embedding of $A(\Lambda)$ into $A(\Gamma)$ gives rise to an embedding of $\Lambda$ into the ``clique graph" of the ``extension graph" of  $\Gamma$. 
 \end{enumerate}
These studies suggest us that certain graph theoretical tools can be useful to study Question \ref{CSS} to which we do not know the answer. 
This paper mainly concentrates on giving a complete answer to Question \ref{CSS-induced-condition} (see below), which concerns possibly the simplest graph theoretical obstruction to the existence of embeddings between RAAGs. 

In order to state Question \ref{CSS-induced-condition}, we prepare some terminology. 
A subgraph $\Lambda$ of a graph $\Gamma$ is said to be  {\it full} or {\it induced} if $E(\Lambda)$ contains every $e \in E(\Gamma)$ whose end points both lie in $V(\Lambda)$. 
Note that the full subgraph $\Lambda$ of $\Gamma$ is uniquely determined by its vertex set $V':= V(\Lambda) \subset V(\Gamma)$. 
So we denote $\Lambda$ by $\Gamma[V']$ and say that $V'$ induces $\Lambda = \Gamma[V']$. 
Besides we denote by $\Lambda \leq \Gamma$ if there exists an full subgraph of $\Gamma$, which is isomorphic to $\Lambda$. 
We denote by $G \hookrightarrow H$ if there exists an injective homomorphism (abbreviated an {\it embedding}) from a group $G$ into a group $H$. 
It is well-known that the implication $\Lambda \leq \Gamma \Rightarrow G(\Lambda) \hookrightarrow G(\Gamma)$ is always true. 
However, in general, the converse implication $G(\Lambda) \hookrightarrow G(\Gamma) \Rightarrow \Lambda \leq \Gamma $ is false. 
In fact $G(K_3) = F_3 \hookrightarrow F_2 = G(K_2)$, though $K_3 \not\leq K_2$, where $K_2$ (resp. $K_3$) denotes the complete graph on $2$ (resp. $3$) vertices. 
So, we can ask the following natural question. 

\begin{question}
Which finite graph $\Lambda$ satisfies the following property $(*)$?

$(*)$ For any finite graph $\Gamma$, $G(\Lambda) \hookrightarrow G(\Gamma)$ implies $\Lambda \leq \Gamma$. 
\label{CSS-induced-condition}
\end{question}

Before stating our results, we define some symbols and terminology of graphs. 

\begin{enumerate}
 \item[$\bullet$] $K_n$: the {\it complete graph} on $n$ vertices, i.e., $V(K_n)$ has $n$ elements and each pair of vertices in $V(K_n)$ spans an edge.   
 \item[$\bullet$] $P_n$: the {\it path graph}  on $n$ vertices, i.e., $P_n$ is the connected graph consisting of $(n-2)$ vertices of degree $2$ and two vertices of degree $1$. 
  A {\it linear forest} is the disjoint union of path graphs.  
 \item[$\bullet$] $C_n$: the {\it cyclic graph} on $n \ (\geq 3)$ vertices, i.e., $C_n$ is the connected graph consisting of $n$ vertices of degree $2$. 
\end{enumerate}

The main theorem of this paper is the following. 
 
\begin{theorem}
Let $\Lambda$ be a finite graph. 
\begin{enumerate}
 \item[(1)] If $\Lambda$ is a linear forest, then $\Lambda$ has property $(*)$, namely, for any finite graph $\Gamma$, $G(\Lambda) \hookrightarrow G(\Gamma)$ implies $\Lambda \leq \Gamma$. 
 \item[(2)] If $\Lambda$ is not a linear forest, then $\Lambda$ does not have property $(*)$, namely, there exists a finite graph $\Gamma$ such that $G(\Lambda) \hookrightarrow G(\Gamma)$, though $\Lambda \not\leq \Gamma$.  
\end{enumerate}
\label{Main-theorem}
\end{theorem}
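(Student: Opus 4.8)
Both parts are really statements about complements: $G(\Lambda)\hookrightarrow G(\Gamma)$ \emph{is} $A(\Lambda^c)\hookrightarrow A(\Gamma^c)$, and $\Lambda\leq\Gamma$ is equivalent to $\Lambda^c\leq\Gamma^c$. I would use two standard facts about RAAGs: first, $A(\Delta_1*\Delta_2)=A(\Delta_1)\times A(\Delta_2)$ for the graph join, so that $G\bigl(\bigsqcup_iP_{n_i}\bigr)=\prod_iG(P_{n_i})$, and the maximal rank of a free abelian subgroup of $A(\Delta)$ equals the clique number $\omega(\Delta)$; second, Kim--Koberda's co-contraction: for non-adjacent vertices $x,y$ of $\Delta$, identifying them to a single vertex with neighbourhood $N(x)\cap N(y)$ produces a graph $CO_{\{x,y\}}(\Delta)$ with $A(CO_{\{x,y\}}(\Delta))\hookrightarrow A(\Delta)$. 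In particular, ``decontracting'' one vertex of $\Lambda^c$ into a non-adjacent pair yields a graph $\Gamma$ with $G(\Lambda)\hookrightarrow G(\Gamma)$.

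For part (1) the plan is induction on $|V(\Lambda)|$. If $\Lambda$ is edgeless, the claim is exactly $\omega(\Gamma^c)\geq|V(\Lambda)|$, i.e.\ that $\Gamma$ has an independent set of size $|V(\Lambda)|$. Otherwise $\Lambda$ has a path component with a leaf $u$ whose unique neighbour is $u'$; set $\Lambda_0=\Lambda\setminus\{u\}$, again a linear forest. Given $\phi\colon G(\Lambda)\hookrightarrow G(\Gamma)$, restrict to the full parabolic $G(\Lambda_0)\leq G(\Lambda)$ and apply induction to get a full copy of $\Lambda_0$ in $\Gamma$; the real content is to \emph{extend} this to a full copy of $\Lambda$, i.e.\ to find a vertex $w\in V(\Gamma)$ adjacent to the copy of $u'$ and to no other vertex of that copy of $\Lambda_0$. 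Since the generator $u$ commutes in $G(\Lambda)$ with precisely the non-neighbours of $u$ in $\Lambda^c$ (a prescribed sub-forest), such a $w$ should be readable off from $\phi(u)$, \emph{provided} the copy of $\Lambda_0$ is chosen compatibly with $\phi$; so in practice I would strengthen the inductive statement to track how the standard generators sit relative to the located subgraph and use the structure of centralizers and parabolic subgroups of $A(\Gamma^c)$ to produce $w$. This compatibility step is the main obstacle, and it is where the hypothesis ``$\Lambda$ is a linear forest'' (equivalently, no induced $K_{1,3}$ and no induced cycle) is genuinely used, to exclude the exotic configurations that make $G(K_3)=F_3\hookrightarrow F_2=G(K_2)$ possible; I expect it to absorb most of the argument.

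For part (2) I would first reduce to model graphs: a finite graph is a linear forest if and only if it contains, as a \emph{full} subgraph, neither $K_{1,3}$ nor any cycle $C_n$ with $n\geq3$, so a $\Lambda$ that is not a linear forest has a full subgraph $\Lambda_0$ isomorphic to $K_{1,3}$ or to some $C_n$. It then suffices to establish (a) that $K_{1,3}$ and each $C_n$ fail $(*)$, and (b) a persistence lemma: if a full subgraph $\Lambda_0$ of $\Lambda$ fails $(*)$, then so does $\Lambda$. For (a) I would use decontraction: since $K_{1,3}^c=K_3\sqcup K_1$, splitting its isolated vertex into two pendant vertices attached to two distinct vertices of the triangle produces $\Gamma^c$ with $G(K_{1,3})\hookrightarrow A(\Gamma^c)=G(\Gamma)$ and no induced $K_3\sqcup K_1$, hence $K_{1,3}\not\leq\Gamma$; for $C_3=K_3$ one takes $\Gamma=C_4$, which is triangle-free while $G(K_3)=F_3\hookrightarrow\mathbb{Z}^2*\mathbb{Z}^2=G(C_4)$ (co-contract $2K_2$); for $C_4$, since $C_4^c=2K_2$, one decontracts $2K_2$ to a $2K_2$-free graph; and a general $C_n$ is treated by decontracting a suitable vertex of $C_n^c$ so as to destroy all of its induced copies. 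For (b) one glues a witness $\Gamma_0$ for $\Lambda_0$ to $\Lambda\setminus\Lambda_0$ along the common $\Lambda_0$, in the pattern dictated by the inclusion $\Lambda_0\leq\Lambda$, choosing the remaining edges so that no induced copy of $\Lambda$ is created, and obtains $G(\Lambda)\hookrightarrow G(\Gamma)$ by combining the co-contraction for $\Lambda_0$ with the gluing. The main obstacle in part (2) is precisely the persistence lemma (b): the gluing must be arranged so that $G(\Lambda)$ remains embedded while every induced copy of $\Lambda$ is killed, and checking the latter is the delicate combinatorial point.
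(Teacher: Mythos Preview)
Both parts leave the acknowledged ``main obstacle'' unresolved, and in each case that obstacle is essentially the whole proof.

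For part (1), your induction locates \emph{some} full copy of $\Lambda_0=\Lambda\setminus\{u\}$ in $\Gamma$, but nothing ties this copy to $\phi$, so there is no reason a suitable extension vertex $w$ exists; you propose to strengthen the inductive hypothesis to ``track how the standard generators sit relative to the located subgraph,'' but you neither state the strengthened hypothesis nor prove the strengthened step, and this is exactly where the linear-forest hypothesis must do its work. The paper takes a completely different route and never argues by induction on $|V(\Lambda)|$: it quotes Casals-Ruiz's theorem that, for $\Lambda^c$ the complement of a forest, $A(\Lambda^c)\hookrightarrow A(\Gamma^c)$ implies $\Lambda^c\leq(\Gamma^c)^e$; it then uses Kim--Koberda's description of the extension graph as an increasing union of iterated star-doubles of $\Gamma^c$; and finally it proves the purely combinatorial descent lemma that if $\Lambda$ is a linear forest and $\Lambda^c\leq D_v(\Gamma^c)$ then already $\Lambda^c\leq\Gamma^c$. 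The linear-forest hypothesis is used only in this last lemma (one shows that outside $\mathrm{St}(v)$ the copy of $\Lambda^c$ meets the two halves of the double in at most a $P_2^c$ or $P_3^c$, and then replaces those stray vertices inside a single copy of $\Gamma^c$).

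For part (2), your persistence lemma (b) is ill-posed as written: you propose to glue $\Gamma_0$ to $\Lambda\setminus\Lambda_0$ ``along the common $\Lambda_0$,'' but by hypothesis $\Lambda_0\not\leq\Gamma_0$, so there is no such common copy to glue along. Even if one reinterprets the construction as performing on $\Lambda^c$ the same de-contraction that witnessed failure for $\Lambda_0^c$, the conclusion $\Lambda\not\leq\Gamma$ depends on the global structure of $\Lambda$ and does not follow from $\Lambda_0\not\leq\Gamma_0$; and your treatment of general $C_n$ in (a) is only a one-line assertion. The paper does not reduce to forbidden subgraphs at all. It works directly with $\Lambda$, splitting on $\deg_{\max}(\Lambda)$: for $\deg_{\max}\leq 2$ (so some component is a cycle) it takes $\Gamma$ to be a disjoint union of sufficiently long cycles; for $\deg_{\max}\geq 3$ it introduces two local one-vertex modifications $\Lambda_u^{\pm}$ at a chosen vertex $u$ (inserting a new degree-$3$ vertex $v$), proves $G(\Lambda)\hookrightarrow G(\Lambda_u^{\pm})$ via a single star-double on the complement, and then shows $\Lambda\not\leq\Lambda_u^{\pm}$ by counting vertices of a prescribed degree-and-link type. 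That last count is the real work and itself requires a further case split ($\deg_{\max}\geq 4$; $\deg_{\max}=3$ with every degree-$3$ link edgeless; $\deg_{\max}=3$ with some degree-$3$ link containing an edge).
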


Theorem \ref{Main-theorem}(1) generalises the following well-known fact: for any finite graph $\Gamma$, $ \mathbb{Z}^n = A(K_n) \hookrightarrow A(\Gamma)$ implies $K_n \leq \Gamma$ (see e.g.~\cite{Charney-Vogtmann}). 
In terms of the opposite convention it says that, for any finite graph $\Gamma$, $G(K_n^c) \hookrightarrow G(\Gamma)$ implies $K_n^c \leq \Gamma$. 
Hence, $K_n^c$ has our property $(*)$ and the graph $K_n^c$ is in fact a linear forest ($n$ isolated vertices). 
Theorem \ref{Main-theorem}(1) also generalises the  result of Kim-Koberda \cite{Kim-Koberda-1}, which states that the linear forests  $P_3^c = P_1 \sqcup P_2$ (the symbol $\sqcup$ means the disjoint union), $P_4^c = P_4$ and $C_4^c = P_2 \sqcup P_2$ have property $(*)$. 

As a consequence of Theorem \ref{Main-theorem}(1) and a result of Kim \cite{Kim}, we obtain the following result concerning embeddability between RAAGs on finite graphs whose underlying spaces are connected $1$-manifolds. 

\begin{theorem}
Let $m$ and $n$ be positive integers. 
Then the following hold. 
\begin{enumerate}
 \item[(1)] $G(P_m) \hookrightarrow G(P_n)$ if and only if $m \leq n$. 
 \item[(2)] $G(C_m) \hookrightarrow G(C_n)$ if and only if $m \leq n$. 
 \item[(3)] $G(P_m) \hookrightarrow G(C_n)$ if and only if $m+1 \leq n$. 
\end{enumerate}
For embedding $G(C_m)$ into $G(P_n)$, we have the following. 
\begin{enumerate}
 \item[(4-1)] $G(C_3) \hookrightarrow G(P_n)$ if and only if $2 \leq n$. 
 \item[(4-2)] $G(C_4) \hookrightarrow G(P_n)$ if and only if $3 \leq n$. 
 \item[(4-3)] Suppose that $5 \leq m$. 
 If $G(C_m) \hookrightarrow G(P_n)$, then $m - 1 \leq n$. 
\end{enumerate}
\label{Anti-lines-anti-cycles}
\end{theorem}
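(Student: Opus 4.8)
The plan is to assemble Theorem~\ref{Anti-lines-anti-cycles} from four ingredients: the implication $\Lambda \le \Gamma \Rightarrow G(\Lambda) \hookrightarrow G(\Gamma)$ recalled in the introduction; Theorem~\ref{Main-theorem}(1), which supplies the converse whenever the ``source'' graph is a linear forest; a handful of elementary observations about how paths and cycles sit inside one another as full subgraphs; and Kim's theorem from \cite{Kim} that a co-contraction induces an embedding of right-angled Artin groups, applied here to produce $G(C_m) \hookrightarrow G(C_{m+1})$. The relevant full-subgraph facts, all immediate from the definitions, are: $P_k \le P_n$ if and only if $k \le n$; $P_k \le C_n$ if and only if $k \le n-1$ (delete one vertex of $C_n$ to see $P_{n-1} \le C_n$, and note that any proper full subgraph of a cycle is a linear forest); $P_{m-1} \le C_m$; and $C_k \le C_n$ if and only if $k = n$ (this last one explains why part (2) is not a formal triviality). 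I also record the identifications, obtained by passing to complement graphs, $G(P_1) = \mathbb{Z}$, $G(P_2) = F_2$, $G(P_3) = \mathbb{Z}^2 * \mathbb{Z}$, $G(C_3) = A(\overline{K_3}) = F_3$ and $G(C_4) = A(P_2 \sqcup P_2) = \mathbb{Z}^2 * \mathbb{Z}^2$.

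For the ``only if'' directions I would first isolate the following mechanism: if $G(X) \hookrightarrow G(Y)$ and $X$ contains $P_k$ as a full subgraph, then $G(P_k) \hookrightarrow G(X) \hookrightarrow G(Y)$, so, since $P_k$ is a linear forest, Theorem~\ref{Main-theorem}(1) yields $P_k \le Y$. Feeding in $X = P_m$, $k = m$ and $Y = P_n$ gives $m \le n$, which is part (1); with $Y = C_n$ it gives $m \le n-1$, i.e.\ $m+1 \le n$, which is part (3). Taking $X = C_m$ and $k = m-1$ (using $P_{m-1} \le C_m$) with $Y = C_n$ gives $m-1 \le n-1$, i.e.\ $m \le n$, which is the ``only if'' half of (2); with $Y = P_n$ it gives $m-1 \le n$, which is (4-3), and specialising to $m = 3, 4$ gives the ``only if'' halves of (4-1) and (4-2). (Alternatively the latter two follow at once from $F_3 \not\hookrightarrow \mathbb{Z} = G(P_1)$ and $\mathbb{Z}^2 * \mathbb{Z}^2 \not\hookrightarrow F_2 = G(P_2)$.)

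For the ``if'' directions, (1) is immediate from $P_m \le P_n$ and the general implication, and (3) is immediate from $P_m \le P_{n-1} \le C_n$. For (2) with $m < n$ I would invoke Kim's co-contraction theorem: contracting an edge of the cycle $C_{m+1}$ produces $C_m$, hence $C_m^c$ is a co-contraction of $C_{m+1}^c$, so $G(C_m) = A(C_m^c) \hookrightarrow A(C_{m+1}^c) = G(C_{m+1})$, and composing these embeddings for $m, m+1, \dots, n-1$ gives $G(C_m) \hookrightarrow G(C_n)$. For (4-1) with $n \ge 2$ I would use $G(C_3) = F_3 \hookrightarrow F_2 = G(P_2) \hookrightarrow G(P_n)$, the middle embedding being the classical $F_3 \hookrightarrow F_2$ and the last one coming from $P_2 \le P_n$. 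For (4-2) with $n \ge 3$ I would use $G(C_4) = \mathbb{Z}^2 * \mathbb{Z}^2 \hookrightarrow \mathbb{Z}^2 * \mathbb{Z} = G(P_3) \hookrightarrow G(P_n)$, where the first map sends one $\mathbb{Z}^2$ factor onto the $\mathbb{Z}^2$ vertex group of $\mathbb{Z}^2 * \mathbb{Z}$ and the other onto its conjugate by the generator of the $\mathbb{Z}$ factor (two distinct conjugates of a free factor generate their free product), and the last embedding again comes from $P_3 \le P_n$.

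I expect there to be no serious obstacle: the genuinely non-formal inputs are Theorem~\ref{Main-theorem}(1), which carries all the weight on the necessity side, and Kim's theorem, which is needed only for the sufficiency half of (2); the remaining points requiring any care are the bookkeeping of complement graphs in the small cases $C_3, C_4$ and the verification that the two stated free-product embeddings are genuine, both routine. I would also flag that the asymmetry in part (4) is not an artefact of the method: for $m \ge 5$ the sufficiency question whether $G(C_m) \hookrightarrow G(P_{m-1})$ is addressed neither by Theorem~\ref{Main-theorem}(1) nor by Kim's construction, and so lies outside the scope of this argument.
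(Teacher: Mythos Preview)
Your proposal is correct and follows essentially the same route as the paper: Theorem~\ref{Main-theorem}(1) via the auxiliary path $P_{m-1}\le C_m$ handles all the ``only if'' directions, and the small cases (4-1), (4-2) are done by the same free-product identifications. The only cosmetic differences are that for the ``if'' direction of (2) you cite Kim's co-contraction theorem directly, whereas the paper reproves it through its subdivision lemma (Lemma~\ref{Complementary-subdivision}), and for the ``if'' direction of (4-2) you give the elementary conjugate-free-factor argument in $\mathbb{Z}^2*\mathbb{Z}$, which is exactly the embedding the paper obtains via the double $D_u(P_3^c)$.
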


The ``only if" parts of Theorem \ref{Anti-lines-anti-cycles}(1), (2), (3), (4-1), (4-2) and (4-3) are direct consequences of Theorem \ref{Main-theorem}(1). 
The ``if" part of Theorem \ref{Anti-lines-anti-cycles}(2) is nothing other than the result of Kim \cite[Corollary 4.3]{Kim}. 
We will give a proof to the result by using subdivision technique (Lemma \ref{Complementary-subdivision}). 
\begin{remark}
Theorem \ref{Anti-lines-anti-cycles}(4-3) is not best possible. 
In fact, the result of C. Droms \cite[Theorem 1]{Droms} implies that $G(C_5)$ cannot be embedded into $G(P_4)$ though $(m,n)=(5,4)$ satisfies the inequality. 
Moreover, E. Lee and S. Lee \cite{Lee-Lee} proved that $G(C_m) \hookrightarrow G(P_n)$ if $2m-2 \leq n$. 
\end{remark}

We note that Theorem \ref{Main-theorem}(1) has an application to the existence of embeddings of RAAGs into mapping class groups. 
Let $\Sigma_{g,n}$ be the orientable surface of genus $g$ with $n$ punctures and $\mathcal{C}(\Sigma_{g,n})$ the curve graph on $\Sigma_{g,n}$. 
The mapping class group of $\Sigma_{g,n}$ is defined by 
$$\mathcal{M}(\Sigma_{g,n}) = \pi_0(\mathrm{Homeo}^{+} (\Sigma_{g,n})),$$ 
namely, $\mathcal{M}(\Sigma_{g,n})$ is the group of orientation-preserving self-homeomorphisms of $\Sigma_{g,n}$ which preserve the set of punctures, up to isotopy. 
Assume that $\chi(\Sigma_{g,n}) = 2 -2g - n  < 0$. 
Under this setting, Koberda \cite{Koberda} proved that if $\Lambda$ is a finite full subgraph of $\mathcal{C}(\Sigma_{g,n})$, then $A(\Lambda) \hookrightarrow \mathcal{M}(\Sigma_{g,n})$. 
Regarding this result, Kim-Koberda proposed the following. 

\begin{question}\cite[Question 1.1]{Kim-Koberda-3}
Is there an algorithm to decide whether there exists an embedding of a given RAAG into the mapping class group of a given compact orientable surface?
\label{RAAG_Map}
\end{question}

Motivated by this question, we deduce a partial converse of Koberda's embedding theorem above, as an immediate consequence of Theorem \ref{Main-theorem}(1)  in this paper. 
\begin{corollary}
Suppose that $\Lambda$ is the complement of a linear forest. 
Then $A(\Lambda) \hookrightarrow \mathcal{M}(\Sigma_{g,n})$ implies $\Lambda \leq \mathcal{C}(\Sigma_{g,n})$. 
\label{mapping-class-groups}
\end{corollary}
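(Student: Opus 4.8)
The plan is to transport the statement into the world of Theorem~\ref{Main-theorem}(1). Writing $\Delta := \Lambda^c$, the hypothesis says that $\Delta$ is a linear forest, and by the very definition of the notation $G(\cdot)$ we have $G(\Delta) = A(\Delta^c) = A(\Lambda)$; thus the assumption $A(\Lambda)\hookrightarrow\mathcal{M}(\Sigma_{g,n})$ is the same as $G(\Delta)\hookrightarrow\mathcal{M}(\Sigma_{g,n})$, and the desired conclusion is $\Lambda=\Delta^c\leq\mathcal{C}(\Sigma_{g,n})$. I would first isolate the elementary bookkeeping fact that complementation is an involution on finite full subgraphs: if $W\subseteq V(\Theta)$, then the full subgraph of $\Theta^c$ induced by $W$ is the complement of the full subgraph of $\Theta$ induced by $W$, so for finite graphs one has $\Theta_0\leq\Theta$ if and only if $\Theta_0^c\leq\Theta^c$. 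Granting this, it suffices to exhibit a finite graph $\Gamma$ with (i)~$G(\Delta)\hookrightarrow G(\Gamma)$ and (ii)~$\Gamma^c\leq\mathcal{C}(\Sigma_{g,n})$: for then Theorem~\ref{Main-theorem}(1), applicable because $\Delta$ is a linear forest, gives $\Delta\leq\Gamma$, hence $\Lambda=\Delta^c\leq\Gamma^c$ by the bookkeeping fact, and finally $\Lambda\leq\mathcal{C}(\Sigma_{g,n})$ by (ii) and transitivity of ``being a full subgraph''.

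The real content is therefore the construction of such a $\Gamma$, which amounts to the assertion that a right-angled Artin group embedding into $\mathcal{M}(\Sigma_{g,n})$ already embeds into $A(\Xi)$ for some finite full subgraph $\Xi$ of the curve graph $\mathcal{C}(\Sigma_{g,n})$ (and then $\Gamma:=\Xi^c$, since $A(\Xi)=G(\Xi^c)$). To produce this I would invoke the theory of right-angled Artin subgroups of mapping class groups due to Koberda~\cite{Koberda} and Kim--Koberda~\cite{Kim-Koberda-3} --- the mapping class group analogue of the extension-graph and clique-graph recognition results recalled in Section~\ref{intro}. Concretely: starting from an embedding $\phi\colon G(\Delta)\hookrightarrow\mathcal{M}(\Sigma_{g,n})$, one replaces the images of the standard generators by sufficiently high powers so that each becomes a pure mapping class with connected support --- a power of a Dehn twist about a simple closed curve, or a partial pseudo-Anosov supported on an essential connected subsurface --- arranged so that two of them commute exactly when their supports are disjoint; one records the supports by simple closed curves (core or boundary curves) and lets $\Xi$ be the full subgraph of $\mathcal{C}(\Sigma_{g,n})$ they span. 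A ping-pong argument of the type used in Koberda's embedding theorem then shows that the chosen powers generate a copy of $G(\Delta)$ and that this copy is carried into $A(\Xi)$; here one also uses the standard fact that the subgroup of a right-angled Artin group generated by powers of the standard generators is again isomorphic to that right-angled Artin group.

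I expect the main obstacle to lie entirely in this bridging step, and more precisely in two points. The first is \emph{promoting} an arbitrary embedding to one whose generators have connected support: a high power of a reducible mapping class need not have connected support, so this calls for the machinery of canonical reduction systems and subsurface projections in order to disentangle the active subsurfaces of the various generators. The second is ensuring that the graph one ultimately reads off really is a full subgraph of the \emph{curve} graph --- rather than of some coarser ``subsurface incidence'' graph in which overlapping or nested subsurfaces create spurious adjacencies --- since adding edges corresponds to adding relations and would destroy injectivity; it is here that one exploits the homogeneity of $\mathcal{C}(\Sigma_{g,n})$ under the $\mathcal{M}(\Sigma_{g,n})$-action, exactly as the extension graph is used in the right-angled Artin group setting recalled in Section~\ref{intro}. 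Once the bridge is in place, the corollary follows from the two-line argument of the first paragraph.
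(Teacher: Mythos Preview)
Your approach is correct and is essentially the same as the paper's: both arguments reduce to producing a finite full subgraph $X\leq\mathcal{C}(\Sigma_{g,n})$ with $A(\Lambda)\hookrightarrow A(X)$, and then apply Theorem~\ref{Main-theorem}(1) (equivalently, Theorem~\ref{main-theorem-2}) to conclude $\Lambda\leq X\leq\mathcal{C}(\Sigma_{g,n})$. The only difference is packaging: the paper black-boxes the bridging step by citing the proof of \cite[Lemma~2.3]{Kim-Koberda-3}, whereas you sketch it and flag the technical issues (disconnected supports, reading off a genuine full subgraph of the curve graph); those issues are real but already handled in the cited literature, so you need not re-prove them.
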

Corollary \ref{mapping-class-groups} is a generalisation of the following well-known result due to J. Birman, A. Lubotzky and J. McCarthy \cite[Theorem A]{Birman-Lubotzky-McCarthy}: 
if $\mathbb{Z}^m \hookrightarrow \mathcal{M}(\Sigma_{g,n})$, then $m$ does not exceed the maximum order (defined in the next paragraph) of the complete subgraph of $\mathcal{C}(\Sigma_{g,n})$. 
After completing the first draft of this paper,  Koberda
informed the author of the recent paper \cite{Bering-Conant-Gaster} by E. Bering IV, G. Conant, J. Gaster, which gives another combinatorial test for embedding RAAGs into mapping class groups. 
In the final section of this paper, we discuss relation among the result in \cite{Bering-Conant-Gaster}, a result due to Kim-Koberda \cite{Kim-Koberda-3} and Corollary \ref{mapping-class-groups}. 

In Section \ref{plus-minus-technique} of this paper, we refine an embedding  theorem established by Kim-Koberda \cite[Theorem 1.1]{Kim-Koberda-2}, which states that, for any finite graph $\Gamma$, there exists a finite tree $T$ such that $G(\Gamma) \hookrightarrow G(T)$.  
In order to state and explain our refinement, we recall some standard terminology of graph theory. 
For a graph $\Gamma$, the {\it order}, $|\Gamma|$, of $\Gamma$ is the number of the vertices of $\Gamma$.  
The {\it degree} of a vertex $v$ of a graph $\Gamma$, $\mathrm{deg}(v, \Gamma)$, is the number of the edges of $\Gamma$, incident with $v$. 
The {\it maximum degree} of (the vertices in) $\Gamma$ is denoted by $\mathrm{deg}_{\mathrm{max}}(\Gamma)$. 

\begin{theorem}
For each finite graph $\Lambda$, there exists a finite tree $T$ such that $G(\Lambda) \hookrightarrow G(T)$ and $\mathrm{deg}_{\mathrm{max}}(T)\leq 3$. 
\label{Anti-trees-deg3}
\end{theorem}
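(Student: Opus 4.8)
The plan is to start from the theorem of Kim--Koberda \cite{Kim-Koberda-2} cited in the excerpt: for the given finite graph $\Lambda$ there is a finite tree $T_0$ with $G(\Lambda) \hookrightarrow G(T_0)$. The remaining task is purely graph-theoretic on the side of trees: produce a tree $T$ with $\mathrm{deg}_{\mathrm{max}}(T) \leq 3$ together with an embedding $G(T_0) \hookrightarrow G(T)$, and then compose. So the core of the argument is a local ``vertex-splitting'' move that reduces the maximum degree of a tree by one while only enlarging the associated RAAG $G(\cdot)$ (equivalently, $A(T^c)$) up to subgroup. Recall that $G(T) = A(T^c)$, and passing to the complement turns ``replace a vertex by a path'' into a complementary operation on $T^c$; this is exactly the kind of situation the paper's Lemma \ref{Complementary-subdivision} (``complementary subdivision'') is designed to handle, so I would lean on that lemma as the engine.

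The key steps, in order. First, fix a vertex $w$ of $T_0$ with $\mathrm{deg}(w, T_0) = d \geq 4$, and let $w_1, \dots, w_d$ be its neighbours, each heading a subtree $B_1, \dots, B_d$. Replace $w$ by a path $w', w''$ (an edge $[w', w'']$), attaching $B_1, \dots, B_{d-1}$ (via $w_1, \dots, w_{d-1}$) to $w'$ and $B_d$ (via $w_d$) to $w''$; call the new tree $T_1$. Then $w'$ has degree $d-1$ and $w''$ has degree $2$ in $T_1$, while all other degrees are unchanged, so $\mathrm{deg}_{\mathrm{max}}(T_1) \le \max(\mathrm{deg}_{\mathrm{max}}(T_0), d-1)$ and, more importantly, the number of vertices of degree $\ge d$ has strictly dropped (or the max degree itself dropped). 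Second, establish $G(T_0) \hookrightarrow G(T_1)$: on the complement side this is the statement that $A(T_0^c)$ embeds into $A(T_1^c)$, and I expect to get it from Lemma \ref{Complementary-subdivision} applied at the vertex $w$ — splitting $w$ into $w', w''$ on the $T$-side corresponds on the $T^c$-side to the controlled subdivision-type modification the lemma provides an embedding for. Third, iterate: at each stage either $\mathrm{deg}_{\mathrm{max}}$ decreases or the number of maximum-degree vertices decreases, so after finitely many steps we reach a tree $T$ with $\mathrm{deg}_{\mathrm{max}}(T) \le 3$; chaining the embeddings gives $G(\Lambda) \hookrightarrow G(T_0) \hookrightarrow G(T_1) \hookrightarrow \cdots \hookrightarrow G(T)$.

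I would be careful about two points. One: the split must be carried out so that $T_1$ is genuinely a tree (no cycles created, still connected) — this is automatic with the description above since we only subdivide the ``star'' around $w$ by inserting one edge. Two: I should check that the termination measure really is monotone; the cleanest choice is the lexicographic pair $(\mathrm{deg}_{\mathrm{max}}(T), \#\{v : \deg(v,T) = \mathrm{deg}_{\mathrm{max}}(T)\})$, which strictly decreases at every split because $w$ leaves the top-degree class and the two new vertices have degree $d-1 < d$ and $2 < d$.

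The main obstacle I anticipate is the second step — verifying that splitting a vertex of a tree into an edge induces an embedding of the corresponding $G(\cdot)$'s. It is intuitively plausible (the complement of $T_1$ contains the complement of $T_0$ in a structured way, with one vertex ``doubled''), but making it rigorous requires either a direct appeal to Lemma \ref{Complementary-subdivision} with the hypotheses checked, or an explicit homomorphism $A(T_0^c) \to A(T_1^c)$ sending the doubled vertex $w$ to (a conjugate-controlled word in) $w'$ and $w''$ and proving injectivity, e.g.\ via the standard retraction/normal-form arguments for RAAGs. If Lemma \ref{Complementary-subdivision} as stated does not literally cover the vertex-split, the fallback is to realize one vertex-split as a composition of an ordinary edge subdivision (covered by the complementary-subdivision lemma) together with an elementary ``sliding'' embedding that redistributes the pendant subtrees, each of which is a soft move on RAAGs.
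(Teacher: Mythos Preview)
Your overall strategy matches the paper's: start from the Kim--Koberda tree $T_0$ and iteratively split high-degree vertices until the maximum degree drops to $3$. The paper formalises the vertex-split as the $(-)$-construction (Definition~\ref{plus-minus}), proves the embedding $G(T)\hookrightarrow G(T_u^-)$ in Lemma~\ref{Heteromorphic_lemma} via the doubling lemma (Theorem~\ref{Kim-Koberda}(1)), and packages the iteration as Proposition~\ref{Anti-degree-3}.

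There are, however, two concrete problems with your write-up. First, your degree count is off: if $w'$ is adjacent to $w_1,\dots,w_{d-1}$ \emph{and} to $w''$, then $\deg(w',T_1)=d$, not $d-1$, so your termination measure does not decrease. You must hand \emph{two} of the $w_i$ to $w''$ (so $\deg w'' = 3$ and $\deg w' = d-1$); this is exactly the paper's $(-)$-construction at $u=w$.

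Second, and more seriously, the embedding step cannot be obtained from Lemma~\ref{Complementary-subdivision}. A subdivision inserts a degree-$2$ vertex on an existing edge and leaves every other degree unchanged, so it can never lower $\deg_{\max}$; conversely, the degree-reducing split you need produces a degree-$3$ vertex and genuinely alters the link of $w$, and on the complement side this is \emph{not} a subdivision of $T_0^c$. Your proposed fallback (``subdivision plus sliding'') does not escape the difficulty: the ``sliding'' move --- deleting $[w',w_{d-1}]$ and adding $[w'',w_{d-1}]$ --- is precisely the nontrivial step, and it is not a soft RAAG fact. The paper supplies exactly this missing piece in Lemma~\ref{Heteromorphic_lemma}, by exhibiting $T^c$ as a full subgraph of the double $D_v\bigl((T_u^-)^c\bigr)$ along the star of the new vertex $v$ and then invoking Theorem~\ref{Kim-Koberda}(1). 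That lemma, not the subdivision lemma, is the engine you need.
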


\begin{remark}
Here, we give some comments on Theorem \ref{Anti-trees-deg3}. 
\begin{enumerate}
 \item[(1)] In the assertion of Theorem \ref{Anti-trees-deg3}, we have $$|T| \leq 
 \begin{cases}
    |\Lambda| \cdot 2^{|\Lambda|} -4 & (\mbox{if }\Lambda \mbox{ is not a tree}) \\
   2|\Lambda| - 4 & (\mbox{if }\Lambda \mbox{ is a tree of maximum degree} >3).  
\end{cases}
$$ 
 \item[(2)] The embedding in the assertion of Theorem \ref{Anti-trees-deg3} can be realised as a quasi-isometric embedding of the Cayley graph of $G(\Lambda)$ into the Cayley graph of $G(T)$. 
 \item[(3)] I. Agol \cite[Theorem 1.1]{Agol} and D. Wise \cite[Theorem 14.29]{Wise} proved that, for any finite volume hyperbolic 3-manifold $M$, there exists a finite graph $\Gamma$ such that the fundamental group $\pi_1(M)$ is virtually embedded into $G(\Gamma)$. 
 Thus, together with Theorem \ref{Anti-trees-deg3}, this implies that for any finite volume hyperbolic 3-manifold $M$, there exists a finite tree $T$ such that $\mathrm{deg}_{\mathrm{max}}(T) \leq 3$ and that $\pi_1(M)$ is virtually embedded into $G(T)$. 
\end{enumerate}
\label{Remark-anti-trees}
\end{remark}

This paper is organised as follows. 
In Section \ref{Preliminaries}, we recall some facts on embeddings between RAAGs. 
We prove Theorems \ref{Main-theorem}(1), \ref{Anti-lines-anti-cycles} and Corollary \ref{mapping-class-groups} in Section \ref{Obstruction}. 
Theorems \ref{Main-theorem}(2) and \ref{Anti-trees-deg3} are proved in Section \ref{plus-minus-technique}. 
Lastly, we discuss in Section \ref{final} a question related to Theorem \ref{Anti-trees-deg3} and relation between Corollary \ref{mapping-class-groups} and recent results in \cite{Bering-Conant-Gaster} and \cite{Kim-Koberda-3}.

\section{Preliminaries \label{Preliminaries}}
In this section we recall some terminology of graph theory and some results on embeddings between RAAGs. 
Let $\Gamma$ be a finite graph. 
\begin{enumerate}
 \item[$\bullet$] The {\it extension graph} $\Gamma^e$ is the graph whose vertex set consists of the words in $A(\Gamma)$ that are conjugate to the vertices of $\Gamma$, and two vertices of $\Gamma^e$ are adjacent if and only if those two vertices commute as words in $A(\Gamma)$. 
 \item[$\bullet$] The {\it link} of the vertex $v$, $\mathrm{Lk}(v, \Gamma)$, is the full subgraph of $\Gamma$ whose vertex set consists of all of the vertices adjacent to $v$. 
Obviously, $|\mathrm{Lk}(v, \Gamma)|= \mathrm{deg}(v, \Gamma)$. 
 \item[$\bullet$] The {\it star} of $v$, $\mathrm{St}(v, \Gamma)$, is the full subgraph of $\Gamma$, whose vertex set consists of $v$ and $V(\mathrm{Lk}(v, \Gamma))$. 
 \item[$\bullet$] The {\it double} of $\Gamma$ along $\mathrm{St}(v, \Gamma)$, $D_{v}(\Gamma)$, is the graph obtained from the disjoint union $\Gamma \sqcup \Gamma'$ by identifying $\mathrm{St}(v, \Gamma)$ and its copy $\mathrm{St}(v', \Gamma') (\leq \Gamma')$, where $\Gamma'$ is a copy of $\Gamma$ and $v'$ is the copy corresponding to $v$ in $\Gamma'$. 
 Obviously, $\Gamma \leq D_{v}(\Gamma)$ and $\mathrm{St}(v, \Gamma) = \mathrm{St}(v', \Gamma') = \mathrm{St}(v, D_{v}(\Gamma))$ hold. 
 Besides, we often denote the double $D_{v}(\Gamma)$ by $\Gamma \cup_{\mathrm{St}(v, \Gamma)} \Gamma'$. 
\end{enumerate}

The following theorems play important roles in the proofs of the main results. 
First of all, the following Theorem \ref{Kim-Koberda} is fundamental for studying embeddings between RAAGs. 

\begin{theorem}[{\cite{Kim-Koberda-1}}]
Let $\Lambda$ and $\Gamma$ be finite graphs. 
\begin{enumerate}
 \item[(1)] We have $A(D_{v}(\Gamma)) \hookrightarrow A(\Gamma)$. 
 \item[(2)] Suppose that $\Lambda$ is a finite full subgraph of the extension graph $\Gamma^e$ of $\Gamma$. 
 Then there exists a finite increasing sequence of full subgraphs of $\Gamma^e$,
 $$\Gamma = \Gamma_0 \leq \Gamma_1 \leq \Gamma_2 \leq \cdots \leq \Gamma_n \leq \Gamma^e ,$$ 
 such that
\begin{enumerate}
 \item[$\bullet$] $\Gamma_i$ is the double of $\Gamma_{i-1}$ along the star of a vertex of $\Gamma_{i-1}$. 
 \item[$\bullet$] $\Lambda \leq \Gamma_n$. 
\end{enumerate}
\end{enumerate}
\label{Kim-Koberda}
\end{theorem}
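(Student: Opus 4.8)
The plan is to treat the two parts in turn; part (1) is exactly what gets iterated along the chain in part (2).

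\textbf{Part (1).} First I would note that, since $\mathrm{St}(v,\Gamma)$ is a full subgraph of each of the two copies of $\Gamma$ making up $D_v(\Gamma)$, the RAAG construction turns the graph amalgam into a group amalgam,
\[
A(D_v(\Gamma)) \;\cong\; A(\Gamma) \ast_{A(\mathrm{St}(v,\Gamma))} A(\Gamma'), \qquad A(\mathrm{St}(v,\Gamma)) = \langle v \rangle \times A(\mathrm{Lk}(v,\Gamma)).
\]
I would then write down an explicit $\phi \colon A(D_v(\Gamma)) \to A(\Gamma)$: on the first factor let $\phi$ be the endomorphism of $A(\Gamma)$ sending $v \mapsto v^{2}$ and fixing every other standard generator, and on the second factor let $\phi$ send $v' \mapsto v^{2}$, each $u'$ with $u \in V(\mathrm{Lk}(v,\Gamma))$ to $u$, and each remaining $w'$ to $vwv^{-1}$. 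Each of these respects the defining relations (conjugation by $v$ is an automorphism, and a standard generator commutes with $v^{2}$ if and only if it commutes with $v$), and the two restrictions agree on $A(\mathrm{St}(v,\Gamma))$ since $v$ is central there, so $\phi$ is well defined on the amalgam. The content is injectivity of $\phi$. For this I would use the standard HNN description $A(\Gamma) = \langle A(\Gamma \setminus v),\, v \mid [v,c]=1 \ (c \in A(\mathrm{Lk}(v,\Gamma))) \rangle$ with stable letter $v$ (here $\Gamma\setminus v$ is the full subgraph obtained by deleting $v$), push a reduced word of the amalgam forward by $\phi$, and check that the output is Britton-reduced — the squared occurrences of $v$ coming from the first factor and the conjugators $v,v^{-1}$ coming from the second factor cannot pinch one another away. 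Making this cancellation bookkeeping precise, equivalently running a ping-pong argument for $\langle \phi(A(\Gamma)),\phi(A(\Gamma'))\rangle$ acting on the Bass--Serre tree of this splitting, is the main obstacle in Part (1).

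\textbf{Part (2), base case.} I would first show that $D_v(\Gamma)$ is a full subgraph of $\Gamma^{e}$, realized by keeping $V(\Gamma) \subseteq V(\Gamma^{e})$ and sending the copied vertex $w'$ (for $w \in V(\Gamma) \setminus V(\mathrm{St}(v,\Gamma))$) to $vwv^{-1}$, which is conjugate to $w$ and hence a vertex of $\Gamma^{e}$. Distinctness: conjugation is injective, and $vwv^{-1}$ cannot be a standard generator $u$, for otherwise $u$ and $w$ would be conjugate standard generators (impossible by abelianization), forcing $w = u$ and then $[v,w]=1$, contrary to $w \notin V(\mathrm{St}(v,\Gamma))$. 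Fullness: using the standard fact that $C_{A(\Gamma)}(w) = \langle w \rangle \times A(\mathrm{Lk}(w,\Gamma))$ for a standard generator $w$, one computes $[vw_{1}v^{-1}, vw_{2}v^{-1}] = 1 \iff [w_{1},w_{2}] = 1$, $[u, vwv^{-1}] = 1 \iff [u,w] = 1$ for $u \in V(\mathrm{St}(v,\Gamma))$, and $[u, vwv^{-1}] \neq 1$ for $u,w \in V(\Gamma) \setminus V(\mathrm{St}(v,\Gamma))$; these are exactly the adjacencies of $D_v(\Gamma)$.

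\textbf{Part (2), iteration and conclusion.} The chain is then built by iterating this abstractly. Fix a schedule that doubles along the star of (the current image of) each standard generator infinitely often, let $\Gamma = \Gamma_{0} \leq \Gamma_{1} \leq \Gamma_{2} \leq \cdots$ be the resulting chain of iterated doubles (so each $\Gamma_{i+1}$ is a double of $\Gamma_{i}$ along the star of a vertex, by construction), and equip each $\Gamma_{i}$ with the map to $\Gamma^{e}$ obtained by sending, at every doubling step, the copied vertices to their conjugates as in the base case. I would introduce the length $\ell(x)$ of a vertex $x=gug^{-1}$ of $\Gamma^{e}$ as the minimal length of such a conjugator $g$, and prove by induction on $\ell$ that these maps $\Gamma_{i}\to\Gamma^{e}$ are full embeddings whose union exhausts $V(\Gamma^{e})$: writing $g = sg'$ reduced, the vertex $y := g'ug'^{-1}$ has $\ell(y)<\ell(x)$, hence lies in some $\Gamma_{m}$; if $[y,s]=1$ then $x=y\in\Gamma_{m}$, and otherwise a later doubling along $\mathrm{St}(s,\cdot)$ produces $x$ as the $s$-conjugate of $y$ (a parallel argument handles edges). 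The delicate point, and where I expect the real work in Part (2) to lie, is precisely the injectivity and fullness of these maps $\Gamma_{i}\to\Gamma^{e}$ — that at each stage the newly created conjugates are genuinely new vertices with the prescribed adjacencies and no spurious coincidences — which is exactly what the length function is designed to control. Finally, since $\Lambda$ is finite there is a single $n$ with $V(\Lambda)\cup E(\Lambda)\subseteq\Gamma_{n}$, and as $\Lambda$ and $\Gamma_{n}$ are both full in $\Gamma^{e}$ this gives $\Lambda\leq\Gamma_{n}$, as desired.
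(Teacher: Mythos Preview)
The paper does not prove this theorem: it appears in the Preliminaries as a citation of \cite{Kim-Koberda-1} and is used as a black box throughout. There is no in-paper argument to compare your attempt against.

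Your outline is nonetheless a correct sketch of the standard proof. In Part~(1) the map $\phi$ you write down has image exactly the index-$2$ kernel of $A(\Gamma)\to\mathbb Z/2\mathbb Z$ sending $v\mapsto 1$ and the remaining generators to $0$; recognising that kernel as $A(D_v(\Gamma))$ via Reidemeister--Schreier (equivalently, via the double cover of the Salvetti complex) is a tidy alternative to the Britton bookkeeping you propose, and bypasses the cancellation analysis you flag as the main obstacle. In Part~(2), the only place where ``one computes'' hides an extra ingredient is the non-commutation $[u,vwv^{-1}]\neq 1$ for $u,w\notin V(\mathrm{St}(v,\Gamma))$: beyond the centraliser formula you also need the retraction $A(\Gamma)\to A(\mathrm{St}(w,\Gamma))$, which kills $v$ (since $w\notin\mathrm{St}(v)$ forces $v\notin\mathrm{St}(w)$) and hence forces $v^{-1}uv$ to equal either $1$ or $u$, contradicting $u\notin V(\mathrm{St}(v,\Gamma))$ in either case. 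The length-of-conjugator induction exhausting $\Gamma^e$ by iterated doubles is the standard way to finish, and you have correctly located the genuine work in the fullness and injectivity of the maps $\Gamma_i\to\Gamma^e$.
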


Casals-Ruiz reduced the embedding problem for certain RAAGs to a graph theoretical problem: 

\begin{theorem}[{\cite[Theorem 3.14]{Casals-Ruiz}}]
Suppose that $\Lambda$ is the complement of a forest and $\Gamma$ is a finite graph. 
Then $A(\Lambda) \hookrightarrow A(\Gamma)$ implies $\Lambda \leq \Gamma^e$. 
\label{Casals-Ruiz}
\end{theorem}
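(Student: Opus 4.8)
The plan is to begin from the converse half of the Kim-Koberda correspondence recalled in the introduction: an embedding $A(\Lambda)\hookrightarrow A(\Gamma)$ always yields a full-subgraph embedding of $\Lambda$ into the \emph{clique graph} of the extension graph $\Gamma^{e}$ --- the graph whose vertices are the cliques (complete subgraphs) of $\Gamma^{e}$ and in which two cliques are adjacent exactly when their union is again a clique \cite{Kim-Koberda-1}. Applied to $A(F^{c})\hookrightarrow A(\Gamma)$, this gives, for every vertex $v$ of $\Lambda=F^{c}$, a clique $C_{v}$ of $\Delta:=\Gamma^{e}$, with $C_{u}\neq C_{v}$ for $u\neq v$, such that $C_{u}\cup C_{v}$ is a clique of $\Delta$ if and only if $u$ and $v$ are adjacent in $\Lambda$. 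Rewriting non-adjacency in $\Lambda$ as adjacency in the forest $F$: $uv\in E(F)$ precisely when there exist $a\in C_{u}\setminus C_{v}$ and $b\in C_{v}\setminus C_{u}$ that are non-adjacent in $\Delta$ --- call such a pair a \emph{witness pair} for the edge $uv\in E(F)$. The goal is then to collapse each clique $C_{v}$ to a suitably chosen single vertex $x_{v}\in C_{v}$ so that $v\mapsto x_{v}$ becomes a full-subgraph embedding $\Lambda\hookrightarrow\Delta=\Gamma^{e}$, which is exactly the asserted $\Lambda\leq\Gamma^{e}$.

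The only genuine constraints on the $x_{v}$ come from the edges of $F$: across an edge of $F^{c}$ the union $C_{u}\cup C_{v}$ is a clique, so any two distinct choices $x_{u}\in C_{u}$, $x_{v}\in C_{v}$ are automatically adjacent in $\Delta$; across an edge $uv\in E(F)$ we must instead pick $x_{u},x_{v}$ non-adjacent, and these choices must be made compatibly at each vertex of $F$ while keeping all the $x_{v}$ pairwise distinct. This is the point at which I would use that $F$ is a \emph{forest}. The auxiliary tool is the self-similarity of the extension graph \cite{Kim-Koberda-1}: $\Gamma^{e}$ contains the double of any of its finite full subgraphs along the star of any vertex, so by a sequence of such doublings one may enlarge the ambient full subgraph of $\Gamma^{e}$ under consideration, creating fresh copies of chosen vertices while preserving the cliques $C_{v}$ and all the union-relations that encode $\Lambda$. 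After rooting each component of $F$, I would build the $x_{v}$ by induction from the roots downward: having fixed the representative of the $F$-parent of $v$, choose $x_{v}\in C_{v}$ non-adjacent to it via the witness pair for that edge, first performing a doubling, if needed, to separate the cliques sitting at the children of $v$ so that the single vertex $x_{v}$ can simultaneously serve as a valid parent-representative for all of them. Acyclicity of $F$ is what guarantees that this descent never returns to a vertex to impose a second, incompatible demand.

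The step I expect to be the genuine obstacle is precisely this coordinated selection: simultaneously securing non-adjacency across every edge of $F$, adjacency across every edge of $F^{c}$, and injectivity of $v\mapsto x_{v}$. Pushing the induction through will require scheduling the doublings carefully --- so that copies created to handle one edge of $F$ do not destroy the witnesses needed at an adjacent edge, and so that the representatives at a vertex of large $F$-degree remain distinct --- and it will probably need more than the bare fact that a forest has a leaf, for instance that the $C_{v}$ may be taken pairwise incomparable, or a bound on how many mutually independent doublings suffice. One should also verify at the outset that the full-subgraph embedding into the clique graph of $\Gamma^{e}$ supplied by \cite{Kim-Koberda-1} is indeed induced, as used above. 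Once established, this theorem serves as the analytic input to Theorem \ref{Main-theorem}(1), the remaining, linear-forest-specific work being the purely graph-theoretic statement that a linear forest occurring as a full subgraph of $((\Gamma^{c})^{e})^{c}$ must already occur in $\Gamma$.
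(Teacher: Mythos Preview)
The paper does not supply a proof of this statement: Theorem~\ref{Casals-Ruiz} is quoted in the Preliminaries as a result of Casals-Ruiz \cite[Theorem~3.14]{Casals-Ruiz} and is used as a black box in the proof of Theorem~\ref{main-theorem-2}. There is therefore nothing in the present paper to compare your argument against.

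As for the proposal itself, the strategy---start from the Kim--Koberda embedding of $\Lambda$ into the clique graph of $\Gamma^{e}$ and then collapse each clique $C_{v}$ to a single representative $x_{v}$, exploiting that $F=\Lambda^{c}$ is a forest to run an induction from the roots---is natural, and you correctly identify where the real difficulty lies. But the gap you flag is genuine and is not clearly repaired by the doubling manoeuvre you sketch. In your downward induction, having fixed the representative $x_{u}$ at the parent $u$, you need some $x_{v}\in C_{v}$ that is \emph{non-adjacent to $x_{u}$}; the witness pair $(a,b)$ for the edge $uv\in E(F)$ only gives you $a\in C_{u}$, $b\in C_{v}$ with $a\not\sim b$, and there is no reason $a=x_{u}$. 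Doubling $\Gamma^{e}$ along stars produces new copies of vertices, but it does not let you swap $a$ for the already-committed $x_{u}$ while preserving the non-adjacency to $b$, so as written the induction step does not go through. You also need to secure injectivity of $v\mapsto x_{v}$ across non-$F$-edges (where $C_{u}\cup C_{v}$ is a clique and the two cliques may well intersect), and this is a separate constraint that your outline does not address. The proof in \cite{Casals-Ruiz} proceeds by different, more algebraic methods rather than by a clique-collapsing induction of this kind.
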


The following is a refinement (due to Lee-Lee) of the embedding theorem of Kim-Koberda \cite[Theorem 3.5]{Kim-Koberda-2}. 

\begin{theorem}[{\cite[Corollary 3.11]{Lee-Lee}}]
Let $\Lambda$ be a finite connected graph and $\widetilde{\Lambda}$ be a universal cover of $\Lambda$. 
Then there exists a finite tree $T \leq \widetilde{\Lambda}$ such that $G(\Lambda) \hookrightarrow A(T)$ and $|T| \leq |\Lambda| \cdot 2^{(|\Lambda| -1)}$. 
\label{Anti-trees-theorem}
\end{theorem}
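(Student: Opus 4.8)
The plan is to route everything through extension graphs and the doubling machinery of Kim--Koberda (Theorem~\ref{Kim-Koberda}). Since $G(\Lambda)=A(\Lambda^c)$, it suffices to produce a finite tree $T$, realised as a full subgraph of $\widetilde{\Lambda}$, whose extension graph $T^e$ contains $\Lambda^c$ as a full subgraph. Indeed, once $\Lambda^c\leq T^e$, Theorem~\ref{Kim-Koberda}(2) supplies a finite tower of doublings $T=T_0\leq T_1\leq\cdots\leq T_n\leq T^e$ with $\Lambda^c\leq T_n$, and Theorem~\ref{Kim-Koberda}(1), applied at each stage and combined with the standard fact that the RAAG on a full subgraph embeds into the ambient RAAG, gives
$$A(\Lambda^c)\hookrightarrow A(T_n)\hookrightarrow A(T_{n-1})\hookrightarrow\cdots\hookrightarrow A(T_0)=A(T).$$
So the problem splits into (i) choosing a suitable finite subtree $T\leq\widetilde{\Lambda}$ with the claimed size bound, and (ii) proving $\Lambda^c\leq T^e$ for that $T$.

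For (i), I would fix a vertex $*$ of $\Lambda$, a lift $\widetilde{*}\in\widetilde{\Lambda}$, and a spanning tree $S$ of $\Lambda$, and let $T$ be the smallest subtree of $\widetilde{\Lambda}$ containing $\widetilde{*}$ together with the endpoints of all lifts based at $\widetilde{*}$ of reduced edge-paths in $\Lambda$ of length below an appropriate bound (slightly past the diameter of $\Lambda$, hence below $|\Lambda|$, will do). Being a subgraph of the tree $\widetilde{\Lambda}$, the graph $T$ is itself a tree; the covering projection $p\colon\widetilde{\Lambda}\to\Lambda$ restricts to a surjection of $T$ onto all vertices and edges of $\Lambda$, since $\Lambda$ is connected; and $T$ is assembled from finitely many lifts of $S$ glued along lifts of the non-tree edges of $\Lambda$. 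A bookkeeping of how many lifts are needed --- roughly one lifted copy of the $|\Lambda|$-vertex tree $S$ for each of the $2^{|\Lambda|-1}$ subsets of the edge set of $S$ --- yields $|T|\leq|\Lambda|\cdot 2^{|\Lambda|-1}$.

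Step (ii) is the substance. One must exhibit $\Lambda^c$ inside $T^e$: for the vertices $v_1,\dots,v_n$ of $\Lambda$, pairwise distinct conjugates $w_1 u_1 w_1^{-1},\dots,w_n u_n w_n^{-1}$ of vertices $u_i\in V(T)$ in $A(T)$ such that $w_i u_i w_i^{-1}$ and $w_j u_j w_j^{-1}$ commute if and only if $[v_i,v_j]\notin E(\Lambda)$, so that the full subgraph of $T^e$ spanned by them is exactly $\Lambda^c$. The natural candidates take $u_i$ to be a lift of $v_i$ and $w_i$ to be a word read off a path in $T$, adjusted via $p$ so that adjacencies match up; commutation of conjugates of generators in $A(T)$ is governed by the star structure of $T$, and the point of passing to the universal cover is to ``unwind'' $\Lambda$ far enough that the chosen conjugates fail to commute along the edges of $\Lambda$ (these are the non-edges of $\Lambda^c$) while still commuting along the non-edges of $\Lambda$ (the edges of $\Lambda^c$). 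When $\Lambda$ happens to be a forest, Theorem~\ref{Casals-Ruiz} makes $\Lambda^c\leq T^e$ and $A(\Lambda^c)\hookrightarrow A(T)$ equivalent, which is a convenient sanity check on the shape of the argument.

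The main obstacle I expect is fullness in step (ii), not the mere existence of commuting conjugates: a tree already carries a profusion of commuting conjugates of its vertices, so one must use the geometry of $\widetilde{\Lambda}$ --- and, correspondingly, take the path-lengths in the definition of $T$ large enough --- to rule out any unwanted commutation, which would merge a non-edge of $\Lambda^c$ into an edge. This is where a careful normal-form and path-lifting analysis in $A(T)$ seems unavoidable. Once fullness is secured, the reduction in the first paragraph together with the count in the second completes the proof.
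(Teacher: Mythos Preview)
This theorem is not proved in the paper: it is quoted from Lee--Lee \cite{Lee-Lee} as a tool in the preliminaries, so there is no ``paper's own proof'' to compare against. More importantly, the statement as printed contains a typo that your proposal inherits. Everywhere else in the paper (the description of Kim--Koberda's result in the introduction, and the proof of Theorem~\ref{Anti-trees-deg3}) the conclusion is $G(\Lambda)\hookrightarrow G(T)$, i.e.\ $A(\Lambda^c)\hookrightarrow A(T^c)$, not $G(\Lambda)\hookrightarrow A(T)$. With $A(T)$ the assertion is actually false: take $\Lambda=P_3$, so $\widetilde{\Lambda}=\Lambda=P_3$ and every admissible $T$ is a subtree of $P_3$; but $G(P_3)=A(P_1\sqcup P_2)$, and since $P_1\sqcup P_2$ is the complement of the linear forest $P_3$, Theorem~\ref{main-theorem-2} gives $A(P_1\sqcup P_2)\hookrightarrow A(P_3)\Rightarrow P_1\sqcup P_2\leq P_3$, which fails. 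Hence no subtree $T\leq P_3$ satisfies $G(P_3)\hookrightarrow A(T)$.

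Your plan is aimed squarely at the typo'd target: you try to realise $\Lambda^c$ as a full subgraph of $T^e$ (the extension graph of $T$, governing embeddings into $A(T)$), with $u_i$ a lift of $v_i$ in $T\leq\widetilde{\Lambda}$. But lifts that are adjacent in $T$ project to vertices adjacent in $\Lambda$, so the ``local'' commutation in $A(T)$ tracks edges of $\Lambda$, whereas you need commutation exactly along the \emph{non}-edges of $\Lambda$. The fullness obstacle you anticipate is therefore not a technicality but a genuine obstruction: for $\Lambda=P_3$ the desired full subgraph simply does not exist in $T^e$ for any $T\leq\widetilde{\Lambda}$. If you redirect the argument to the correct target $G(T)=A(T^c)$, the reduction via Theorem~\ref{Kim-Koberda} becomes ``show $\Lambda^c\leq (T^c)^e$'', and the choice of $T$ inside $\widetilde{\Lambda}$ together with the size bound has to be rethought accordingly; the Lee--Lee argument does this via explicit path--lifting constructions rather than a bare extension--graph embedding, and your ``one lift of $S$ per subset of $E(S)$'' heuristic for the bound $|\Lambda|\cdot 2^{|\Lambda|-1}$ would need to be replaced by their actual bookkeeping.
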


\section{Proof of Theorem \ref{Main-theorem}(1): an obstruction theorem on embeddings between right-angled Artin groups \label{Obstruction}}

In this section, we prove Theorem \ref{Main-theorem}(1) and some consequences of this result, Theorem \ref{Anti-lines-anti-cycles} and Corollary \ref{mapping-class-groups}. 

We frequently consider a given graph and its complement at the same time. 
The following lemma is obvious from the definitions of the complement of a graph and the double of a graph and the uniqueness of a full subgraph. 

\begin{lemma}
Let $\Lambda$ and $\Gamma$ be finite graphs and $v$ a vertex of $\Gamma$. 
\begin{enumerate}
 \item[(1)] $\Lambda \leq \Gamma$ if and only if $\Lambda^c \leq \Gamma^c$. 
 \item[(2)] In the double $D_v(\Gamma)= \Gamma \cup_{\mathrm{St}(v, \Gamma)} \Gamma'$ of $\Gamma$, the following hold. 
 For each $u \in V(\Gamma) \setminus V(\mathrm{St}(v, \Gamma))$ and $w' \in V(\Gamma') \setminus V(\mathrm{St}(v', \Gamma'))$, $u$ and $w'$ span an edge in the complement of $\Gamma \cup_{\mathrm{St}(v, \Gamma)} \Gamma'$, but do not in the original graph $\Gamma \cup_{\mathrm{St}(v, \Gamma)} \Gamma'$. 
  \item[(3)] Let $V'$ be a subset of $V(\Lambda)$. 
  If $\Lambda \leq \Gamma$, then $\Lambda[V'] = \Gamma[V']$, where $\Lambda[V']$ (resp. $\Gamma[V']$) denotes the full subgraph of $\Lambda$ (resp. $\Gamma$) induced by $V'$. 
\end{enumerate}
\label{Obvious}
\end{lemma}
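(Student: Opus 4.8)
The plan is to unwind the definitions in each part; the only thing that requires attention is the bookkeeping involved in identifying a full subgraph with its vertex set and keeping track of the realising injection.

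For (1), I would fix an isomorphism $\phi$ from $\Lambda$ onto a full subgraph $\Gamma[V']$ of $\Gamma$, with $V' \subseteq V(\Gamma)$. Because $\Gamma[V']$ is full, it retains exactly those edges of $\Gamma$ with both endpoints in $V'$, so for $a,b \in V(\Lambda)$ one has $[a,b] \in E(\Lambda)$ if and only if $[\phi(a),\phi(b)] \in E(\Gamma)$. Negating both sides gives $[a,b] \in E(\Lambda^c)$ if and only if $[\phi(a),\phi(b)] \in E(\Gamma^c)$, which says precisely that $\phi$ realises $\Lambda^c$ as the full subgraph $\Gamma^c[V']$ of $\Gamma^c$; hence $\Lambda^c \leq \Gamma^c$. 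The converse is the same argument applied to $\Gamma^c$ in place of $\Gamma$, using $(\Gamma^c)^c = \Gamma$.

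For (2), I would simply recall that $D_v(\Gamma) = \Gamma \cup_{\mathrm{St}(v,\Gamma)} \Gamma'$ is obtained from the disjoint union $\Gamma \sqcup \Gamma'$ by gluing along $\mathrm{St}(v,\Gamma) = \mathrm{St}(v',\Gamma')$, so that its edge set is $E(\Gamma) \cup E(\Gamma')$ under this identification; in particular no edge of $D_v(\Gamma)$ joins a vertex $u \in V(\Gamma)\setminus V(\mathrm{St}(v,\Gamma))$ to a vertex $w' \in V(\Gamma')\setminus V(\mathrm{St}(v',\Gamma'))$, since $u$ belongs only to the $\Gamma$-side and $w'$ only to the $\Gamma'$-side. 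Hence $[u,w'] \notin E(D_v(\Gamma))$, i.e.\ $[u,w'] \in E(D_v(\Gamma)^c)$, which is exactly the claim. For (3), with $\phi\colon \Lambda \to \Gamma[V']$ as above and $W \subseteq V(\Lambda)$, I would verify that $\phi|_W$ is an isomorphism from $\Lambda[W]$ onto $\Gamma[\phi(W)]$: for $a,b \in W$ the chain $[a,b]\in E(\Lambda[W]) \Leftrightarrow [a,b]\in E(\Lambda) \Leftrightarrow [\phi(a),\phi(b)]\in E(\Gamma) \Leftrightarrow [\phi(a),\phi(b)]\in E(\Gamma[\phi(W)])$ holds, the outer equivalences by fullness of $\Lambda[W]$ and $\Gamma[\phi(W)]$ and the middle one by the choice of $\phi$. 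Under the notational convention identifying a subset of $V(\Lambda)$ with its image in $V(\Gamma)$, this reads $\Lambda[V'] = \Gamma[V']$.

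I do not expect a genuine obstacle: each of the three assertions is a direct consequence of the definitions of complement, of the double along a star, and of a full (induced) subgraph, together with the uniqueness of a full subgraph on a given vertex set. The only point to be careful about is not to conflate the abstract graph $\Lambda$ with its realisation inside $\Gamma$ — that is, to keep the fixed embedding $\phi$ in hand when passing back and forth between $\Lambda$, $\Lambda^c$, and their full subgraphs.
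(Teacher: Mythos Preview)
Your proposal is correct and matches the paper's approach: the paper does not supply a proof at all, simply remarking that the lemma ``is obvious from the definitions of the complement of a graph and the double of a graph and the uniqueness of a full subgraph.'' You have spelled out precisely these routine verifications, with the appropriate care about the realising embedding $\phi$.
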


The following lemma is used in the proof of Theorem \ref{Main-theorem}(1). 
The {\it join} $\Lambda_1 * \Lambda_2 * \cdots * \Lambda_m$ of finite graphs $\Lambda_1, \Lambda_2, \ldots, \Lambda_m$ is the finite graph obtained from the disjoint union $\Lambda_1 \sqcup \Lambda_2 \sqcup \cdots \sqcup \Lambda_m$ by adding all of the edges of the form $[ v_i , v_j ]$ to $\Lambda_1 \sqcup \Lambda_2 \sqcup \cdots \sqcup \Lambda_m$ for all $v_i \in \Lambda_i, \ v_j \in \Lambda_j \ (i \neq j)$. 
We call each $\Lambda_i$ a {\it join-component} of $\Lambda_1 * \Lambda_2 * \cdots * \Lambda_m$. 

\begin{lemma}
Suppose that $\Lambda$ is the complement of a linear forest, $\Gamma$ is a finite graph and $v$ is a vertex of $\Gamma$. 
Then $\Lambda \leq D_{v}(\Gamma)$ implies $\Lambda \leq \Gamma$. 
\label{Extension}
\end{lemma}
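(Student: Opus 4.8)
The plan is to analyse how a given full copy of $\Lambda$ in $D_v(\Gamma)$ meets the two halves of the double, and to exploit the rigidity of linear forests. Write $\Lambda = F^c$ with $F$ a linear forest, so $\Lambda = P_{n_1}^c * \cdots * P_{n_m}^c$ is a join of anti-paths; I will use Lemma \ref{Obvious}(1),(3) freely. Fix $W \subseteq V(D_v(\Gamma))$ with $D_v(\Gamma)[W] \cong \Lambda$ (and identify $W$ with $V(\Lambda)$ accordingly), write $D_v(\Gamma) = \Gamma \cup_{\mathrm{St}(v,\Gamma)} \Gamma'$, and partition $V(D_v(\Gamma)) = A \sqcup B \sqcup A'$ with $B = V(\mathrm{St}(v,\Gamma))$, $A = V(\Gamma)\setminus B$, $A' = V(\Gamma')\setminus B$; put $W_A = W \cap A$, $W_B = W\cap B$, $W_{A'}=W\cap A'$. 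By Lemma \ref{Obvious}(2) there are no edges of $D_v(\Gamma)$ between $A$ and $A'$, so every vertex of $W_A$ is non-adjacent in $\Lambda$ to every vertex of $W_{A'}$. If $W_A = \emptyset$ or $W_{A'}=\emptyset$ then $W$ is contained in one copy of $\Gamma$, and $\Lambda \le \Gamma$ follows from Lemma \ref{Obvious}(3); so from now on both are nonempty.

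The crucial combinatorial point is that $W_A \cup W_{A'}$ spans a complete bipartite graph in the linear forest $F = \Lambda^c$, and $K_{a,b}$ occurs as a subgraph of a linear forest only when $\min(a,b)=1$ and $\max(a,b)\le 2$. Interchanging the two copies of $\Gamma$ if needed, this forces $W_{A'} = \{y\}$ with $1 \le |W_A| \le 2$; moreover non-adjacency with $y$ pins $W_A \cup W_{A'}$ inside a single join-component $\Lambda_0 = P^c$ (write $P = u_1 - u_2 - \cdots - u_n$ for its defining path), forming either one edge of $P$ (when $|W_A|=1$) or a two-edge subpath of $P$ with middle vertex $y$ (when $|W_A|=2$). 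Since $W = V(\Lambda)$, every other vertex of $\Lambda_0$, and all of the remaining join-components, lie in $W_B \subseteq B$; it is also a routine check that $v \notin W$.

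The easy cases are those in which $y$ is non-adjacent in $\Lambda$ only to vertices of $W_A$, i.e.\ $|W_A| = 2$, or $|W_A|=1$ and $y$ is an endpoint of $P$: then $y$ is adjacent to all of $W_B$, so the doubling vertex $v$ --- adjacent to all of $B\setminus\{v\}\supseteq W_B$, non-adjacent to everything in $A$, and outside $W$ --- may be substituted for $y$, and one verifies directly that $(W\setminus\{y\})\cup\{v\}\subseteq V(\Gamma)$ induces $\Lambda$. (If $|W_A|=1$ but it is the vertex of $W_A$ that is the endpoint of $P$, run the symmetric argument inside $\Gamma'$.) What remains --- and what I expect to be the main obstacle --- is the case $|W_A|=1$, say $W_A = \{x\}$ with $x = u_j$ and $y = u_{j+1}$, where both $x$ and $y$ are interior vertices of $P$.

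Here I would let $\hat y := \phi^{-1}(y) \in A$ be the vertex of $\Gamma$ that $y$ copies under the gluing isomorphism $\phi\colon \Gamma \to \Gamma'$; since $\phi$ is the identity on $B$, the vertices $\hat y$ and $y$ have the same neighbours in $B$ (in particular in $W_B$), and $\hat y \ne x$ because $x$ and $y$ have different links in $W_B$ (the vertices $u_{j-1}$ and $u_{j+2}$ witness this). If $x \not\sim_\Gamma \hat y$, then $W_B \cup \{x,\hat y\} \subseteq V(\Gamma)$ induces $\Lambda$ with $\hat y$ playing the role of $y$: the only adjacency whose status could have changed is the pair $x\hat y$, and it is correctly a non-edge. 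The genuinely delicate sub-case is $x \sim_\Gamma \hat y$; there I would build the copy of $\Lambda$ in $\Gamma$ by the ``shift'' that fixes all vertices of the join-components other than $\Lambda_0$, sends $u_k \mapsto u_k$ for $k \le j$ (so $u_j \mapsto x$), $u_{j+1}\mapsto v$, $u_{j+2}\mapsto \hat y$, and $u_k \mapsto u_{k-1}$ for $k \ge j+3$. Verifying that this is an isomorphism onto the full subgraph it spans is the technical heart of the argument; the one point worth flagging is that the edge $u_j u_{j+2}$ of $P^c$ is precisely the one realised by the hypothesis $x\sim_\Gamma\hat y$, while $v$ (adjacent to all of $W_B$, non-adjacent to $A$) carries the link of $u_{j+1}$. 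Finally, Lemma \ref{Extension} is exactly what is needed to descend a Kim--Koberda doubling tower: combined with Theorem \ref{Casals-Ruiz} (giving $\Lambda \le \Gamma^e$) and Theorem \ref{Kim-Koberda}(2), iterating it yields Theorem \ref{Main-theorem}(1).
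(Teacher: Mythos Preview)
Your proof is correct and follows essentially the same approach as the paper: the same reduction to a single join-component carrying $W_A\cup W_{A'}$, the same bound $|W_A\cup W_{A'}|\le 3$, the same observation $v\notin W$, and the same substitution of $v$ or the mirror vertex $\hat y$ in the various cases. The only cosmetic difference is in the last sub-case ($x\sim_\Gamma\hat y$): the paper inserts both $v$ and $\hat y$ to obtain a copy of $P_{n+1}^c$ in $\Gamma$ and then passes to the full subgraph $P_n^c$, whereas your ``shift'' drops the endpoint $u_n$ up front and lands directly on $P_n^c$ --- these are the same construction, viewed from the two ends.
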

\begin{proof}
In this proof, we denote $D_v(\Gamma)= \Gamma \cup_{\mathrm{St}(v, \Gamma)} \Gamma'$ by $D$ for simplicity. 
By the assumption, $\Lambda^c = \sqcup_{i=1}^{m} P_{n_i}$ and so $\Lambda = *_{i=1}^{m} P_{n_i}^c$. 
By setting $\Lambda_i = P_{n_i}^c$, we have $\Lambda = *_{i=1}^{m} \Lambda_i$. 

We now suppose $\Lambda \leq D = \Gamma \cup_{\mathrm{St}(v, \Gamma)} \Gamma'$. 
If $V(\Lambda)$ is contained in either $V(\Gamma)$ or its copy $V(\Gamma')$ in $D$, then we immediately obtain the desired result, so we assume that $V(\Lambda)$ is contained in neither $V(\Gamma)$ nor $V(\Gamma')$. 

\begin{claim}
After changing of indices, we may assume that $$V(\Lambda) \setminus V(\mathrm{St}(v, D)) = V(\Lambda_1) \setminus V(\mathrm{St}(v, D)).$$ 
\label{The-join-component}
\end{claim}
\begin{proof}[Proof of Claim \ref{The-join-component}.] 
By the assumption, there are vertices $u_1, w_1' \in V(\Lambda)$ such that $u_1 \in V(\Gamma)) \setminus V(\mathrm{St}(v, \Gamma))$ and $w_1' \in V(\Gamma') \setminus V(\mathrm{St}(v', \Gamma'))$. 
Then, by Lemma \ref{Obvious}(2), $u_1$ and $w_1'$ do not span an edge in $D$. 
Since $\Lambda$ is a subgraph of $D$, $u_1$ and $w_1'$ do not span an edge in $\Lambda$. 
This shows that $u_1$ and $w_1'$ are contained in the same join-component of $\Lambda$, say $\Lambda_1$. 
Pick any vertex $x$ of $V(\Lambda) \setminus V(\mathrm{St}(v, D))$. 
If $x \in V(\Gamma)$, then by the above argument, we see that $x$ and $w_1'$ belong to the same join-component of $\Lambda$, and so $x \in V(\Lambda_1)$. 
Similarly, if $x \in V(\Gamma')$, then again we have $x \in V(\Lambda_1)$. 
Thus we obtain $V(\Lambda) \setminus V(\mathrm{St}(v, D)) \subset V(\Lambda_1) \setminus V(\mathrm{St}(v, D))$. 
Since the converse inclusion is obvious, we obtain the desired result. 
\end{proof}

Let $\check{\Lambda}_1$ be the full subgraph $\Lambda[ V(\Lambda_1) \setminus V(\mathrm{St}(v, D))]$ of $\Lambda \leq D$. 

\begin{claim}
The full subgraph $\check{\Lambda}_1$ is ismorphic to either $P_2^c$ or $P_3^c$. 
\label{P3}
\end{claim}
\begin{proof}[Proof of Claim \ref{P3}.]
We first show that $|\check{\Lambda}_1| \leq 3$. 
Suppose to the contrary that $|\check{\Lambda}_1| \geq 4$. 
Then, since each of $\Gamma$ and $\Gamma'$ contains a vertex of $\check{\Lambda}_1$, we can find four vertices of $\check{\Lambda}_1$ as in Figure \ref{a}, which imply that $\Lambda_1^c$ has either $C_4$ (as a subgraph) or a vertex of degree $\geq 3$. 
\begin{figure}
\centering
\includegraphics[scale=0.24,clip]{./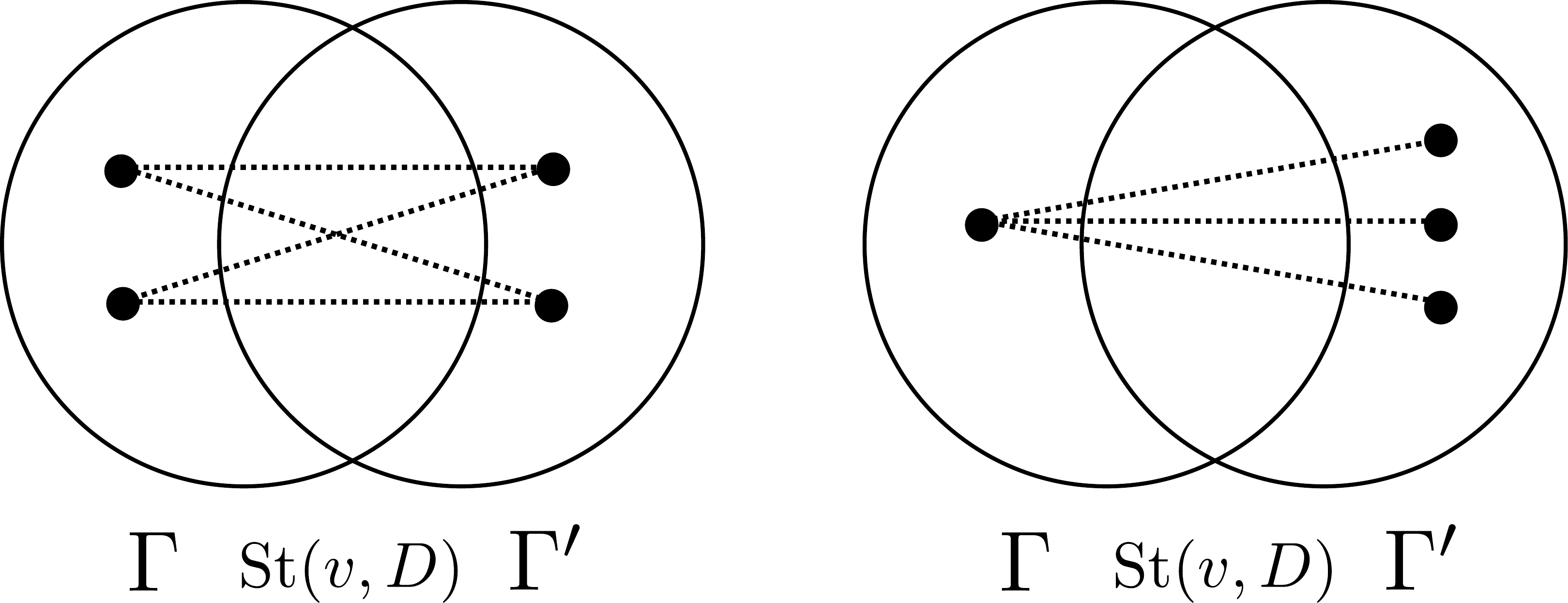}
\caption{
In this schematic figure of $D = \Gamma \cup_{\mathrm{St}(v, D)} \Gamma'$, dotted lines represent edges of the complement $D^c$. 
The left picture illustrates a (possibly non-induced) cycle of length $4$ in the complement of the double $D$. 
The right picture illustrates a vertex of degree $\geq 3$ in the complement of the double $D$.}
\label{a}
\end{figure}
This contradicts to the assumption that $\Lambda_1^c \cong P_{n_1}^c$ is a path graph. 
Thus we have $2 \leq |\check{\Lambda}_1| \leq 3$. 

Suppose first that $|\check{\Lambda}_1| = 2$. 
By Lemma \ref{Obvious}(2), the two vertices of $\check{\Lambda}_1$ do not span an edge in $\check{\Lambda}_1$. 
Hence, $\check{\Lambda}_1 \cong P_2^c$. 

Suppose next that $|\check{\Lambda}_1| = 3$. 
Then, by Lemma \ref{Obvious}(2), $\check{\Lambda}_1$ contains at most one edge. 
Hence, $\check{\Lambda}_1$ is isomorphic to either $P_3^c$ or $K_3^c = C_3^c$. 
However, $\check{\Lambda}_1 \cong C_3^c$ implies $C_3^c \leq \Lambda_1 = P_{n_1}^c$ and so $C_3 \leq P_{n_1}$ by Lemma \ref{Obvious}(1), which is impossible. 
Thus $\check{\Lambda}_1$ must be isomorphic to $P_3^c$. 
\end{proof}

\begin{claim}
The graph $\Lambda$ does not contain the vertex $v$. 
\label{Nocontain}
\end{claim}
\begin{proof}[Proof of Claim \ref{Nocontain}.]
Suppose to the contrary that $\Lambda$ contains $v$. 
Pick vertices $ u, w' \in V(\check{\Lambda}_1)$ so that $u$ and $w'$ satisfy $u \in V(\Gamma) \setminus V(\mathrm{St}(v, \Gamma))$ and $w' \in V(\Gamma') \setminus V(\mathrm{St}(v', \Gamma'))$. 
Then by the definition of the star, $[v, u ]$ and $[v, w' ]$ are edges in $D^c$. 
Moreover, $[ u, w' ]$ is an edge in $D^c$. 
Hence, $v, u, w'$ induces a full subgraph isomorphic to $C_3$ in $D^c$. 
On the other hand, we have $\Lambda^c \leq D^c$ by the assumption that $\Lambda \leq D$ and Lemma \ref{Obvious}(1). 
Thus $C_3$ is a full subgraph of the linear forest $\Lambda^c \cong \sqcup_{i=1}^{m} P_{n_i}$, a contradiction. 
\end{proof}

Before proceeding the proof of Lemma \ref{Extension},  we now summarize the situation. 
By Claim \ref{P3}, almost all part of $\Lambda = *_{i=1}^m \Lambda_i$ is contained in $\mathrm{St}(v, D) = \mathrm{St}(v, \Gamma) = \mathrm{St}(v, \Gamma')$. 
To be more precise, though the small full subgraph $\check{\Lambda}_1$ of $\Lambda_1$ is not contained in $\mathrm{St}(v, \Gamma)$, the remaining vertices of  $V(\Lambda_1)$ and the all of the remaining join-components $\Lambda_i \ (2 \leq i \leq m)$ and so $*_{i=2}^m \Lambda_i$ are contained in $\mathrm{St}(v, \Gamma)$. 

In the remainder of the proof, we find a full subgraph $\Lambda_1'$ of $\Gamma$ or $\Gamma'$ satisfying the following conditions. 
\begin{enumerate}
 \item[(a)] $\Lambda_1'$ is a full subgraph of $\Gamma$ or $\Gamma'$ isomorphic to either $P_{n_1}^c$ or $P_{n_1 + 1}^c$. 
 \item[(b)] $\Lambda_1'$ is ``joinable" with $*_{i=2}^m \Lambda_i$ in $\Gamma$ or $\Gamma'$, 
 and $\tilde{\Lambda}:= \Lambda_1' * (*_{i=2}^m \Lambda_i)$ is a full subgraph of $\Gamma$ or $\Gamma'$. 
\end{enumerate}
Note that the condition (a) is equivalent to the following condition. 
\begin{enumerate}
 \item[(a$'$)] $(\Lambda_1')^c$ is a full subgraph of $(\Gamma)^c$ or $(\Gamma')^c$ isomorphic to either $P_{n_1}$ or $P_{n+1}$. 
\end{enumerate}
If we prove (a$'$) and (b), then we obtain the desired result, because the original graph $\Lambda$ is either isomorphic to $\tilde{\Lambda}$ or a full subgraph of $\tilde{\Lambda}$. 
We divide the proof into two cases according to whether $\check{\Lambda}_1$ is isomorphic to $P_2^c$ or $P_3^c$. 

{\bf Case 1} $\check{\Lambda}_1 \cong P_3^c$. 
We label the vertex set $V(\check{\Lambda}_1)$ by $\{ v_1 , v_2, v_3 \}$ so that $v_1$ and $v_3$ span an edge in $\check{\Lambda}_1 \leq D$. 
Then $[v_1, v_2 ]$ and $[v_2, v_3 ]$ are edges of $(\check{\Lambda}_1)^c$. 
Hence, we may assume that $v_1, v_3 \in V(\Gamma) \setminus V(\mathrm{St}(v, \Gamma))$ and $v_2 \in V(\Gamma') \setminus V(\mathrm{St}(v', \Gamma'))$. 
Let $\Lambda_1' = \Gamma[(V(\Lambda_1) \setminus \{ v_2 \}) \sqcup \{ v \}]$ be the full subgraph of $\Gamma$ induced by $(V(\Lambda_1) \setminus \{ v_2 \}) \sqcup \{ v \}$. 
\begin{figure}
\centering
\includegraphics[scale=0.24,clip]{./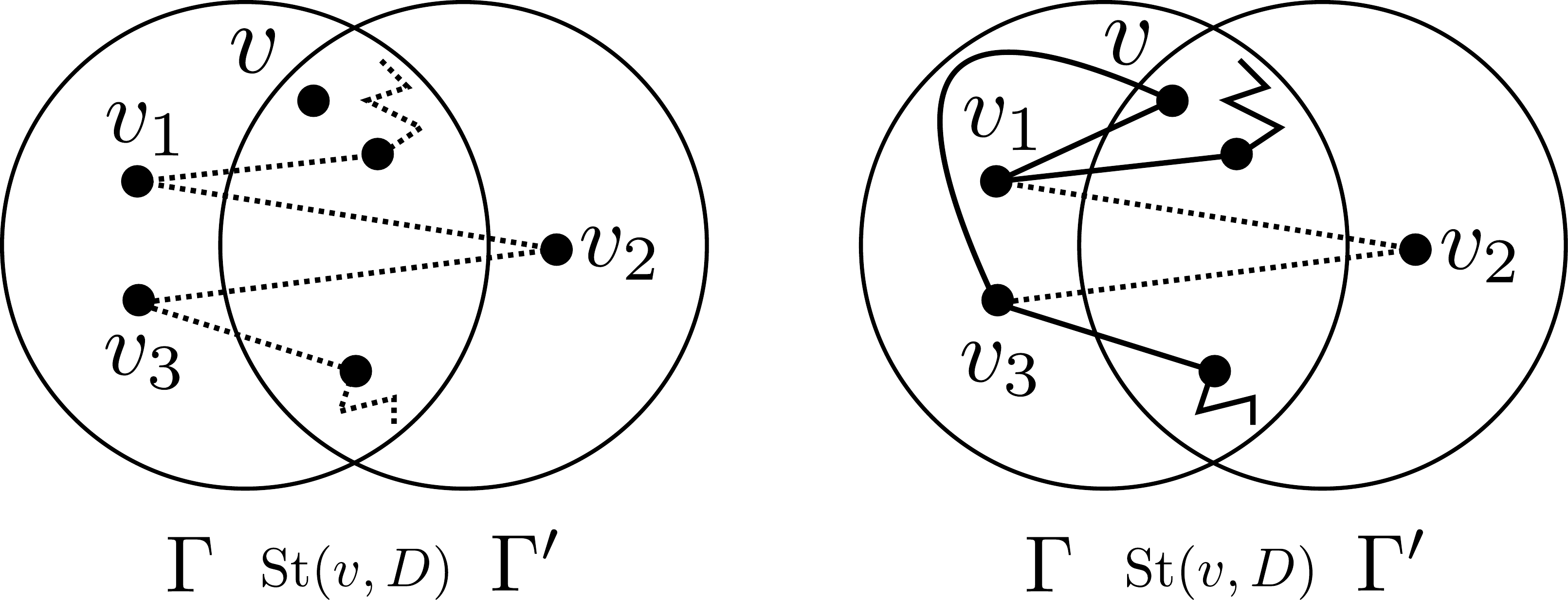}
\caption{Dotted and real lines represent pairs of non adjacent vertices in the double $D$. 
The left picture illustrates $\Lambda_1$ in the double $D$. 
The right picture illustrates replacing $v_2$ with $v$. 
}
\label{g}
\end{figure}
Then, as illustrated in Figure \ref{g}, we can see $(\Lambda_1')^c \cong P_{n_1}$ and hence $\Lambda_1'$ satisfies the condition (a$'$). 
In fact, the map $\phi: V(\Lambda_1) \rightarrow (V(\Lambda_1) \setminus \{ v_2 \}) \sqcup \{ v \}$ define by
$$\phi(x) = 
 \begin{cases}
    v & (\mbox{if} \ x = v_2) \\
    x & (otherwise)
\end{cases}
$$ 
induces an isomorphism from $\Lambda_1^c$ onto $(\Lambda_1')^c$. 
To see this, observe that 
$$V(\Lambda_1) \cap V(\mathrm{St}(v, \Gamma)) = V(\Lambda_1) \setminus \{v_1, v_2, v_3 \}.$$
This implies that the vertex $v$ and a vertex $x \in V(\Lambda_1) \setminus \{ v_2 \}$ span an edge in $\Gamma^c$ if and only if $x \in \{ v_1 , v_3\}$. 
Thus $\phi$ induces an isomorphism. 
Next we show that $\Lambda_1'$ satisfies the condition (b). 
Note that $*_{i=2}^m \Lambda_i$ is a full subgraph of $\mathrm{St}(v, \Gamma)$. 
Therefore, for any $x \in V(*_{i=2}^m \Lambda_i)$, the vertices $v$ and $x$ span an edge in $\Gamma$. 
On the other hand, $\Gamma[V(\Lambda_1) \setminus \{ v_2 \}]*( *_{i=2}^m \Lambda_i) \leq \Gamma$. 
Thus, for any $x_1 \in (V(\Lambda_1) \setminus \{ v_2 \}) \sqcup \{ v \}$ and $x_2 \in V(*_{i=2}^m \Lambda_i)$, the vertices $x_1$ and $x_2$ span an edge in $\Gamma$. 
Since $\Lambda_1' \leq \Gamma$ and $*_{i=2}^m \Lambda_i \leq \Gamma$, wee see $\Lambda_1' * (*_{i=2}^m \Lambda_i) \leq \Gamma$, as desired.  

{\bf Case 2} $\check{\Lambda}_1 \cong P_2^c$. 
We label $V(\check{\Lambda}_1)$ by $\{ v_1' , v_2 \}$ so that $v_1' \in V(\Gamma') \setminus V(\mathrm{St}(v', \Gamma'))$ and $v_2 \in V(\Gamma) \setminus V(\mathrm{St}(v, \Gamma))$. 
Then, $v_1'$ and $v_2$ are not adjacent in $D$. 
We divide the proof into the following two cases (1) and (2). 

{\bf (1)} $\mathrm{deg}(v_1', \Lambda^c) = 1$. 
Set $\Lambda_1' := \Gamma[(V(\Lambda_1) \setminus \{ v_1' \}) \sqcup \{ v \}]$. 
A similar argument as in Case 1 implies that $\Lambda_1'$ satisfies the conditions (a$'$) (in particular we have $(\Lambda_1')^c \cong P_{n_1}$) and (b). 

{\bf (2)} $\mathrm{deg}(v_1', \Lambda^c) = 2$. 
Since $\Gamma = \Gamma'$ and since $v_1' \notin \mathrm{St}(v)$, the set $V(\Gamma) \setminus V(\mathrm{St}(v, \Gamma))$ has the copy, $v_1$, of $v_1'$. 
We pick the vertex $u \in V(\Lambda_1) \cap V(\mathrm{St}(v, D))$ so that $u$ is not adjacent to $v_1'$ in $\Gamma'$ and that $u \neq v_2$ by using the fact that $\mathrm{deg}(v_1', \Lambda^c) = 2$. 
Then the vertex $v_2$ is adjacent to $u$ and $v_1$ is not adjacent to $u$. 
Hence, we have $v_1 \neq v_2$. 
So we furthermore divide the case (2) into the following two cases (2-i) and (2-ii). 
\begin{figure}
\centering
\includegraphics[scale=0.25,clip]{./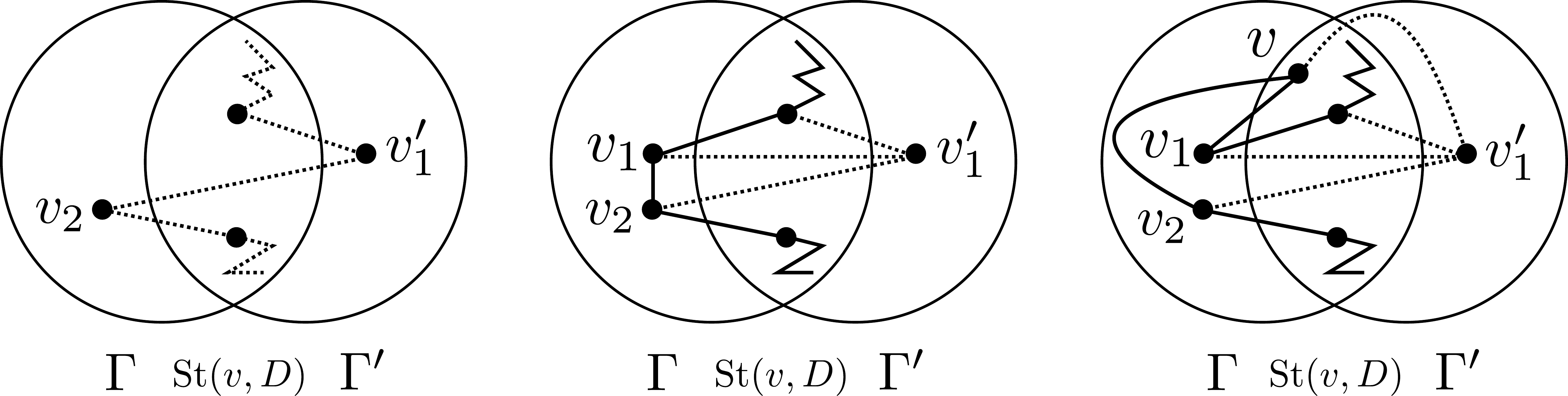}
\caption{The left picture illustrates $\Lambda_1 \leq D = \Gamma \cup_{\mathrm{St}(v, \Gamma)} \Gamma'$. 
Dotted and real lines represent pairs of non-adjacent vertices. 
The picture on the centre illustrate replacing $v_1'$ with $v_1$ in the case (2-1) and the right picture illustrates modifying $P_{n_1}^c$ into $P_{n_1 + 1}^c$ in the case (2-2). 
}
\label{e}
\end{figure}

{\bf (2-i)} The vertices $v_1$ and $v_2$ are not adjacent in $\Gamma$. 
Set $\Lambda_1' := \Gamma[(V(\Lambda_1) \setminus \{ v_1' \}) \sqcup \{ v_1 \}]$. 
Then as illustrated in the centre of Figure \ref{e}, $\Lambda_1'$ satisfies the condition (a$'$) $ (\Lambda_1')^c \cong P_{n_1}$. 
Moreover, we can show that $\Lambda_1'$ satisfies the condition (b) as follows. 
We first see $*_{i=2}^m \Lambda_i \leq \mathrm{St}(v_1, \Gamma)$. 
Pick any vertex $x \in V(*_{i=2}^m \Lambda_i) \subset V(\mathrm{St}(v, \Gamma)) = V(\mathrm{St}(v, \Gamma'))$. 
Then since the vertices $v_1'$ and $x$ span an edge in $\Gamma'$, and since $v_1$ is the copy corresponding to $v_1'$, we see that $v_1$ and $x$ span an edge in $\Gamma$. 
So $*_{i=2}^m \Lambda_i \leq \mathrm{St}(v_1, \Gamma)$ holds. 
Besides, we have $\Gamma[V(\Lambda_1) \setminus \{ v_1' \}] * (*_{i=2}^m \Lambda_i )\leq \Gamma$. 
Hence, for any $x_1 \in V(\Lambda_1')$ and $x_2 \in V(*_{i=2}^m \Lambda_i)$, the vertices $x_1$ and $x_2$ span an edge in $\Gamma$. 
Thus $\Lambda_1'$ satisfies the condition (b). 
 
{\bf (2-ii)} The vertices $v_1$ and $v_2$ are adjacent in $\Gamma$. 
Set $\Lambda_1' := \Gamma[(V(\Lambda_1) \setminus \{ v_1' \}) \sqcup \{ v, v_1 \}]$. 
Then as illustrated in the right of Figure \ref{e}, we have $(\Lambda_1')^c \cong P_{n_1 + 1}$ and so $\Lambda_1'$ satisfies the condition (a$'$). 
It is now a routine work to see the following. 
\begin{enumerate}
 \item[$\bullet$] $*_{i=2}^m \Lambda_i \leq \mathrm{St}(v, \Gamma)$. 
 \item[$\bullet$] $*_{i=2}^m \Lambda_i \leq \mathrm{St}(v_1, \Gamma)$. 
 \item[$\bullet$] $\Gamma[V(\Lambda_1) \setminus \{ v_1' \}] * (*_{i=2}^m \Lambda_i )\leq \Gamma$. 
\end{enumerate}
These show that $\Lambda_1'$ satisfies the condition (b). 
\end{proof}

We prove the following theorem, which is equivalent to Theorem \ref{Main-theorem}(1). 

\begin{theorem}
Suppose that $\Lambda$ is the complement of a linear forest and $\Gamma$ is a finite graph. 
Then $A(\Lambda) \hookrightarrow A(\Gamma)$ implies $\Lambda \leq \Gamma$. 
\label{main-theorem-2}
\end{theorem}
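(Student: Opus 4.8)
The plan is to deduce the theorem by combining three ingredients already available: the reduction of Casals-Ruiz (Theorem \ref{Casals-Ruiz}), the doubling description of the extension graph due to Kim-Koberda (Theorem \ref{Kim-Koberda}(2)), and an inductive descent powered by Lemma \ref{Extension}. Since $\Lambda$ is the complement of a linear forest, it is in particular the complement of a forest, so Theorem \ref{Casals-Ruiz} applies: the hypothesis $A(\Lambda) \hookrightarrow A(\Gamma)$ gives $\Lambda \leq \Gamma^e$, i.e., $\Lambda$ sits inside the extension graph of $\Gamma$ as a full subgraph.

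Next I would feed this full embedding into Theorem \ref{Kim-Koberda}(2), obtaining a finite increasing chain of full subgraphs of $\Gamma^e$,
$$\Gamma = \Gamma_0 \leq \Gamma_1 \leq \cdots \leq \Gamma_n \leq \Gamma^e,$$
where each $\Gamma_i$ is the double $D_{v_i}(\Gamma_{i-1})$ of $\Gamma_{i-1}$ along the star of some vertex $v_i \in V(\Gamma_{i-1})$, and $\Lambda \leq \Gamma_n$.

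Finally I would descend this chain by downward induction on $i$, proving $\Lambda \leq \Gamma_i$ for all $i$. The base case $i = n$ is the output of the previous step. For the inductive step, suppose $\Lambda \leq \Gamma_i = D_{v_i}(\Gamma_{i-1})$; since $\Gamma_{i-1}$ is a finite graph, $v_i \in V(\Gamma_{i-1})$, and $\Lambda$ is (unchanged and still) the complement of a linear forest, Lemma \ref{Extension} applied with $\Gamma_{i-1}$ and $v_i$ in place of $\Gamma$ and $v$ yields $\Lambda \leq \Gamma_{i-1}$. Iterating down to $i = 0$ gives $\Lambda \leq \Gamma_0 = \Gamma$, as required.

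The genuinely substantive content has already been isolated as Lemma \ref{Extension} — the statement that a single doubling along a star cannot create a new full copy of the complement of a linear forest — so I expect no real obstacle in the argument above; it is pure assembly. The only point needing a moment's attention is checking that the hypotheses of Lemma \ref{Extension} are satisfied at every stage of the descent, which is immediate since each $\Gamma_{i-1}$ is a finite graph, each $v_i$ is one of its vertices, and $\Lambda$ never changes.
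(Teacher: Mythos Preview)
Your proposal is correct and matches the paper's own proof essentially step for step: apply Theorem~\ref{Casals-Ruiz} to obtain $\Lambda \leq \Gamma^e$, invoke Theorem~\ref{Kim-Koberda}(2) to produce the finite doubling chain $\Gamma = \Gamma_0 \leq \cdots \leq \Gamma_n$ with $\Lambda \leq \Gamma_n$, and then repeatedly apply Lemma~\ref{Extension} to descend to $\Lambda \leq \Gamma$. Your observation that all the real work is isolated in Lemma~\ref{Extension} is exactly right.
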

\begin{proof}
Let $\Lambda$ be the complement of a linear forest and $\Gamma$ a finite graph, and suppose that $A(\Lambda) \hookrightarrow A(\Gamma)$. 
Then Theorem \ref{Casals-Ruiz} implies $\Lambda \leq \Gamma^e$. 
By Theorem \ref{Kim-Koberda}(2), there exists a finite increasing sequence of full subgraphs of $\Gamma^e$,
 $$\Gamma = \Gamma_0 \leq \Gamma_1 \leq \Gamma_2 \leq \cdots \leq \Gamma_n \leq \Gamma^e .$$ 
 such that
\begin{enumerate}
 \item[$\bullet$] $\Gamma_i$ is the double of $\Gamma_{i-1}$ along the star of a vertex of $\Gamma_{i-1}$. 
 \item[$\bullet$] $\Lambda \leq \Gamma_n$. 
\end{enumerate}
Hence by repeatedly using Lemma \ref{Extension}, we see $\Lambda \leq \Gamma$, as desired. 
\end{proof}

\begin{proof}[{\bf Proof of Theorem \ref{Main-theorem}(1)}.]
Let $\Lambda$ be a finite linear forest and suppose that $G(\Lambda) \hookrightarrow G(\Gamma)$ for some finite graph $\Gamma$. 
Then $A(\Lambda^c) \hookrightarrow A(\Gamma^c)$ and hence we have $\Lambda^c \leq \Gamma^c$ by Theorem \ref{main-theorem-2}. 
Hence, by Lemma \ref{Obvious}(1), we have $\Lambda \leq \Gamma$. 
\end{proof}

\begin{proof}[{\bf Proof of Theorem \ref{mapping-class-groups}}.]
Suppose that $\phi: A(\Lambda) \hookrightarrow \mathcal{M}(\Sigma_{g,n})$ is an embedding. 
Then as in the proof of \cite[Lemma 2.3]{Kim-Koberda-3}, there exists an induced subgraph $X$ of $\mathcal{C}(\Sigma_{g,n})$ such that $A(\Lambda) \hookrightarrow A(X) \hookrightarrow \mathcal{M}(\Sigma_{g,n})$. 
Thus Theorem \ref{main-theorem-2} implies that $\Lambda \leq X$, and so $\Lambda \leq \mathcal{C}(\Sigma_{g,n})$, as desired. 
\end{proof}

\begin{lemma}
Let $\Lambda$ be a finite graph and $\bar{\Lambda}$ be a subdivision of $\Lambda$. 
Then $G(\Lambda) \hookrightarrow G(\bar{\Lambda})$. 
\label{Complementary-subdivision}
\end{lemma}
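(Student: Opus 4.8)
The plan is to reduce to a single edge subdivision and then exhibit a copy of $\Lambda^c$ inside an extension graph. First I would observe that it suffices to treat the case where $\bar\Lambda$ is obtained from $\Lambda$ by subdividing one edge exactly once: a general subdivision is obtained by a finite sequence of such single subdivisions $\Lambda = \Lambda_0, \Lambda_1, \dots, \Lambda_k = \bar\Lambda$, and composing the embeddings $G(\Lambda_{i-1}) \hookrightarrow G(\Lambda_i)$ gives $G(\Lambda) \hookrightarrow G(\bar\Lambda)$. So fix an edge $e = [u,w] \in E(\Lambda)$ and let $\bar\Lambda$ be obtained by deleting $e$ and adding a new vertex $x$ together with the edges $[u,x]$ and $[x,w]$. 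Passing to complements, the assertion $G(\Lambda) \hookrightarrow G(\bar\Lambda)$ means $A(\Lambda^c) \hookrightarrow A(\bar\Lambda^c)$, and one checks directly that $\bar\Lambda^c$ is obtained from $\Lambda^c$ by adding the edge $[u,w]$ together with a new vertex $x$ adjacent to every vertex of $V(\Lambda)$ except $u$ and $w$; in particular the full subgraph of $\bar\Lambda^c$ on $V(\Lambda) \setminus \{w\}$ is exactly $\Lambda^c[V(\Lambda) \setminus \{w\}]$.

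Next I would look for a copy of $\Lambda^c$ as a full subgraph of the extension graph $(\bar\Lambda^c)^e$, since by the Kim--Koberda machinery (Theorem \ref{Kim-Koberda}; cf. the first bulleted consequence in the Introduction) such a full subgraph produces the desired embedding $A(\Lambda^c) \hookrightarrow A(\bar\Lambda^c)$. The candidate vertex set is $Y := (V(\Lambda) \setminus \{w\}) \cup \{\tilde w\}$, where $\tilde w := x w x^{-1}$, a conjugate of the vertex $w$ and hence a vertex of $(\bar\Lambda^c)^e$. Since $\bar\Lambda^c$ is a full subgraph of $(\bar\Lambda^c)^e$, the vertices of $V(\Lambda) \setminus \{w\}$ already span a copy of $\Lambda^c[V(\Lambda) \setminus \{w\}]$ there, so it remains to determine the neighbours of $\tilde w$ inside $Y$, i.e.\ to decide for which $a \in V(\Lambda) \setminus \{w\}$ the elements $a$ and $xwx^{-1}$ commute in $A(\bar\Lambda^c)$. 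For $a \in V(\Lambda) \setminus \{u,w\}$ we have $[a,x] \in E(\bar\Lambda^c)$, so $a$ commutes with $x$; hence $a$ commutes with $xwx^{-1}$ if and only if $a$ commutes with $w$ (conjugate the relation by $x$), i.e.\ if and only if $[a,w] \notin E(\Lambda)$, i.e.\ if and only if $a$ is adjacent to $w$ in $\Lambda^c$. This is exactly the prescription making $a \mapsto a$ $(a \neq w)$, $w \mapsto \tilde w$ an isomorphism from $\Lambda^c$ onto $(\bar\Lambda^c)^e[Y]$, provided also that $u$ and $\tilde w$ are \emph{not} adjacent (recall $[u,w] \in E(\Lambda)$, so $u,w$ are non-adjacent in $\Lambda^c$).

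The one point needing a genuine argument is that $u$ does not commute with $xwx^{-1}$ in $A(\bar\Lambda^c)$. Here I would use the retraction $\rho$ of $A(\bar\Lambda^c)$ onto the RAAG on the full subgraph spanned by $\{u,w,x\}$: in $\bar\Lambda^c$ this subgraph carries the single edge $[u,w]$, so the RAAG is $\langle u,w\rangle * \langle x\rangle \cong \mathbb{Z}^2 * \mathbb{Z}$, and $\rho$ fixes $u,w,x$. If $u$ and $xwx^{-1}$ commuted in $A(\bar\Lambda^c)$, so would their $\rho$-images in $\mathbb{Z}^2 * \mathbb{Z}$; but the centraliser of the nontrivial element $u$ of the factor $\langle u,w\rangle$ in the free product is $\langle u,w\rangle$, while $xwx^{-1}$, having reduced syllable length $3$, does not lie in $\langle u,w\rangle$ — a contradiction. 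Together with the previous paragraph this gives $(\bar\Lambda^c)^e[Y] \cong \Lambda^c$, hence $\Lambda^c \leq (\bar\Lambda^c)^e$, and applying Theorem \ref{Kim-Koberda} yields $A(\Lambda^c) \hookrightarrow A(\bar\Lambda^c)$, that is, $G(\Lambda) \hookrightarrow G(\bar\Lambda)$. The main obstacle is precisely this last commutation analysis — checking that the conjugating vertex $x$ genuinely breaks the commutation of $u$ and $w$ without creating any spurious commutations with the remaining vertices — and the free-product retraction is what makes it clean.
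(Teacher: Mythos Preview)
Your proof is correct and rests on the same idea as the paper's: reduce to a single edge subdivision, pass to complements, and replace the endpoint $w$ by a ``twisted copy'' that is adjacent to each old vertex exactly when $w$ was in $\Lambda^c$, except that the adjacency with the other endpoint is broken. The execution differs. The paper works entirely inside the finite double $D_x(\bar\Lambda^c)$ (in your notation), taking for the twisted copy the vertex $w'$ in the second sheet; the non-adjacency of $u$ and $w'$ is then the purely combinatorial Lemma~\ref{Obvious}(2) (vertices on opposite sides of a double, outside the amalgamating star, are never adjacent), and the embedding comes from Theorem~\ref{Kim-Koberda}(1). You instead work in the extension graph with $\tilde w = xwx^{-1}$; this is literally the same vertex --- the double along $\mathrm{St}(x,\bar\Lambda^c)$ sits in $(\bar\Lambda^c)^e$ via conjugation by $x$ --- but your verification that $u$ and $\tilde w$ do not commute requires the retraction to $\mathbb{Z}^2 * \mathbb{Z}$ and a free-product centraliser argument. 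The paper's route is slightly more economical in that it never leaves finite graph combinatorics, while yours makes the underlying group theory explicit; both are short and either is fine.
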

\begin{proof}
We may assume that $\bar{\Lambda}$ is obtained from $\Lambda$ by subdividing an edge $[ v, w ]$ of $\Lambda$ into two edges $[v, u]$ and $[u, w]$, where $u$ is a new vertex. 
Note that 
\begin{align*}
V(\bar{\Lambda}) &= V(\Lambda) \sqcup \{ u \} \\
E(\bar{\Lambda}) &= (E(\Lambda) \setminus \{ [v,w] \})   \sqcup \{ [v, u] , [u,w] \} \\
\end{align*}
Then the desired result follows from Claim \ref{Subdiv}. 
\begin{claim}
$\Lambda^c \leq D_u(\bar{\Lambda}^c)$ and hence $A(\Lambda^c) \hookrightarrow A(D_u(\bar{\Lambda}^c))$, where $\bar{\Lambda}^c = (\bar{\Lambda})^c$ is the complement of $\bar{\Lambda}$. 
\label{Subdiv}
\end{claim}
In fact, we have $A(D_u(\bar{\Lambda}^c)) \hookrightarrow A(\bar{\Lambda}^c)$ by Theorem \ref{Kim-Koberda}(1) and therefore 
$$ G(\Lambda) = A(\Lambda^c) \hookrightarrow A(D_u(\bar{\Lambda}^c)) \hookrightarrow A(\bar{\Lambda}^c) = G(\bar{\Lambda}),$$
as desired. 
To prove the above claim, we note that 
$V(\mathrm{St}(u, \bar{\Lambda}))$ consists of the three vertex $u, v$ and $w$. 
Hence, 
$$V(\mathrm{St}(u, \bar{\Lambda}^c)) = \{u \} \sqcup (V(\Lambda^c) \setminus \{v, w \}), $$
and 
$$ V(D_u(\bar{\Lambda}^c)) = \{ u \} \sqcup \{v, w \} \sqcup \{v', w' \} \sqcup (V(\Lambda^c) \setminus \{ v, w \}),$$ 
where $v'$ and $w'$ are the copies of the vertex $v$ and $w$ in $(\bar{\Lambda}^c)'$. 
Let $\Delta$ be the full subgraph of $D_u(\bar{\Lambda}^c)$ induced by $V(\Delta):= (V(\Lambda^c) \setminus \{w \}) \sqcup \{ w' \}$. 
Then there exists a natural bijection $\phi: V(\Lambda^c) \rightarrow V(\Delta)$, whose restriction to $V(\Lambda^c) \setminus \{w \}$ is the identity map (so $\phi$ maps $w$ to $w'$). 
To show that $\phi$ induces an isomorphism from $\Lambda^c$ onto $\Delta$, we prove the following. 
\begin{enumerate}
 \item[(i)] Any two vertices of $V(\Lambda^c) \setminus \{ w \}$ span an edge of $\Lambda^c$ if and only if they span an edge of $\Delta$. 
 \item[(ii)] For each vertex $x$ of $V(\Lambda^c) \setminus \{ w \}$, $[x, w]$ is an edge of $\Lambda^c$ if and only if $[ x, w' ]$ is an edge of $\Delta$. 
\end{enumerate}

We first prove (i). 
To this end, we prove the following identities.      
$$\Lambda^c[V(\Lambda^c) \setminus \{ w \}] = \bar{\Lambda}^c[V(\Lambda^c) \setminus \{ w \}] = (D_u(\bar{\Lambda}^c))[V(\Lambda^c) \setminus \{ w \}] = \Delta[V(\Lambda^c) \setminus \{ w \}] .$$
The first identity follows from the following easy  facts. 
\begin{enumerate}
 \item[$\bullet$] $\Lambda[V(\Lambda) \setminus \{ w \}] = \bar{\Lambda}[V(\Lambda) \setminus \{ w \}]$. 
 \item[$\bullet$] $(\Lambda[V(\Lambda) \setminus \{ w \}])^c  = \Lambda^c[V(\Lambda) \setminus \{ w \}] = \Lambda^c[V(\Lambda^c) \setminus \{ w \}]$. 
 \item[$\bullet$] $(\bar{\Lambda}[V(\Lambda) \setminus \{ w \}])^c = \bar{\Lambda}^c[V(\Lambda^c) \setminus \{ w \}]$. 
\end{enumerate}
The second and third identities follow from the fact that $\bar{\Lambda}^c, \Delta \leq D_u(\bar{\Lambda}^c)$ and Lemma \ref{Obvious}(3). 
Thus we obtain the desired identity $\Lambda^c[V(\Lambda^c) \setminus \{ w \}] = \Delta[V(\Lambda^c) \setminus \{ w \}]$. 
The assertion (i) is immediate from this identity. 

We now prove (ii). 
Pick a vertex $x$ of $V(\Lambda^c) \setminus \{ w \}$.  
If $x = v$, then $[v, w]$ is not an edge in $\Lambda^c$ and $[ v, w' ]$ is not an edge in $D_u(\bar{\Lambda}^c)$ by Lemma \ref{Obvious}(2). 
So we assume $x \neq v$. 
Then the following hold. 
\begin{align*}
 [x, w] \mbox{ is an edge of } \Lambda^c 
& \Leftrightarrow [x, w] \mbox{ is an edge of } \bar{\Lambda}^c \ \ (x \neq v, w) \\
& \Leftrightarrow [x, w] \mbox{ is an edge of } D_u(\bar{\Lambda}^c) \\ 
& \Leftrightarrow [x, w'] \mbox{ is an edge of } D_u(\bar{\Lambda}^c) \ \ (x \in V(\mathrm{St}(u, \bar{\Lambda}^c))) \\
& \Leftrightarrow [x, w'] \mbox{ is an edge of } \Delta
\end{align*}
Thus we obtain the assertion (ii). 

By using the assertion (i) and (ii), we see that $\phi$ induces an isomorphism from $\Lambda^c$ onto $\Delta$. 
So we obtain Claim \ref{Subdiv} and Lemma \ref{Complementary-subdivision}. 
\end{proof}

\begin{proof}[{\bf Proof of Theorem \ref{Anti-lines-anti-cycles}}.] 

(1) Since $P_m$ is a linear forest, $G(P_m) \hookrightarrow G(P_n)$ if and only if $P_m \leq P_n$. 
 The latter relation is equivalent to the inequality $m \leq n$, hence we obtain the desired result. 
 
(2) The proof is similar as in the proof of (1). 

(3) If $m \leq n$, then $C_n$ is a subdivision of $C_m$, and so $G(C_m) \hookrightarrow G(C_n)$ by Lemma \ref{Complementary-subdivision}. 
 Suppose that $G(C_m) \hookrightarrow G(C_n)$. 
To see the converse, note that $P_{m-1} \leq C_m$. 
This implies that $G(P_{m-1}) \hookrightarrow G(C_m)$. 
Hence, $G(P_{m-1}) \hookrightarrow G(C_n)$. 
Thus we have $m-1 \leq n-1$ by Theorem \ref{Anti-lines-anti-cycles}(2), that is, $m \leq n$. 

(4-1) Note that $G(C_3) \cong F_3$ and $G(P_2) \cong F_2$, and $G(P_1) \cong \mathbb{Z}$. 
Thus $G(C_3) \hookrightarrow G(P_2)$ but $G(C_3)$ does not embed into $G(P_1)$. 

(4-2) We first show that $G(C_4) \hookrightarrow G(P_3)$. 
 To this end, note that $C_4^c = P_2 \sqcup  P_2$ and $P_3^c = P_1 \sqcup P_2$. 
 The double $D_u(P_3^c)$, where $u$ is the isolated vertex of $P_3^c$, is isomorphic to $P_1 \sqcup P_2 \sqcup P_2$, and so $C_4^c$ is a full subgraph of $D_u(P_3^c)$. 
 Hence, we have 
$$G(C_4) = A(C_4^c) \hookrightarrow A(D_u(P_3^c)) \hookrightarrow A(P_3^c)= G(P_3), $$
where the second embedding $A(D_u(P_3^c)) \hookrightarrow A(P_3^c)$ follows from Theorem \ref{Kim-Koberda}(1). 
Thus we have only to show that $G(C_4)$ can not be embedded into $G(P_2)$. 
But, this follows from the fact that $G(C_4)$ contains $\mathbb{Z}^2$ whereas $G(P_2) \cong F_2$ does not. 

(4-3) Suppose $G(C_m) \hookrightarrow G(P_n)$. 
Since $G(P_{m-1}) \hookrightarrow G(C_m)$, we have $G(P_{m-1}) \hookrightarrow G(P_n)$, and so $m-1 \leq n$ by Theorem \ref{Anti-lines-anti-cycles}(1). 
\end{proof}

\section{Proofs of Theorems \ref{Main-theorem}(2) and \ref{Anti-trees-deg3} \label{plus-minus-technique}}

\begin{proposition}
Suppose that $\Lambda$ is not a linear forest and that  $\mathrm{deg}_{\mathrm{max}}(\Lambda) \leq 2$. 
Then there exists a finite graph $\Gamma$ such that $G(\Lambda) \hookrightarrow G(\Gamma)$, though $\Lambda \not\leq \Gamma$. 
\label{Anti-deg-2}
\end{proposition}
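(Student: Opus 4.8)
The plan is to take $\Gamma$ to be a subdivision of $\Lambda$, so that the embedding $G(\Lambda) \hookrightarrow G(\Gamma)$ is handed to us for free by Lemma \ref{Complementary-subdivision}, and then to verify by a purely combinatorial argument that $\Lambda$ cannot sit inside $\Gamma$ as a full subgraph. Since $\mathrm{deg}_{\mathrm{max}}(\Lambda) \leq 2$, the graph $\Lambda$ is a disjoint union of finitely many path graphs and cyclic graphs, and since $\Lambda$ is not a linear forest, at least one cyclic component occurs; I would fix one such component $C$, of length $\ell \geq 3$, and let $\Gamma = \bar{\Lambda}$ be the subdivision obtained from $\Lambda$ by subdividing a single edge of $C$ exactly once, leaving every other edge untouched. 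Thus $\Gamma$ is $\Lambda$ with this one copy of $C_\ell$ replaced by $C_{\ell+1}$, and Lemma \ref{Complementary-subdivision} gives $G(\Lambda) \hookrightarrow G(\Gamma)$ at once. Everything then reduces to showing $\Lambda \not\leq \Gamma$.

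The combinatorial input I would record first is a structural fact about full subgraphs of a disjoint union of paths and cycles: (i) every full subgraph of a path graph is a linear forest, and (ii) every full subgraph of a cyclic graph $C_n$ is either $C_n$ itself or a linear forest, since deleting any nonempty vertex set from $C_n$ breaks it into a disjoint union of paths. Consequently, if some connected full subgraph of such a disjoint union happens to be a cyclic graph $C_m$, then it must coincide with one of the cyclic components of the ambient graph, and in particular $m$ equals the length of that component.

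Next I would derive the contradiction. Suppose $\Lambda \leq \Gamma$, say $\Lambda' = \Gamma[V'] \cong \Lambda$ for some $V' \subseteq V(\Gamma)$. Let $M$ be the multiset of lengths of the cyclic components of $\Lambda$ (equivalently of $\Lambda'$) and $M'$ the corresponding multiset for $\Gamma$; by construction $M'$ is obtained from $M$ by deleting one copy of $\ell$ and adjoining one copy of $\ell+1$, so $M \neq M'$, while $|M| = |M'|$ because subdividing an edge of a cycle changes neither the number of components nor their being cycles. Each cyclic component $H$ of $\Lambda'$ lies inside a single connected component $\Gamma_0$ of $\Gamma$, and since $H = \Gamma[V(H)] = \Gamma_0[V(H)]$ is a cyclic graph, the structural fact forces $\Gamma_0$ to be a cyclic component with $V(H) = V(\Gamma_0)$, i.e.\ $H = \Gamma_0$. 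Distinct cyclic components of $\Lambda'$ therefore occupy distinct cyclic components of $\Gamma$, so we obtain an injection from the cyclic components of $\Lambda'$ into those of $\Gamma$; as $|M| = |M'|$ this injection is a bijection, and it identifies each cyclic component of $\Lambda'$ with an equal-length cyclic component of $\Gamma$. Hence $M = M'$, contradicting $M \neq M'$. This proves $\Lambda \not\leq \Gamma$ and completes the proof of Proposition \ref{Anti-deg-2}.

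As for where the difficulty lies: the embedding half is entirely free from Lemma \ref{Complementary-subdivision}, so there is no analytic obstacle; the only points needing care are in the last step, namely that a cyclic full subgraph of a cyclic graph must be the whole graph (this is what prevents a ``short'' cycle of $\Lambda$ from hiding inside a longer cycle of $\Gamma$, or inside a path component of $\Gamma$) and that the induced assignment of cyclic components is injective. Once those two points are nailed down, the length-multiset count finishes it. A mild alternative, if one prefers cleaner bookkeeping, is to subdivide one edge in \emph{every} cyclic component of $\Lambda$ simultaneously; then $M' = \{\, m+1 \mid m \in M \,\}$, the same argument produces a bijection matching equal lengths, and one gets the impossible equality $\sum_{m \in M'} m = \sum_{m \in M} m + |M|$ with $|M| \geq 1$.
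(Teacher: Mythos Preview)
Your proof is correct. The structural fact that the only cyclic full subgraph of $C_n$ is $C_n$ itself is exactly the right lever, and the multiset-of-cycle-lengths argument is clean and watertight.

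The paper takes a different route. Rather than subdividing a single edge, it sets $l := 1 + \max_j i_j$ (the maximum over \emph{all} component sizes, paths and cycles alike) and takes $\Gamma := \sqcup^{m+n} C_l$, a disjoint union of $m+n$ copies of one large cycle. The embedding $G(\Lambda) \hookrightarrow G(\Gamma)$ then comes from Theorem~\ref{Anti-lines-anti-cycles}(2)(3) applied componentwise, and the non-embedding $\Lambda \not\leq \Gamma$ is immediate: $\Lambda$ contains some $C_{i_j}$ with $i_j < l$, and no such cycle can sit as a full subgraph inside any $C_l$. In effect the paper front-loads the subdivision work into Theorem~\ref{Anti-lines-anti-cycles}, whereas you invoke Lemma~\ref{Complementary-subdivision} directly; your $\Gamma$ is more economical (a single extra vertex) at the price of a slightly longer combinatorial verification, while the paper's choice of $\Gamma$ makes the non-embedding a one-line observation. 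Your ``mild alternative'' of subdividing every cyclic component is an intermediate between the two and gives perhaps the tidiest bookkeeping of all.
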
 
\begin{proof}
Our assumptions imply that each connected component of $\Lambda$ is either a path graph or a cyclic graph and $\Lambda$ contains a cyclic graph. 
Hence, $\Lambda$ is the disjoint union of path graphs and cyclic graphs and the RAAG $G(\Lambda)$ is isomorphic to the direct product $G(P_{i_1}) \times \cdots \times G(P_{i_m}) \times G(C_{i_{m+1}}) \times \cdots G(C_{i_{m+n}})$, where $i_1, \ldots , i_{m+n}$ are positive integers and $i_{m+1}, \ldots , i_{m+n}$ are not less than $3$. 
Set $l := 1 + \mathrm{max} \{ i_{j} | 1 \leq j \leq m + n \}$, and consider the graph $\Gamma:= \sqcup^{m+n} C_l$. 
Then $\Lambda$ cannot be embedded into $\Gamma$, but $G(\Lambda) \hookrightarrow G(\Gamma)$ by Theorem \ref{Anti-lines-anti-cycles}(2)(3). 
\end{proof} 

Proposition \ref{Anti-deg-2} proves Theorem \ref{Main-theorem}(2) in the case where $\mathrm{deg}_{\mathrm{max}}(\Lambda) \leq 2$. 
In the following, we treat the case where $\mathrm{deg}_{\mathrm{max}}(\Lambda) \geq 3$. 

\begin{definition}
Let $\Lambda$ be a finite graph and $u$ a vertex of $\Lambda$ with $\mathrm{deg}(u, \Lambda) \geq 2$. 
Pick two vertices $w_1$ and $w_2$ from $\mathrm{Lk}(u, \Lambda)$. 
\begin{enumerate}
 \item[$(-)$] $\Lambda_u^{-}= \Lambda_u^{-}(w_1, w_2)$ denotes the graph with the following property. 
\begin{enumerate}
 \item[$\bullet$] $V(\Lambda_u^{-}) = V(\Lambda) \sqcup \{v \}$, where $v$ is a new vertex. 
 \item[$\bullet$] $E(\Lambda_u^{-}) = (E(\Lambda) \setminus \{ [u, w_1] , [u, w_2] \}) \sqcup \{ [v, w_1], [v, u], [v, w_2] \}$. 
\end{enumerate}
 \item[$(+)$] $\Lambda_u^{+}= \Lambda_u^{+}(w_1, w_2)$ denotes the graph obtained from $\Lambda_u^{-}$ by adding the edge $[u, w_2]$. 
\end{enumerate}
See Figure \ref{b}. 
\begin{figure}
\centering
\includegraphics[scale=0.22,clip]{./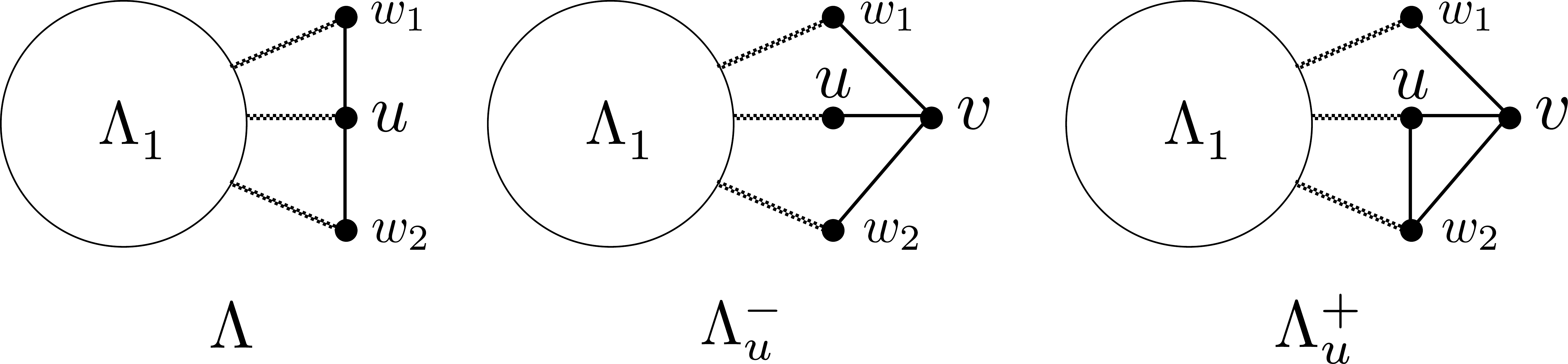}
\caption{
The left picture illustrates $\Lambda$, where $\Lambda_1$ is the full subgraph $\Lambda[V(\Lambda) \setminus \{ u, w_1, w_2 \}]$. 
Mosaic lines represent possible edges while real lines represent edges (in these pictures we omit the possible edge $[w_1, w_2]$ for simplicity). 
The centre picture illustrates $\Lambda_u^{-}(w_1, w_2)$. 
By adding the edge $[ u, w_2 ]$ to the centred  picture, we obtain the right picture which illustrates  $\Lambda_u^{+}(w_1, w_2)$. 
}
\label{b}
\end{figure}
\label{plus-minus}
\end{definition}

We can easily see the following lemma from the definitions of the $(\pm)$-construction. 

\begin{lemma}
Let $\Lambda, \ u, \ w_1, \ w_2$ be as in Definition \ref{plus-minus}. 
\begin{enumerate}
 \item[(1)] $|\Lambda_u^{-}| = |\Lambda| + 1$ and for any vertex $x$ of $\Lambda_u^{-}$, we have 
$$\mathrm{deg} (x, \Lambda_u^{-}) = 
 \begin{cases}
    3 & (\mbox{if} \ x=v) \\
   \mathrm{deg}(x, \Lambda) - 1 & (\mbox{if} \ x=u) \\
   \mathrm{deg}(x, \Lambda) & (otherwise).
\end{cases}
$$ 
Moreover, $\Lambda_u^{-}$ is homotopically equivalent to $\Lambda$. 

 \item[(2)] $|\Lambda_u^{+}| = |\Lambda| + 1$ and for any vertex $x$ of $\Lambda_u^{+}$, we have 
$$\mathrm{deg} (x, \Lambda_u^{+}) = 
 \begin{cases}
    3 & (\mbox{if} \ x=v) \\
   \mathrm{deg}(x, \Lambda) + 1 & (\mbox{if} \ x = w_2) \\
   \mathrm{deg}(x, \Lambda) & (otherwise).
\end{cases}
$$ 
\end{enumerate}
\label{Computation_anti_degree}
\end{lemma}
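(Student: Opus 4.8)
The proof is a direct bookkeeping of the vertex and edge sets prescribed by Definition \ref{plus-minus}. Before counting, I would record the non-degeneracy facts that make the tally unambiguous: $w_1 \neq w_2$ (they are picked as two vertices of $\mathrm{Lk}(u,\Lambda)$, which is possible precisely because $\mathrm{deg}(u,\Lambda) \geq 2$), the new vertex $v$ is distinct from every vertex of $\Lambda$, and $u \notin \{w_1,w_2\}$ since $w_1,w_2 \in \mathrm{Lk}(u,\Lambda)$. In particular the three edges $[v,w_1]$, $[v,u]$, $[v,w_2]$ are pairwise distinct and none of them belongs to $E(\Lambda)$, and since $[u,w_2]$ is deleted in passing from $\Lambda$ to $\Lambda_u^{-}$ it is not an edge of $\Lambda_u^{-}$.

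For part (1), $V(\Lambda_u^{-}) = V(\Lambda) \sqcup \{v\}$ gives $|\Lambda_u^{-}| = |\Lambda| + 1$ at once. For the degrees: $v$ is incident in $\Lambda_u^{-}$ to exactly the three edges $[v,w_1]$, $[v,u]$, $[v,w_2]$, so $\mathrm{deg}(v,\Lambda_u^{-}) = 3$; the vertex $u$ loses the two edges $[u,w_1]$, $[u,w_2]$ and gains the single edge $[v,u]$, so $\mathrm{deg}(u,\Lambda_u^{-}) = \mathrm{deg}(u,\Lambda) - 1$; each $w_i$ loses $[u,w_i]$ but gains $[v,w_i]$, leaving its degree unchanged; and no edge incident to any other vertex is added or removed, so those degrees are unchanged too. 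This is the asserted formula. For the homotopy statement I would observe that $[v,u]$ is a non-loop edge of $\Lambda_u^{-}$ whose collapse identifies $v$ with $u$ and carries $[v,w_1]$, $[v,w_2]$ onto $[u,w_1]$, $[u,w_2]$ (creating neither a loop nor a multi-edge, as $u$ is adjacent in $\Lambda_u^{-}$ to neither $w_1$ nor $w_2$); hence the quotient is a simplicial graph isomorphic to $\Lambda$, and collapsing a non-loop edge of a $1$-complex is a homotopy equivalence, so $\Lambda_u^{-}$ is homotopy equivalent to $\Lambda$. Equivalently, $\Lambda_u^{-}$ deformation retracts onto $\Lambda$ by sliding $v$ along $[v,u]$ into $u$.

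For part (2), $\Lambda_u^{+}$ has the same vertex set as $\Lambda_u^{-}$, so $|\Lambda_u^{+}| = |\Lambda| + 1$, and $E(\Lambda_u^{+}) = E(\Lambda_u^{-}) \sqcup \{[u,w_2]\}$ with $[u,w_2] \notin E(\Lambda_u^{-})$ by the first paragraph. Adding this single edge increases by $1$ only the degrees of its two endpoints: $\mathrm{deg}(w_2,\Lambda_u^{+}) = \mathrm{deg}(w_2,\Lambda_u^{-}) + 1 = \mathrm{deg}(w_2,\Lambda) + 1$ and $\mathrm{deg}(u,\Lambda_u^{+}) = \mathrm{deg}(u,\Lambda_u^{-}) + 1 = \mathrm{deg}(u,\Lambda)$, while every other degree coincides with the one already computed in part (1); in particular $\mathrm{deg}(v,\Lambda_u^{+}) = 3$ and $\mathrm{deg}(w_1,\Lambda_u^{+}) = \mathrm{deg}(w_1,\Lambda)$. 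This is the asserted formula. The argument has no real obstacle; the only points needing any care are the non-degeneracy checks in the first paragraph, which make the edge counts honest, and the appeal to the standard fact that collapsing a non-loop edge of a $1$-complex preserves homotopy type.
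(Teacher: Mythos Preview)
Your proof is correct and follows essentially the same approach as the paper: the degree formulas are read off directly from the edge sets in Definition~\ref{plus-minus}, and the homotopy equivalence $\Lambda_u^{-}\simeq\Lambda$ is obtained by contracting the edge $[u,v]$. The paper's argument is simply a one-line version of yours, stating only the edge-contraction step and leaving the degree bookkeeping implicit.
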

\begin{proof}
To see that $\Lambda_u^{-}$ is homotopically equivalent to $\Lambda$, contract the edge $[u, v]$ of $\Lambda_u^{-}$ to a single vertex $u$. 
\end{proof}

\begin{lemma}
Let $\Lambda, \ u, \ w_1, \ w_2$ be as in Definition \ref{plus-minus}. 
Then $G(\Lambda) \hookrightarrow G(\Lambda_u^{\epsilon})$ for each $\epsilon= + , -$. 
\label{Heteromorphic_lemma}
\end{lemma}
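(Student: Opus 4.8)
The plan is to realise $\Lambda^c$ as a full subgraph of a double of $(\Lambda_u^{\epsilon})^c$ along the star of the vertex added by the $(\pm)$-construction, and then to combine Theorem~\ref{Kim-Koberda}(1) with the standard fact that a full-subgraph inclusion $\Delta\leq\Theta$ induces an embedding $A(\Delta)\hookrightarrow A(\Theta)$. So, fix $\epsilon\in\{+,-\}$, put $\Gamma:=(\Lambda_u^{\epsilon})^c$, and let $v$ be the new vertex. Since in $\Lambda_u^{\epsilon}$ the vertex $v$ is adjacent to exactly $u,w_1,w_2$, in $\Gamma$ we have $\mathrm{Lk}(v,\Gamma)=V(\Lambda)\setminus\{u,w_1,w_2\}=:A$, so $\mathrm{St}(v,\Gamma)=\{v\}\cup A$ and the vertices of $\Gamma$ outside $\mathrm{St}(v,\Gamma)$ are precisely $u,w_1,w_2$. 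I would then form the double $D:=D_v(\Gamma)=\Gamma\cup_{\mathrm{St}(v,\Gamma)}\Gamma'$, with $u',w_1',w_2'\in\Gamma'$ the copies of $u,w_1,w_2$.

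The key step is to show that $S:=A\cup\{u',w_1,w_2\}$ induces in $D$ a full subgraph isomorphic to $\Lambda^c$, via the bijection $\phi\colon V(\Lambda)\to S$ that is the identity on $A$ and sends $u\mapsto u'$, $w_1\mapsto w_1$, $w_2\mapsto w_2$. The one elementary input needed is that the $(\pm)$-construction changes the adjacency status of no pair of vertices of $V(\Lambda)$ except possibly $\{u,w_1\}$ and $\{u,w_2\}$ (the only other edges it touches are incident to the new vertex $v$); equivalently, for every pair $\{x,y\}\subseteq V(\Lambda)$ other than $\{u,w_1\}$ and $\{u,w_2\}$ one has $\{x,y\}\in E(\Gamma)\Leftrightarrow\{x,y\}\in E(\Lambda^c)$. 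Granting this, I would verify that $\phi$ preserves edges and non-edges by running through three families of pairs: (a) pairs inside $A\cup\{w_1,w_2\}$ --- these lie in the first copy of $\Gamma$ inside $D$ (with $A\subseteq\mathrm{St}(v,\Gamma)$), so their $D$-adjacency is their $\Gamma$-adjacency, hence their $\Lambda^c$-adjacency; (b) pairs $\{a,u\}$ with $a\in A$, sent to $\{a,u'\}$ with $a\in\mathrm{St}(v,\Gamma)$ and $u'\in\Gamma'$, so $\{a,u'\}\in E(D)\Leftrightarrow\{a,u\}\in E(\Gamma)\Leftrightarrow\{a,u\}\in E(\Lambda^c)$; and (c) the pairs $\{u,w_i\}$ for $i=1,2$, sent to $\{u',w_i\}$, which is a non-edge of $D$ by Lemma~\ref{Obvious}(2) (applied with $w_i\in V(\Gamma)\setminus V(\mathrm{St}(v,\Gamma))$ and $u'\in V(\Gamma')\setminus V(\mathrm{St}(v',\Gamma'))$), and $\{u,w_i\}$ is likewise a non-edge of $\Lambda^c$ because $w_i\in\mathrm{Lk}(u,\Lambda)$. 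This gives $\Lambda^c\cong D[S]\leq D_v(\Gamma)$, whence
$$G(\Lambda)=A(\Lambda^c)\hookrightarrow A(D_v(\Gamma))\hookrightarrow A(\Gamma)=G(\Lambda_u^{\epsilon}),$$
the final embedding being Theorem~\ref{Kim-Koberda}(1).

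This is essentially a bookkeeping argument once the full subgraph $D[S]$ is identified, so I do not expect a substantive obstacle. The only mildly subtle point is family (c): deleting $[u,w_i]$ from $\Lambda$ produces a spurious edge $[u,w_i]$ in $\Gamma=(\Lambda_u^{\epsilon})^c$ that is absent from $\Lambda^c$, and this is repaired by replacing $u$ with its double $u'$, since a pair of vertices lying outside the doubled star in different copies is never adjacent in $D_v(\Gamma)$ (Lemma~\ref{Obvious}(2)). The remainder is just the verification that the $(\pm)$-construction leaves every other adjacency among $V(\Lambda)$ unchanged.
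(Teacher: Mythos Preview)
Your proposal is correct and follows essentially the same approach as the paper: both form the double $D_v\bigl((\Lambda_u^{\epsilon})^c\bigr)$ along the star of the new vertex $v$, take the full subgraph induced by $(V(\Lambda)\setminus\{u\})\cup\{u'\}$, and verify that the bijection sending $u\mapsto u'$ (identity elsewhere) is an isomorphism onto $\Lambda^c$, whence Theorem~\ref{Kim-Koberda}(1) finishes. Your case analysis (a)--(c) is in fact more explicit than the paper's, which refers the reader back to the analogous verification in Claim~\ref{Subdiv}.
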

\begin{proof}
For simplicity, we assume $\epsilon = -$ (the case $\epsilon = +$ can be treated similarly). 
Let $\Lambda_u^{-c} := (\Lambda_u^{-})^c$ be the complement of $\Lambda_u^{-}$, and consider the double $D_v(\Lambda_u^{-c})$ of $\Lambda_u^{-c}$ along $\mathrm{St}(v, \Lambda_u^{-c})$, where $v$ is the vertex of $\Lambda_u^{-c}$ corresponding to the new vertex $v$ of $\Lambda_u^{-}$. 
We show that the complement $\Lambda^c$ of $\Lambda$ can be embedded into $D_v(\Lambda_u^{-c})$ as a full subgraph, and hence $A(\Lambda^c) \hookrightarrow A(D_v(\Lambda_u^{-c}))$. 
Then we obtain the desired embedding $G(\Lambda) = A(\Lambda^c) \hookrightarrow A(\Lambda_u^{-c}) = G(\Lambda_u^{-})$ by Theorem \ref{Kim-Koberda}(1). 
To construct an embedding of $\Lambda^c$ into $D_v(\Lambda_u^{-c})$, observe that 
$$V(\mathrm{St}(v, \Lambda_u^{-c})) = \{ v \} \sqcup V(\Lambda_1) .$$
Let $\Delta$ be the full subgraph of $D_v(\Lambda_u^{-c})$, induced by $V(\Delta):= (V(\Lambda^c) \setminus \{ u \}) \sqcup \{ u' \}$, where $u'$ is the copy of $u$ in the copy of $\Lambda_u^{-c}$, $(\Lambda_u^{-c})'$. 
Since $V(\Lambda^c) = (V(\Lambda^c) \setminus \{ u \}) \sqcup \{ u \}$, there exists a natural bijection $\phi: V(\Lambda^c) \rightarrow V(\Delta)$. 
The following (i) and (ii) imply that $\phi$ induces an isomorphism from $\Lambda^c$ onto $\Delta$, completing the proof. 
\begin{enumerate}
 \item[(i)] The restriction of $\phi$ to $\Lambda^c[V(\Lambda^c) \setminus \{ u \}]$ is an isomorphism. 
 \item[(ii)] For each $x \in V(\Lambda^c) \setminus \{ u \}$, $[x, u]$ is an edge of $\Lambda^c$ if and only if $[x, u']$ is an edge of $\Delta$. 
\end{enumerate}
The assertions (i) and (ii) can be proved by an argument similar to the proof of Claim \ref{Subdiv} (see Figure \ref{h}). 
\begin{figure}
\centering
\includegraphics[scale=0.22,clip]{./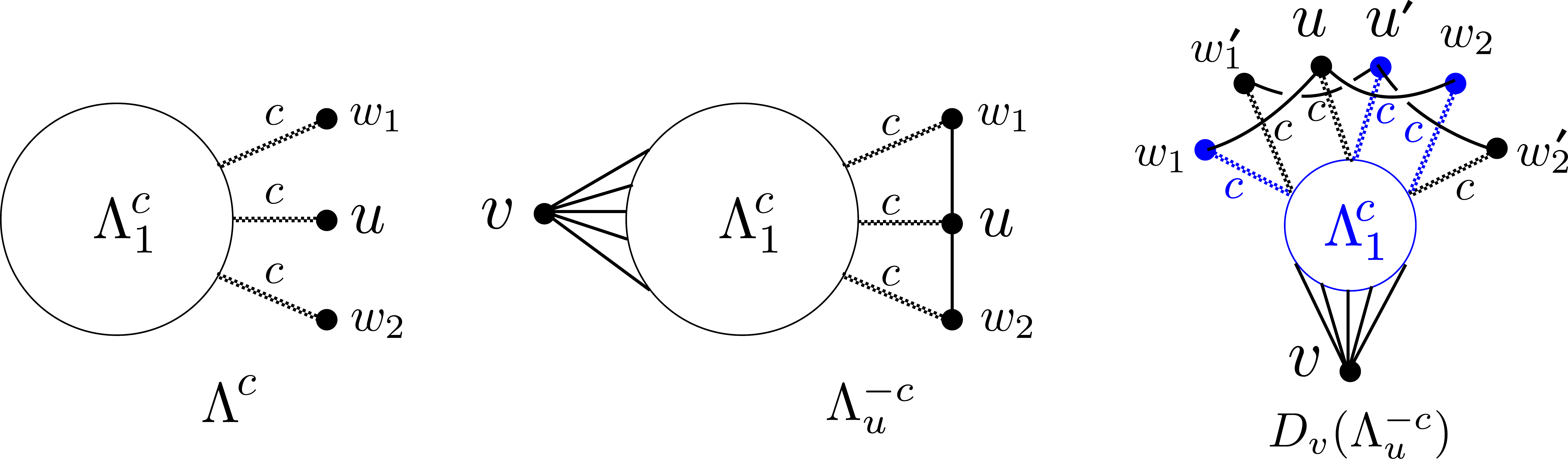}
\caption{
Mosaic lines with the characters $c$ represent possible edges in the complement graphs. 
The left picture illustrates the complement $\Lambda^c$ of $\Lambda$. 
The centred picture illustrates the complement $\Lambda_u^{-c}$ of $\Lambda_u^{-}$. 
The right picture illustrates the double $D_v(\Lambda_u^{-c})$ of $\Lambda_u^{-c}$, along the star $\mathrm{St}(v, \Lambda_u^{-c}) = v * {\Lambda_1^c}$. 
The domain, $D_v(\Lambda_u^{-c})[V(\Lambda_1^c) \sqcup \{ w_1, u', w_2 \}] = D_v(\Lambda_u^{-c})[(V(\Lambda^c) \setminus \{ u \})  \sqcup \{ u' \}]$, illustrates $\Delta$.  
}
\label{h}
\end{figure}
\end{proof}

For a finite graph $\Gamma$ and a natural number $n$, set $V_n(\Gamma): = \{ v \in V(\Gamma) | \ \mathrm{deg} (v, \Gamma) = n \}$.  

\begin{proposition}
If the inequality $\mathrm{deg}_{\mathrm{max}}(\Lambda) \geq 4$ holds, then there exists a finite graph $\Gamma$ such that $G(\Lambda) \hookrightarrow G(\Gamma)$, though $\Lambda \not\leq \Gamma$. 
\label{Anti-deg-4}
\end{proposition}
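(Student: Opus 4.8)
The plan is to repeatedly apply the $(-)$-construction of Definition \ref{plus-minus} to vertices of large degree until we reach a graph all of whose vertices have degree at most $3$, and then to invoke the elementary fact that a vertex of a full subgraph has degree no larger than its degree in the ambient graph.

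First I would introduce the nonnegative integer $\Phi(\Lambda) := \sum_{x \in V(\Lambda)} \max\{\mathrm{deg}(x, \Lambda) - 3, \ 0\}$, which measures how far $\Lambda$ is from satisfying $\mathrm{deg}_{\mathrm{max}} \leq 3$; note $\Phi(\Lambda) > 0$ precisely when $\Lambda$ has a vertex of degree $\geq 4$. Suppose $\mathrm{deg}_{\mathrm{max}}(\Lambda) \geq 4$. Pick a vertex $u$ with $\mathrm{deg}(u, \Lambda) \geq 4$ and two distinct vertices $w_1, w_2 \in \mathrm{Lk}(u, \Lambda)$ (which exist since $\mathrm{deg}(u, \Lambda) \geq 4 \geq 2$), and form $\Lambda_u^{-}(w_1, w_2)$. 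By Lemma \ref{Heteromorphic_lemma} we have $G(\Lambda) \hookrightarrow G(\Lambda_u^{-})$, and by Lemma \ref{Computation_anti_degree}(1) the degree of $u$ drops by one, the new vertex $v$ has degree $3$, and all other degrees are unchanged; hence $\Phi(\Lambda_u^{-}) = \Phi(\Lambda) - 1$.

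Iterating this step produces a finite chain $\Lambda = \Lambda^{(0)}, \Lambda^{(1)}, \ldots, \Lambda^{(N)} =: \Gamma$ with $N = \Phi(\Lambda)$, where at each stage the $(-)$-construction is applied at a vertex of degree $\geq 4$ (such a vertex still has degree $\geq 2$, so the construction is defined). Then $G(\Lambda) \hookrightarrow G(\Lambda^{(1)}) \hookrightarrow \cdots \hookrightarrow G(\Gamma)$, and $\Phi(\Gamma) = 0$, i.e. $\mathrm{deg}_{\mathrm{max}}(\Gamma) \leq 3$. Finally, if $\Lambda$ were isomorphic to a full subgraph $\Gamma[V']$ of $\Gamma$, then a vertex $x \in V(\Lambda)$ with $\mathrm{deg}(x, \Lambda) = \mathrm{deg}_{\mathrm{max}}(\Lambda) \geq 4$ would correspond under the isomorphism to a vertex of $\Gamma$ of degree $\geq \mathrm{deg}(x,\Gamma[V']) = \mathrm{deg}(x,\Lambda) \geq 4$, contradicting $\mathrm{deg}_{\mathrm{max}}(\Gamma) \leq 3$. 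Hence $\Lambda \not\leq \Gamma$, which completes the proof.

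This argument is essentially routine given the lemmas already established; the only points needing (minor) care are verifying that each successive $(-)$-construction is still applicable — the active vertex retains degree $\geq 2$ throughout — and that the $(-)$-construction never raises any vertex degree, so that the procedure genuinely terminates at maximum degree $3$ rather than cycling. I do not expect any substantive obstacle beyond this bookkeeping.
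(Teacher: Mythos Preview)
Your proof is correct, but it takes a different route from the paper's. The paper applies the $(-)$-construction \emph{once} to a vertex $u$ of maximum degree $n\ge 4$, obtaining $\Gamma=\Lambda_u^{-}$, and then argues that $\Lambda\not\le\Lambda_u^{-}$ by a counting argument: Lemma~\ref{Computation_anti_degree}(1) gives $|V_n(\Lambda_u^{-})|=|V_n(\Lambda)|-1$, while any full subgraph $\Lambda_u^{-}[V']\cong\Lambda$ would force $V_n(\Lambda_u^{-}[V'])\subset V_n(\Lambda_u^{-})$ (since $\mathrm{deg}_{\mathrm{max}}(\Lambda_u^{-})\le n$), a contradiction. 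By contrast, you iterate the $(-)$-construction until the maximum degree drops to $3$, after which the non-embedding is immediate from the crude degree bound. Your approach trades the $|V_n|$ counting for a termination argument via the potential $\Phi$; it is in fact exactly the mechanism the paper later uses in Proposition~\ref{Anti-degree-3} to reduce trees to maximum degree $3$. The paper's single-step argument produces a smaller witness graph $\Gamma$ (with $|\Gamma|=|\Lambda|+1$), whereas yours gives a $\Gamma$ with $|\Gamma|=|\Lambda|+\Phi(\Lambda)$ but with the stronger structural property $\mathrm{deg}_{\mathrm{max}}(\Gamma)\le 3$.
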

\begin{proof}
Pick a vertex $u$ of $\Lambda$ with $\mathrm{deg}(u, \Lambda) = \mathrm{deg}_{\mathrm{max}}(\Lambda)\geq 4$ and two vertices $w_1, w_2$ from $\mathrm{Lk}(u, \Lambda)$. 
Then $G(\Lambda) \hookrightarrow G(\Lambda_u^{-})$ by Lemma \ref{Heteromorphic_lemma}, where $\Lambda_u^{-} = \Lambda_u^{-}(w_1, w_2)$. 
However, we can see that $\Lambda \not\leq \Lambda_u^{-}$ as follows. 
Suppose on the contrary that $\Lambda \leq \Lambda_u^{-}$. 
Then there exists a vertex subset $V' \subset V(\Lambda_u^{-})$ such that $\Lambda_u^{-}[V'] \cong \Lambda$. 
Let $n \geq 4$ be the degree of $u$ in $\Lambda$. 
Then, Lemma \ref{Computation_anti_degree}(1) implies $|V_n(\Lambda_u^{-})| = |V_n(\Lambda)| - 1$. 
We now claim that $V_n (\Lambda_u^{-}[V']) \subset V_n(\Lambda_u^{-})$. 
To see this, pick a vertex $x$ of $V_n (\Lambda_u^{-}[V'])$. 
Then Lemma \ref{Computation_anti_degree}(1) implies  
$$n = \mathrm{deg}(x, \Lambda_u^{-}[V']) \leq \mathrm{deg}(x, \Lambda_u^{-}) \leq \mathrm{deg}_{\mathrm{max}}(\Lambda_u^{-}) \leq \mathrm{deg}_{\mathrm{max}}(\Lambda) = n ,$$
 and so $x \in V_n(\Lambda_u^{-})$. 
Therefore, we obtain $V_n (\Lambda_u^{-}[V']) \subset V_n(\Lambda_u^{-})$ and so 
$$ |V_n(\Lambda_u^{-}[V'])| \leq |V_n(\Lambda_u^{-})| = |V_n(\Lambda)| - 1 .$$ 
This contradicts $\Lambda_u^{-}[V'] \cong \Lambda$. 
Thus we have $\Lambda \not\leq \Lambda_u^{-}$ . 
\end{proof}

\begin{proposition}
Suppose $\mathrm{deg}_{\mathrm{max}}(\Lambda) = 3$ and that for each vertex $x \in V_3(\Lambda)$, $ \mathrm{Lk}(x, \Lambda)$ does not contain an edge, i.e., $\mathrm{Lk}(x, \Lambda) \cong K_3^c$. 
Then there exists a finite graph $\Gamma$ such that $G(\Lambda) \hookrightarrow G(\Gamma)$, though $\Lambda \not\leq \Gamma$. 
\label{Anti-deg-3-complete}
\end{proposition}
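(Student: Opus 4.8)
The plan is to exhibit, for such a $\Lambda$, a finite graph $\Gamma$ with $G(\Lambda)\hookrightarrow G(\Gamma)$ but $\Lambda\not\leq\Gamma$; the embedding will always be supplied by Theorem \ref{Kim-Koberda}(1) applied to a double, and the non-embedding by a degree-theoretic invariant. The first remark is that the invariant of Proposition \ref{Anti-deg-4} — the number of vertices of maximal degree — is useless here: in $\Lambda_{u}^{-}$ the new vertex $v$ already has degree $3=\mathrm{deg}_{\mathrm{max}}(\Lambda)$ and, by our hypothesis on links, $\mathrm{Lk}(v,\Lambda_{u}^{-})\cong K_3^{c}$, exactly like the link of every other trivalent vertex. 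Hence $\Lambda_{u}^{-}$ again satisfies the hypotheses of the Proposition and every count that is ``local at trivalent vertices'' is preserved; neither a single $(-)$-construction nor a subdivision will separate $\Lambda$ from $\Gamma$, so a genuinely global choice of $\Gamma$ is forced.

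I would distinguish two cases according to whether $\Lambda$ has a trivalent vertex all three of whose neighbours are trivalent. Suppose first that $u^{*}\in V_3(\Lambda)$ satisfies $\mathrm{Lk}(u^{*},\Lambda)\subseteq V_3(\Lambda)$. Perform the $(-)$-construction once at each vertex of $V_3(\Lambda)$ in turn. Each application preserves $\mathrm{deg}_{\mathrm{max}}=3$ and the edge-free-link hypothesis, leaves the degree of every non-new vertex unchanged, and drops the processed vertex to degree $2$; so the final graph $R$ has $V_3(R)=\{v_1,\dots,v_k\}$, the newly created vertices, and each $v_j$ is adjacent in $R$ to the (now bivalent) vertex it was split off from. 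Iterating Lemma \ref{Heteromorphic_lemma} gives $G(\Lambda)\hookrightarrow G(R)$, so $\Gamma:=R$ works for the embedding. If $\Lambda\leq R$, then under an identification of $\Lambda$ with a full subgraph of $R$ the vertex $u^{*}$ becomes a vertex of $R$ of degree $3$ (as $\mathrm{deg}_{\mathrm{max}}(R)=3$) whose three neighbours are again vertices of $R$ of degree $3$ (the neighbours of $u^{*}$ in $\Lambda$ being trivalent, and the degree of a vertex in $R$ being at least its degree in any induced subgraph). But no trivalent vertex of $R$ has three trivalent neighbours, since each $v_j$ has the bivalent neighbour it was split off from; contradiction, so $\Lambda\not\leq\Gamma$.

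In the remaining case every $u\in V_3(\Lambda)$ has a neighbour of degree $\le 2$ — in particular this is the only case when $\Lambda=K_{1,3}$ is the claw. Here I would construct $\Gamma$ by hand as a ``reverse double''. Fix a trivalent $u$ with $\mathrm{Lk}(u,\Lambda)=\{w_1,w_2,w_3\}$; this triple is independent in $\Lambda$, so spans a triangle in $\Lambda^{c}$. One modifies $\Lambda^{c}$ near this triangle and attaches one new vertex $v$, obtaining $\Gamma^{c}$, so that $D_v(\Gamma^{c})$ contains a full copy of $\Lambda^{c}$: the mechanism is that, by Lemma \ref{Obvious}(2), in $D_v(\Gamma^{c})$ a copy $u'$ of $u$ becomes non-adjacent to the whole triangle $\{w_1,w_2,w_3\}$, so $\{w_1,w_2,w_3,u'\}$ induces $K_3\sqcup K_1=K_{1,3}^{c}$ and, with the right choice, a whole full $\Lambda^{c}$ reappears; meanwhile $\Gamma=(\Gamma^{c})^{c}$ is arranged to contain no full copy of $\Lambda$. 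For $\Lambda=K_{1,3}$ this is realised by the $5$-vertex graph $\Gamma$ on $\{v,x,y,z,u\}$ with $E(\Gamma)=\{[v,y],[v,z],[v,u],[u,x],[u,y]\}$: the only vertices of degree $3$ are $v$ and $u$, and the edges $[u,y],[v,y]\in E(\Gamma)$ respectively destroy the only possible claws, so $\Gamma$ is claw-free and $K_{1,3}\not\leq\Gamma$; on the other hand $\mathrm{St}(v,\Gamma^{c})=\{v,x\}$ and $\{x,y,z,u'\}$ induces $K_3\sqcup K_1=K_{1,3}^{c}$ in $D_v(\Gamma^{c})$, whence $G(K_{1,3})=A(K_{1,3}^{c})\hookrightarrow A(D_v(\Gamma^{c}))\hookrightarrow A(\Gamma^{c})=G(\Gamma)$ by Theorem \ref{Kim-Koberda}(1).

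The hard part is this last case. Every graph reachable from $\Lambda$ by $(\pm)$-constructions or subdivisions still has a trivalent vertex with edge-free link, hence an induced $K_{1,3}$; so once $\Lambda$ is, or merely contains, a claw an entirely different $\Gamma$ must be produced, and the real content of the argument is the uniform construction of $\Gamma^{c}$ for arbitrary $\Lambda$ satisfying the hypothesis, together with the simultaneous verification that $\Lambda^{c}\leq D_v(\Gamma^{c})$ and $\Lambda\not\leq\Gamma$.
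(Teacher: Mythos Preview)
Your Case A argument---iterating the $(-)$-construction over all of $V_3(\Lambda)$ and then using the invariant ``some trivalent vertex has all three neighbours trivalent''---is correct, and in fact handles a strictly larger set of graphs than the paper's Case 2 (the paper assumes \emph{every} trivalent vertex has this property, you only need one). It is also arguably cleaner than the paper's vertex-deletion analysis there.

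Case B, however, is a genuine gap: you treat only $\Lambda=K_{1,3}$ and explicitly concede that the general construction is left undone. The error is in your opening paragraph. You correctly note that a single $(-)$-construction preserves $|V_3^e|$ (where $V_3^e(\Gamma)=\{x\in V_3(\Gamma):\mathrm{Lk}(x,\Gamma)\cong K_3^c\}$), and from this you conclude that ``a genuinely global choice of $\Gamma$ is forced''. But you never examine the $(+)$-construction, and that is precisely what works. Take $\Gamma=\Lambda_u^{+}(w_1,w_2)$ for any trivalent $u$; by your Case B hypothesis one may choose $w_2$ with $\deg(w_2,\Lambda)\le 2$, so $\deg_{\max}(\Lambda_u^{+})=3$. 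In $\Lambda_u^{+}$ the edge $[u,w_2]$ is kept and $[v,u],[v,w_2]$ are added, so $\mathrm{Lk}(u,\Lambda_u^{+})$ contains the edge $[v,w_2]$ and $\mathrm{Lk}(v,\Lambda_u^{+})$ contains the edge $[u,w_2]$; hence $u,v\notin V_3^e(\Lambda_u^{+})$. If $w_2$ has become trivalent, its link contains the edge $[u,v]$, so $w_2\notin V_3^e(\Lambda_u^{+})$ either. Thus $V_3^e(\Lambda_u^{+})\subseteq V_3^e(\Lambda)\setminus\{u\}$, giving $|V_3^e(\Lambda_u^{+})|\le|V_3^e(\Lambda)|-1$. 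Since $\deg_{\max}(\Lambda_u^{+})=3$, passing to any full subgraph can only shrink $V_3^e$ further, so $\Lambda\not\le\Lambda_u^{+}$; the embedding $G(\Lambda)\hookrightarrow G(\Lambda_u^{+})$ is Lemma \ref{Heteromorphic_lemma}.

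So a single local move does suffice, and the reverse-double idea and the hand-built $5$-vertex example are unnecessary. (Incidentally, your claw example is itself an instance of the $(+)$-construction: your $\Gamma$ is exactly $(K_{1,3})_u^{+}(z,y)$ with the centre $u$ and leaves $x,y,z$.)
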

\begin{proof}
Pick a vertex $u$ of $\Lambda$, of degree $3$ and two vertices $w_1, w_2$ of $\mathrm{Lk}(u, \Lambda)$ and consider $\Lambda_u^{+} = \Lambda_u^{+}(w_1, w_2)$. 
By Lemma \ref{Heteromorphic_lemma}, we have $G(\Lambda) \hookrightarrow G(\Lambda_u^{+})$. 
So we have only to show $\Lambda \not\leq \Lambda_u^{+}$. 
Suppose, to the contrary, that $\Lambda \leq \Lambda_u^{+}$, namely, there exists a subset $V' \subset V(\Lambda_u^{+})$ such that $\Lambda_u^{+}[V'] \cong \Lambda$. 
Since $|\Lambda| = |\Lambda_u^{+}| - 1$, there exists a vertex $v' \in V(\Lambda_u^{+})$ such that $V' = V(\Lambda_u^{+}) \setminus \{ v' \}$.  

{\bf Case 1.} Suppose that we can choose the above $u, w_1, w_2$ so that $\mathrm{deg}(w_2, \Lambda) \leq 2$. 
To treat this case we prepare a notation. 
For a graph $\Gamma$, set $V_3^e(\Gamma):= \{ v \in V_3(\Gamma) | \ \mathrm{Lk}(v, \Gamma) = K_3^c \}$. 
Note that the assumption of Proposition \ref{Anti-deg-3-complete} implies $V_3^e(\Lambda) = V_3 (\Lambda)$. 

\begin{claim}
The following hold. 
\begin{enumerate}
\item[(i)] $V_3^e (\Lambda_u^{+}) \subset V_3^e (\Lambda) \setminus \{ u \} = V_3 (\Lambda) \setminus \{ u \}$. 
\item[(ii)] $V_3^e(\Lambda_u^{+}[V']) \subset  V_3^e(\Lambda_u^{+})$. 
\end{enumerate}
\label{empty_number}
\end{claim}
\begin{proof}[Proof of Claim \ref{empty_number}.] 
(i) Pick a vertex $x$ of $V_3^e (\Lambda_u^{+})$. 
It obviously follows from the definition of $\Lambda_u^{+}$ (cf. Figure \ref{b}) that $u, v, w_2 \notin V_3^e(\Lambda_u^{+})$, and so $x \in V(\Lambda_u^{+}) \setminus \{ u, v, w_2 \}$.  
Then by Lemma \ref{Computation_anti_degree}(2), we have $\mathrm{deg}(x, \Lambda) = \mathrm{deg}(x, \Lambda_u^{+}) = 3$, which implies that $x \in V_3(\Lambda)$, and so $x \in V_3(\Lambda) \setminus \{ u \}$.  

(ii) Pick a vertex $x$ of $V_3^e(\Lambda_u^{+}[V'])$. 
If $x \in V(\mathrm{Lk}(v', \Lambda_u^{+}))$, where $v'$ is the removed vertex, then $\mathrm{deg}(x, \Lambda_u^{+}) = 4$. 
However, by Lemma \ref{Computation_anti_degree}(2) and the assumption that $\mathrm{deg}(w_2, \Lambda) \leq 2$ and $\mathrm{deg}_{\mathrm{max}}(\Lambda) \leq 3$, there is no vertex of degree $4$ in $\Lambda_u^{+}$, a contradiction. 
Hence, $x \notin V(\mathrm{Lk}(v', \Lambda_u^{+}))$. 
In this case, we have $\mathrm{Lk}(x, \Lambda_u^{+}) = \mathrm{Lk}(x, \Lambda_u^{+}[V']) \cong K_3^c$, and hence $x \in V_3^e(\Lambda_u^{+})$. 
\end{proof}

By the above claim, $|V_3^e(\Lambda_u^{+}[V'])| \leq |V_3^e(\Lambda_u^{+})| \leq |V_3^e (\Lambda)| -1$. 
This contradicts the assumption that $\Lambda_u^{+}[V'] \cong \Lambda$.  

{\bf Case 2.} Suppose that for each $x \in V_3(\Lambda)$, every vertex of $\mathrm{Lk}(x, \Lambda)$ has degree $3$, i.e., 
\begin{enumerate}
 \item[(A)] for each $x \in V_3(\Lambda)$,  $V(\mathrm{Lk}(x, \Lambda)) \subset V_3(\Lambda)$ holds.  
\end{enumerate}

\begin{claim}
The following fold. 
\begin{enumerate}
 \item[(i)] $|V_3 (\Lambda_u^{+})| = |V_3 (\Lambda)|$. 
 \item[(ii)] $V_4(\Lambda_u^{+}) = \{ w_2 \}$ and  $V(\mathrm{Lk}(w_2, \Lambda_u^{+})) \subset V_3(\Lambda_u^{+})$. 
 \item[(iii)] For each vertex $x$ of $\mathrm{Lk}(w_2,  \Lambda_u^{+})$, the set $V(\mathrm{Lk}(x, \Lambda_u^{+}))$ consists of $w_2$ and two vertices of degree $3$ in $\Lambda_u^{+}$. 
 \label{Around-deg3}
 \end{enumerate}
\end{claim}
\begin{proof}[Proof of Claim \ref{Around-deg3}.]
(i) By Lemma \ref{Computation_anti_degree}(2) and the assumptions that $\mathrm{deg}(w_2, \Lambda)=3$, we see $v \in V_3(\Lambda_u^{+})$, $ w_2 \not\in V_3(\Lambda_u^{+})$, and $V_3(\Lambda) \setminus \{ v, w_2 \} = V_3(\Lambda_u^{+}) \setminus \{ v, w_2 \}$. 
Hence, we have $|V_3(\Lambda_u^{+})| = |V_3(\Lambda)|$. 

(ii) By Lemma \ref{Computation_anti_degree}(2) and the assumption that $\mathrm{deg}_{\mathrm{max}}(\Lambda) = 3$, we see $V_4(\Lambda_u^{+}) = \{ w_2 \}$. 
To prove $V(\mathrm{Lk}(w_2, \Lambda_u^{+})) \subset V_3(\Lambda_u^{+})$, pick a vertex $x$ of $\mathrm{Lk}(w_2, \Lambda_u^{+})$. 
If $x = v$, then $\mathrm{deg}(x, \Lambda_u^{+}) = 3$ by Lemma \ref{Computation_anti_degree}(2) and so $x \in V_3(\Lambda_u^{+})$. 
If $x \neq v$, then by Lemma \ref{Computation_anti_degree}(2) again, we have $\mathrm{deg}(x, \Lambda_u^{+}) = \mathrm{deg}(x, \Lambda)$, which is equal to $3$ by the assumption (A) and the fact that $x$ and $w_2$ are adjacent in $\Lambda$ (because $V(\mathrm{Lk}(w_2, \Lambda_u^{+})) = V(\mathrm{Lk}(w_2, \Lambda)) \sqcup \{ v \}$). 
 Hence, we have $x \in V_3(\Lambda_u^{+})$. 

(iii) Let $x$ be a vertex of $\mathrm{Lk}(w_2, \Lambda_u^{+})$. 
Then by (ii), $V(\mathrm{Lk}(x, \Lambda_u^{+}))$ consists of three vertices, one of which is $w_2$. 
Moreover, by the assumption (A) and Lemma \ref{Computation_anti_degree}(2), we can prove that each vertex of $\mathrm{Lk}(x, \Lambda_u^{+})$ different from $w_2$ has degree $3$ in $\Lambda_u^{+}$ as follows. 
Pick a vertex $y$ in $\mathrm{Lk}(x, \Lambda_u^{+})$ different from $w_2$. 
If $y$ is either $u$ or $v$, then $\mathrm{deg}(y, \Lambda_u^{+})= 3$. 
If $y = w_1$, then $\mathrm{deg}(y, \Lambda_u^{+})= \mathrm{deg}(y, \Lambda)$ by Lemma \ref{Computation_anti_degree}(2). 
This is equal to $3$ by the assumption (A) together with the fact that $u$ and $w_1$ are adjacent in $\Lambda$. 
Suppose $y \not\in \{u, v, w_1, w_2 \}$. 
Then $y$ is a vertex of the full subgraph $\Lambda_1$ in Figure \ref{b}, and so $x \neq v$. 
This implies that the edge $[x, y]$ in $\Lambda_u^{+}$ is actually an edge in $\Lambda$. 
Hence $y \in V(\mathrm{Lk}(x, \Lambda))$ and so $\mathrm{deg}(y, \Lambda) = 3$ by the assumption (A).  
Thus, by Lemma \ref{Computation_anti_degree}(2), $\mathrm{deg}(y, \Lambda_u^{+}) = \mathrm{deg}(y, \Lambda) = 3$. 
\end{proof}

Since $\mathrm{deg}(w_2, \Lambda_u^{+}) = 4$ (Claim \ref{Around-deg3}(ii)) and since $\mathrm{deg}_{\mathrm{max}}(\Lambda) = 3$ (the assumption of Proposition \ref{Anti-deg-3-complete}), the removed vertex $v'$ must be contained in $V(\mathrm{St}(w_2, \Lambda_u^{+})) = \{ w_2 \} \sqcup V(\mathrm{Lk}(w_2, \Lambda_u^{+}))$. 

Suppose that $v' = w_2$. 
Then for any $x \in V(\mathrm{Lk}(w_2, \Lambda_u^{+}))$, we have 
$$V(\mathrm{Lk}(x, \Lambda_u^{+}[V'])) \subsetneq V(\mathrm{Lk}(x, \Lambda_u^{+})),$$ 
and so $x \notin V_3(\Lambda_u^{+}[V'])$. 
On the other hand, $V(\mathrm{Lk}(w_2, \Lambda_u^{+})) \subset V_3(\Lambda_u^{+})$, by Claim \ref{Around-deg3}(ii). 
Hence, we have
\begin{align*}
|V_3(\Lambda_u^{+}[V'])| & \leq |V_3(\Lambda_u^{+})| - |V(\mathrm{Lk}(w_2, \Lambda_u^{+}))| \\
 & = |V_3(\Lambda_u^{+})| - 4 \\
 & = |V_3(\Lambda)| - 4 \ \ (\mbox{by Claim \ref{Around-deg3}(i)})
\end{align*}
This contradicts the assumption $\Lambda \cong \Lambda_u^{+}[V']$. 

Suppose $v' \in  V(\mathrm{Lk}(w_2, \Lambda_u^{+}))$. 
By Claim \ref{Around-deg3}(iii), we have 
$V(\mathrm{Lk}(v', \Lambda_u^{+})) = \{ w_2, x_1, x_2 \},$
where $x_1, x_2$ are elements of $V_3(\Lambda_u^{+})$. 
By Claim \ref{Around-deg3}(ii), we see 
$$ w_2 \notin V_3(\Lambda_u^{+}) \ \mbox{but} \ w_2 \in V_3(\Lambda_u^{+}[V']). $$
We can also see that $x_1, x_2 \notin V_3(\Lambda_u^{+}[V'])$. 
Moreover, since removing $v'$ does not decrease the degrees of the vertices of $V_3(\Lambda_u^{+}) \setminus \{ v', w_2, x_1, x_2 \}$, we can see   $V_3(\Lambda_u^{+}[V']) \setminus \{ v', w_2, x_1, x_2 \} = V_3(\Lambda_u^{+})  \setminus \{ v', w_2, x_1, x_2 \}$. 
Hence, we obtain 
$$ |V_3(\Lambda_u^{+}[V'])| = |V_3(\Lambda_u^{+})| +1 -3 ,$$ 
which is in turn equal to $|V_3(\Lambda)| - 2$ by Claim \ref{Around-deg3}(i). 
This again contradicts the assumption that $\Lambda_u^{+}[V'] \cong \Lambda$. 
\end{proof}

\begin{proposition}
Suppose $\mathrm{deg}_{\mathrm{max}}(\Lambda) = 3$ and that there exists a vertex $u \in V_3(\Lambda)$ such that $\mathrm{Lk}(u, \Lambda)$ contains an edge, then there exists a finite graph $\Gamma$ such that $G(\Lambda) \hookrightarrow G(\Gamma)$, though $\Lambda \not\leq \Gamma$. 
\label{Anti-deg-3-incomplete}
\end{proposition}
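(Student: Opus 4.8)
The plan is to reduce this final case to the $(\pm)$-constructions already developed, mimicking the strategy of Propositions \ref{Anti-deg-4} and \ref{Anti-deg-3-complete}: produce from $\Lambda$ a strictly larger graph $\Lambda'$ with $G(\Lambda)\hookrightarrow G(\Lambda')$ such that a counting invariant rules out $\Lambda\leq\Lambda'$. By hypothesis there is a vertex $u\in V_3(\Lambda)$ whose link $\mathrm{Lk}(u,\Lambda)$ contains an edge, say $[w_1,w_2]\in E(\Lambda)$ with $w_1,w_2,w_3$ the three neighbours of $u$. First I would apply the $(-)$-construction at $u$ with this pair $w_1,w_2$, forming $\Lambda_u^{-}=\Lambda_u^{-}(w_1,w_2)$; by Lemma \ref{Heteromorphic_lemma} we get $G(\Lambda)\hookrightarrow G(\Lambda_u^{-})$, and by Lemma \ref{Computation_anti_degree}(1) the new vertex $v$ has degree $3$, the vertex $u$ drops to degree $2$, and all other degrees are unchanged, so $\mathrm{deg}_{\mathrm{max}}(\Lambda_u^{-})=3$ still. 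The key geometric point is that in $\Lambda_u^{-}$ the new degree-$3$ vertex $v$ is adjacent to $w_1,w_2,u$, and since $[w_1,w_2]$ is still an edge, $\mathrm{Lk}(v,\Lambda_u^{-})$ contains the edge $[w_1,w_2]$, i.e.\ $v\notin V_3^e(\Lambda_u^{-})$; meanwhile $u$ is no longer in $V_3(\Lambda_u^{-})$ at all.

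The counting I expect to work is with the invariant $V_3^e(\cdot)$ (degree-$3$ vertices with edgeless link) used in Proposition \ref{Anti-deg-3-complete}, but run in the opposite direction — or possibly its complement, the set $V_3(\cdot)\setminus V_3^e(\cdot)$ of degree-$3$ vertices whose link contains an edge. The idea: the $(-)$-construction removes $u$ from $V_3$ but adds $v$, which lies in $V_3\setminus V_3^e$ (its link has the edge $[w_1,w_2]$), so the two invariants $|V_3^e|$ and $|V_3\setminus V_3^e|$ change in a controlled way. I would then argue, exactly as in the earlier propositions, that if $\Lambda\leq\Lambda_u^{-}$, say $\Lambda_u^{-}[V']\cong\Lambda$ with $V'=V(\Lambda_u^{-})\setminus\{v'\}$ for a single removed vertex $v'$, then a degree/link comparison forces $V_3^e(\Lambda_u^{-}[V'])\subset V_3^e(\Lambda_u^{-})$ (a degree-$3$ edgeless-link vertex of a full subgraph that keeps degree $3$ must already have had an edgeless link in $\Lambda_u^{-}$, and a vertex adjacent to $v'$ would have had degree $4$, impossible since $\mathrm{deg}_{\mathrm{max}}(\Lambda_u^{-})=3$). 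This yields $|V_3^e(\Lambda)|=|V_3^e(\Lambda_u^{-}[V'])|\leq|V_3^e(\Lambda_u^{-})|$, and I would then check from Lemma \ref{Computation_anti_degree}(1) and the local picture at $u,v,w_1,w_2$ that $|V_3^e(\Lambda_u^{-})|<|V_3^e(\Lambda)|$ — because $u$ left $V_3$ entirely while the replacement $v$ fails to be in $V_3^e$ — giving a contradiction. A subtlety to handle: whether $w_1$ or $w_2$ changes status with respect to $V_3^e$ when we delete the edges $[u,w_1],[u,w_2]$ and add $[v,w_1],[v,w_2]$; since their degrees are unchanged and the only links affected are $\mathrm{Lk}(w_i,\cdot)$ which merely swap the neighbour $u$ for $v$, membership in $V_3^e$ is unaffected, so the bookkeeping stays clean.

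The main obstacle I anticipate is that, unlike the earlier cases, the naive invariant may not strictly decrease if $w_1$ or $w_2$ itself happens to lie in $V_3(\Lambda)$ with a link whose only edge was one incident to $u$ — in that degenerate configuration deleting $[u,w_i]$ could move $w_i$ into $V_3^e$, exactly cancelling the loss of $u$. To deal with this I would either (i) exploit the freedom in choosing the pair $\{w_1,w_2\}$ among the three neighbours of $u$ — we only need \emph{some} edge in $\mathrm{Lk}(u,\Lambda)$, so one can try to arrange $\{w_1,w_2\}$ to be an edge both of whose endpoints are ``bad'' in the right sense — or (ii) split into subcases on the local structure of $\mathrm{Lk}(u,\Lambda)$ (one edge, two edges, or a triangle $K_3$) and in the worst subcase pass instead to the $(+)$-construction $\Lambda_u^{+}(w_1,w_2)$, whose degree change (Lemma \ref{Computation_anti_degree}(2)) raises $\mathrm{deg}(w_2)$ to $4$ and therefore cannot be a full subgraph of $\Lambda$ at all by the $V_4$-counting already used in Proposition \ref{Anti-deg-4}. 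Combining these subcases with Propositions \ref{Anti-deg-2}, \ref{Anti-deg-4}, \ref{Anti-deg-3-complete} then completes the proof of Theorem \ref{Main-theorem}(2) in all remaining cases.
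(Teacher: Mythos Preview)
Your overall strategy is right, and it is the paper's: form $\Lambda_u^{-}$ and count a suitable ``link-type'' subset of $V_3(\cdot)$. But the central step of your main line is mistaken. You choose $w_1,w_2$ to be the \emph{adjacent} pair and then claim $|V_3^e(\Lambda_u^{-})|<|V_3^e(\Lambda)|$ ``because $u$ left $V_3$ entirely while the replacement $v$ fails to be in $V_3^e$.'' This does not give a decrease: by hypothesis $\mathrm{Lk}(u,\Lambda)$ contains an edge, so $u\notin V_3^e(\Lambda)$ to begin with; losing $u$ from $V_3$ is invisible to $V_3^e$. With your choice of $w_1,w_2$ the new vertex $v$ has the edge $[w_1,w_2]$ in its link, so $v\notin V_3^e$ either, and you have shown nothing. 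Worse, the third neighbour $w_3$ may lose its only link-edge (one of $[u,w_1],[u,w_2]$) and \emph{enter} $V_3^e$, so in general $|V_3^e(\Lambda_u^{-})|\geq |V_3^e(\Lambda)|$. The dual invariant $V_3\setminus V_3^e$ fares no better with your choice: $u$ leaves it but $v$ enters it, so there is no strict drop. Your fallback~(ii) is also off: creating a degree-$4$ vertex in $\Lambda_u^{+}$ does not by itself prevent $\Lambda$ (of maximum degree $3$) from sitting inside $\Lambda_u^{+}$ as a full subgraph; that is exactly the situation of Proposition~\ref{Anti-deg-3-complete}, which required substantial extra work.

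The paper repairs this with a single twist you did not consider: split on whether $\mathrm{Lk}(u,\Lambda)$ is complete. If it is not complete, choose $w_1,w_2$ to be a \emph{non-adjacent} pair in $\mathrm{Lk}(u,\Lambda)$; then $\mathrm{Lk}(v,\Lambda_u^{-})=\{u,w_1,w_2\}$ has no edges, so $v\notin V_3^*:=V_3\setminus V_3^e$, while $u\in V_3^*(\Lambda)$ by hypothesis, and Claim~\ref{the-number-edges}(iii) (link-edges of other degree-$3$ vertices can only decrease) gives $|V_3^*(\Lambda_u^{-})|\leq |V_3^*(\Lambda)|-1$. If $\mathrm{Lk}(u,\Lambda)\cong K_3$, no non-adjacent pair exists; instead count $V_3^k(\cdot)=\{x\in V_3:\mathrm{Lk}(x)\cong K_3\}$. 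Now $u\in V_3^k(\Lambda)$, while $v$'s link carries only the single edge $[w_1,w_2]$, so $v\notin V_3^k(\Lambda_u^{-})$, and again the other links only lose edges, yielding $|V_3^k(\Lambda_u^{-})|\leq |V_3^k(\Lambda)|-1$. In both cases one then checks, exactly as you outlined, that passing to a full subgraph $\Lambda_u^{-}[V']$ cannot increase these counts (using $\mathrm{deg}_{\mathrm{max}}(\Lambda_u^{-})=3$), giving the contradiction.
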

\begin{proof}
Pick a vertex $u \in V_3(\Lambda)$ such that the link $\mathrm{Lk}(u, \Lambda)$ contains an edge, and two vertices $w_1, w_2$ of $\mathrm{Lk}(u, \Lambda)$ with $[w_1, w_2] \in E(\Lambda)$. 
Consider $\Lambda_u^{-} = \Lambda_u^{-}(w_1, w_2)$. 
Then we have $G(\Lambda) \hookrightarrow G(\Lambda_u^{-})$ by Lemma \ref{Heteromorphic_lemma}. 
So we have only to prove $\Lambda \not\leq \Lambda_u^{-}$. 
To this end, we first see the following. 
\begin{claim} 
The following hold. 
\begin{enumerate}
 \item[(i)] $| \mathrm{Lk}(u, \Lambda_u^{-})| = 2$. 
 \item[(ii)] $E(\mathrm{Lk}(v, \Lambda_u^{-})) = \{ [w_1, w_2] \} $.  
 \item[(iii)] For each element $x$ of  $V_3(\Lambda_u^{-}) \setminus \{ u, v \}$, we have $ x \in V_3(\Lambda)$ and 
 $$ \ |E(\mathrm{Lk}(x, \Lambda_u^{-}))| \leq |E(\mathrm{Lk}(x, \Lambda))| .$$  
\label{the-number-edges}
\end{enumerate}
\end{claim}
\begin{proof}[Proof of Claim \ref{the-number-edges}.]
(i) Use the assumption that $\mathrm{deg}(u, \Lambda) = 3$ and Lemma \ref{Computation_anti_degree}(1). 

(ii) Note that $\mathrm{Lk}(v, \Lambda_u^{-}) = \{ u, w_1, w_2 \}$. 
By the assumption, $[w_1, w_2]$ is an edge of $\Lambda$, and so this is an edge of $\Lambda_u^{-}$. 
However, $[u, w_1], [u, w_2]$ are not edges in $\Lambda_u^{-}$. 

(iii) Let $x$ be an element of $V_3(\Lambda_u^{-}) \setminus \{ u, v \}$. 
Then by Lemma \ref{Computation_anti_degree}(1), we have $\mathrm{deg}(x, \Lambda) = 3$. 
Suppose $x = w_1$. 
Then by the assumption that $[w_1, w_2]$ is an edge in $\Lambda$, we have $V(\mathrm{Lk}(x, \Lambda)) = \{ u, w_2 , y \}$ and $V(\mathrm{Lk}(x, \Lambda_u^{-})) = \{ v, w_2, y \}$, where $y$ is a vertex of $\Lambda_1 = \Lambda [V(\Lambda) \setminus \{ w_1, u , w_2 \}]$ (see Figure \ref{b}). 
Note that $[u, w_2]$ is an edge and $[u, y], [w_2, y]$ are possible edges in the link $\mathrm{Lk}(x, \Lambda)$. 
On the other hand, $[v, w_2]$ is an edge and only $[w_2, y]$ is a possible edge ($v$ and $y$ do not span an edge) in the link $\mathrm{Lk}(x, \Lambda_u^{-})$. 
Hence, in case $x = w_1$, (iii) holds. 
The case $x = w_2$ can be treated similarly. 
Suppose that $x$ is contained in $V(\Lambda_1) = V(\Lambda_u^{-}) \setminus \{w_1, u, v, w_2 \}$. 
Then since $x$ and $v$ are not adjacent, $\mathrm{Lk}(x, \Lambda_u^{-})$ is contained in the full subgraph $\Lambda_u^{-}[V(\Lambda_u^{-}) \setminus \{ v \}]$. 
However, since $\Lambda_u^{-}[V(\Lambda_u^{-}) \setminus \{ v \}]$ is obtained from $\Lambda$ by removing the two edges $[u, w_1]$ and $[u, w_2]$, the number of the edges of $\mathrm{Lk}(x, \Lambda_u^{-})$ is not more than that of $\mathrm{Lk}(x, \Lambda)$. 
Thus, in any case, (iii) holds. 
\end{proof}

Now suppose to the contrary that $\Lambda \leq \Lambda_u^{-}$. 
Then there exists a subset $V' \subset V(\Lambda_u^{-})$ such that $\Lambda_u^{-}[V'] \cong \Lambda$. 

{\bf Case 1.} The link $\mathrm{Lk}(u, \Lambda)$ is complete. 
To treat this case we prepare a notation. 
For a graph $\Gamma$, set $V_3^k(\Gamma):= \{ v \in V_3(\Gamma) | \ \mathrm{Lk}(v, \Gamma) \cong K_3 \}$. 
We first show $|V_3^k(\Lambda_u^{-})| \leq |V_3^k(\Lambda)| - 1$ to obtain a contradiction.  
Note that $V(\Lambda_u^{-}) = (V(\Lambda) \setminus \{ u \}) \sqcup \{ u, v \}$. 
By Claim \ref{the-number-edges}(i) and (ii), we have $u, v \notin V_3^k(\Lambda_u^{-})$. 
Hence, $V_3^k(\Lambda_u^{-}) = V_3^k(\Lambda_u^{-}) \setminus \{ u, v\}$. 
On the other hand, Claim \ref{the-number-edges}(iii) implies $V_3^k(\Lambda_u^{-}) \setminus \{ u, v \} \subset V_3^k(\Lambda)$. 
Thus we obtain $V_3^k(\Lambda_u^{-}) \subset V_3^k(\Lambda) \setminus  \{ u \}$, where $u \in V_3^k(\Lambda)$. 
So we have 
\begin{align*}
|V_3^k(\Lambda_u^{-})| & \leq |V_3^k(\Lambda) \setminus  \{ u \}| \\
 & = |V_3^k(\Lambda)| - 1. 
\end{align*}
Moreover, since $\mathrm{deg}_{\mathrm{max}}(\Lambda_u^{-}) = 3$ and $\Lambda_u^{-}[V']$ is a proper subgraph of $\Lambda_u^{-}$, we have $|V_3^k(\Lambda_u^{-}[V'])| \leq |V_3^k(\Lambda_u^{-})|$ (cf.~ the proof of Claim \ref{empty_number}(ii)). 
Thus $|V_3^k(\Lambda_u^{-}[V'])| \leq |V_3^k (\Lambda_u^{-})| \leq |V_3^k (\Lambda)| - 1$, and this  contradicts $\Lambda_u^{-}[V'] \cong \Lambda$.

{\bf Case 2.} The link $\mathrm{Lk}(u, \Lambda)$ is not complete.
Then, we may assume that $w_1$ and $w_2$ do not span an edge in $\Lambda$, so they do not in $\Lambda_u^{-}$. 
Then we can see that 
\begin{enumerate}
 \item[(ii$'$)] the link $\mathrm{Lk}(v, \Lambda_u^{-})$ contains no edges. 
\end{enumerate}
Now, for a finite graph $\Gamma$, set $V_3^{*}(\Gamma):= \{ v \in V_3(\Gamma) | \ \mathrm{Lk}(v, \Gamma) \not\cong K_3^c \}$. 
Observe that the following hold by Claim \ref{the-number-edges}(i), (iii) and the above (ii$'$). 
\begin{enumerate}
 \item[$\bullet$] $V_3^{*}(\Lambda_u^{-}) \subset V_3^{*}(\Lambda) \setminus \{ u \}$ and $u \in V_3^{*}(\Lambda)$. 
 \item[$\bullet$] $|V_3^{*}(\Lambda_u^{-}[V'])| \leq |V_3^{*}(\Lambda_u^{-})|$. 
\end{enumerate}
Hence, we have $|V_3^{*}(\Lambda_u^{-}[V'])| \leq |V_3^{*}(\Lambda_u^{-})| \leq |V_3^{*}(\Lambda)| - 1$, which contradicts $\Lambda_u^{-}[V'] \cong \Lambda$. 

Thus in both case $\Lambda \not\leq \Lambda_u^{-}$ holds. 
\end{proof}

\begin{proof}[{\bf Proof of Theorem \ref{Main-theorem}(2)}.]
Propositions \ref{Anti-deg-2}, \ref{Anti-deg-4}, \ref{Anti-deg-3-complete} and \ref{Anti-deg-3-incomplete}. 
\end{proof}

Next, we prove Theorem \ref{Anti-trees-deg3}. 

\begin{proposition}
Let $T$ be a finite tree such that $\mathrm{deg}_{\mathrm{max}}(T) \geq 4$. 
Then there exists a finite tree $T'$ such that $G(T) \hookrightarrow G(T')$ and that $\mathrm{deg}_{\mathrm{max}}(T') \leq 3$ and $|T'| \leq 2|T| - 4.$
\label{Anti-degree-3}
\end{proposition}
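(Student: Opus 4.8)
The plan is to reduce the maximum degree of $T$ by one, repeatedly, using the $(-)$-construction of Definition \ref{plus-minus}, while controlling both the tree structure and the vertex count. Pick a vertex $u$ of $T$ with $\mathrm{deg}(u,T)=\mathrm{deg}_{\mathrm{max}}(T)\geq 4$ and pick two vertices $w_1,w_2\in\mathrm{Lk}(u,T)$. First I would observe that $T_u^{-}=T_u^{-}(w_1,w_2)$ is again a tree: by Lemma \ref{Computation_anti_degree}(1) it is homotopy equivalent to $T$ (contract the edge $[u,v]$), and since $|T_u^{-}|=|T|+1$ and collapsing one edge recovers $T$, the graph $T_u^{-}$ is a connected graph with $|T|$ edges on $|T|+1$ vertices, hence a tree. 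By Lemma \ref{Heteromorphic_lemma}, $G(T)\hookrightarrow G(T_u^{-})$. Moreover, by Lemma \ref{Computation_anti_degree}(1) the degree of $u$ drops to $\mathrm{deg}(u,T)-1$, the new vertex $v$ has degree $3$, and every other vertex keeps its degree; in particular $\mathrm{deg}_{\mathrm{max}}(T_u^{-})\leq\mathrm{deg}_{\mathrm{max}}(T)$, and the number of vertices of maximum degree equal to $\mathrm{deg}_{\mathrm{max}}(T)$ strictly decreases (or the maximum degree itself decreases).

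Next I would iterate this. Define a sequence $T=T^{(0)},T^{(1)},T^{(2)},\ldots$ where each $T^{(j+1)}$ is obtained from $T^{(j)}$ by one $(-)$-construction applied at a vertex of current maximum degree (provided that maximum degree is still $\geq 4$). Each step adds exactly one vertex, never increases the maximum degree, and strictly decreases the quantity $(\mathrm{deg}_{\mathrm{max}},\#\{\text{vertices of that degree}\})$ in lexicographic order, so after finitely many steps we reach a tree $T'$ with $\mathrm{deg}_{\mathrm{max}}(T')\leq 3$, and $G(T)\hookrightarrow G(T^{(1)})\hookrightarrow\cdots\hookrightarrow G(T')$ by composing the embeddings from Lemma \ref{Heteromorphic_lemma}.

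The main obstacle is the bound $|T'|\leq 2|T|-4$, since the naive count only gives that each step adds a vertex and there could in principle be many steps. The key step here is a careful accounting of how much ``excess degree'' must be absorbed. For a tree $\Lambda$ with $\mathrm{deg}_{\mathrm{max}}(\Lambda)\geq 4$, consider the quantity $\sum_{x\in V(\Lambda)}\max\{\mathrm{deg}(x,\Lambda)-3,0\}$, the total amount by which vertices exceed degree $3$. Each application of the $(-)$-construction at a vertex $u$ of degree $\geq 4$ decreases $\mathrm{deg}(u)$ by $1$ and introduces a new degree-$3$ vertex, so it decreases this excess by exactly $1$; hence the number of steps equals the initial excess $\sum_{x}\max\{\mathrm{deg}(x,T)-3,0\}$. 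I would then bound this excess: writing $n=|T|$, a tree has $\sum_x\mathrm{deg}(x,T)=2(n-1)$, and since a tree with $\mathrm{deg}_{\mathrm{max}}\geq 4$ must have at least two leaves (degree-$1$ vertices), in fact at least as many leaves as the excess plus one — more precisely, in any tree $\#\{\text{leaves}\}\geq 2+\sum_{x}\max\{\mathrm{deg}(x,T)-2,0\}$, which is the standard leaf-counting inequality obtained from $\sum_x(\mathrm{deg}(x,T)-2)=-2$. From $\sum_x\max\{\mathrm{deg}(x,T)-3,0\}\leq\sum_x\max\{\mathrm{deg}(x,T)-2,0\}-\#\{x:\mathrm{deg}(x,T)\geq 4\}$ together with the constraint that the tree has at least $4$ vertices of degree $\leq 2$ when $\mathrm{deg}_{\mathrm{max}}\geq 4$ (the degree-$\geq 4$ vertex and its neighbours force enough low-degree vertices elsewhere), one extracts that the excess is at most $n-4$. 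Then $|T'|=|T|+(\text{number of steps})=n+(\text{excess})\leq n+(n-4)=2n-4$, and $\mathrm{deg}_{\mathrm{max}}(T')\leq 3$, completing the proof; the delicate point to get right is exactly this ``$-4$'' slack in the leaf/excess comparison, which I would verify by splitting on whether $T$ has a single maximum-degree vertex or several, and on the degrees of its neighbours.
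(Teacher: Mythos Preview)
Your approach is essentially the paper's: iterate the $(-)$-construction at vertices of degree $\geq 4$, observe each step adds one vertex and drops the excess $m(T):=\sum_{x}\max\{\deg(x,T)-3,0\}$ by one, and then bound $m(T)\leq |T|-4$. The only place you and the paper differ is in how that last inequality is established, and your version is left unfinished.

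In fact your own chain of inequalities already closes the gap without any case split. From the tree identity $\sum_x(\deg(x,T)-2)=-2$ you get $\sum_x\max\{\deg(x,T)-2,0\}=n_1-2$ where $n_1$ is the number of leaves; combining with your displayed inequality gives $m(T)\leq n_1-2-\#\{x:\deg(x,T)\geq 4\}\leq n_1-3$, and since at least one vertex is not a leaf, $n_1\leq |T|-1$, so $m(T)\leq |T|-4$. No splitting on the number of maximum-degree vertices is needed. The paper instead argues by growing $T$ from a $4$-vertex subtree $T_0$ (the star on three edges around a chosen high-degree vertex, so $m(T_0)=0=|T_0|-4$) and noting that each of the $|T|-4$ added leaf-edges raises $m$ by at most $1$; this is perhaps the cleanest way to see the ``$-4$''.
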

\begin{proof}
For a finite tree $\Lambda$, set $m(\Lambda) := \Sigma_{v \in V(\Lambda)} (\mathrm{max} \{ \mathrm{deg}(v, \Lambda) - 3, 0 \})$. 
Then $\mathrm{deg}_{\mathrm{max}}(\Lambda) \geq 4$ if and only if $m(\Lambda) > 0$. 
Let $T$ be a finite tree such that $\mathrm{deg}_{\mathrm{max}}(T) \geq 4$, namely $m(T) > 0$. 
Then by applying Lemmas \ref{Computation_anti_degree}(1) and  \ref{Heteromorphic_lemma} to a vertex $u$ of $T$ with $\mathrm{deg}(u, T) \geq 4$, we obtain a finite tree $T^*$ such that
$$|T^*| = |T|+ 1, \ m(T^*) = m(T) -1, \ G(T) \hookrightarrow G(T^*) .$$
Hence, by repeating this argument, we obtain a finite tree $T'$ such that 
$$|T'| = |T|+ m(T), \ m(T') = 0 \ (\mbox{in particular, } \mathrm{deg}_{\mathrm{max}}(T') = 3), \ G(T) \hookrightarrow G(T').$$
In the remainder, we show $m(T) \leq |T| - 4$. 
Pick a vertex $v_0$ of $T$ with $\mathrm{deg}(v_0, T) \geq 4$ and let $T_0$ be a sub-tree of $\mathrm{St}(v_0, T)$, induced by $v_0$ and three vertices adjacent to $v_0$. 
Note that $|T_0| - 4 = 1 = m(T_0)$. 
The tree $T$ is obtained from $T$ by successively adding $|T| - 4$ edges to $T_0$. 
Since each added edge contributes to the number $m (\cdot)$ at most by $1$, we have the desired inequality $m(T) \leq |T| - 4$. 
\end{proof}

\begin{proof}[{\bf Proof of Theorem \ref{Anti-trees-deg3}}.]
Suppose that $\Lambda$ is a finite graph. 
In the case where $\Lambda$ is a tree of maximum degree $\leq 3$, the assertion is trivial. 
If $\Lambda$ is a tree of maximum degree $>3$, then we obtain the desired result by Proposition \ref{Anti-degree-3}. 
Hence, we may assume that $\Lambda$ is not a tree. 
Then by Theorem \ref{Anti-trees-theorem} due to Lee-Lee \cite{Lee-Lee}, there exists a finite tree $T$ such that $G(\Lambda) \hookrightarrow G(T)$ and that $|T| \leq |\Lambda| \cdot 2^{(|\Lambda| -1)}$. 
We now use Proposition \ref{Anti-degree-3}. 
Then the resulting finite tree $T'$ satisfies the desired property. 
\end{proof}

\section{Further discussions \label{final}}
In this section, we first discuss the following question due to S. Lee \cite{Lee}. 

\begin{question} 
For any graph $\Lambda$, is it possible that $G(\Lambda) \hookrightarrow G(P_n)$ for some $n$?
\label{question-anti-lines}
\end{question}

If $\Lambda$ satisfies $\mathrm{deg}_{\mathrm{max}}(\Lambda) \leq 2$, then $G(\Lambda) \hookrightarrow G(P_n)$ for some $n$ by Kim-Koberda's theorem \cite[Theorem 3.5]{Kim-Koberda-2} or Lee-Lee's theorem (Theorem \ref{Anti-trees-theorem}). 
By this fact and Theorem \ref{Anti-trees-deg3}, the above Question \ref{question-anti-lines} is reduced to the case when $\Lambda$ is a finite tree $T$ of maximum degree $3$. 
By using subdivision technique (see Lemma \ref{Complementary-subdivision}), we can further reduce Question \ref{question-anti-lines} to the case when $\Lambda = T$ satisfies the following condition. 
\begin{enumerate}
 \item[(C)] $\forall u \in V_3(T)$, $V(\mathrm{Lk}(u, T)) \subset V_2(T)$. 
\end{enumerate}
This condition says that $T$ is locally as illustrated in Figure \ref{d}(1). 
So, I would like to propose the following question. 

\begin{question}
For a finite tree $T$ satisfying the condition (C), does $G(T)$ embed into $G(P_n)$ for some $n$? 
In particular, is it possible that $G(T_0) \hookrightarrow G(P_n)$ for the tree $T_0$ in Figure \ref{d}(2) and some $n$?
\end{question}

\begin{figure}
\centering
\includegraphics[scale=0.24,clip]{./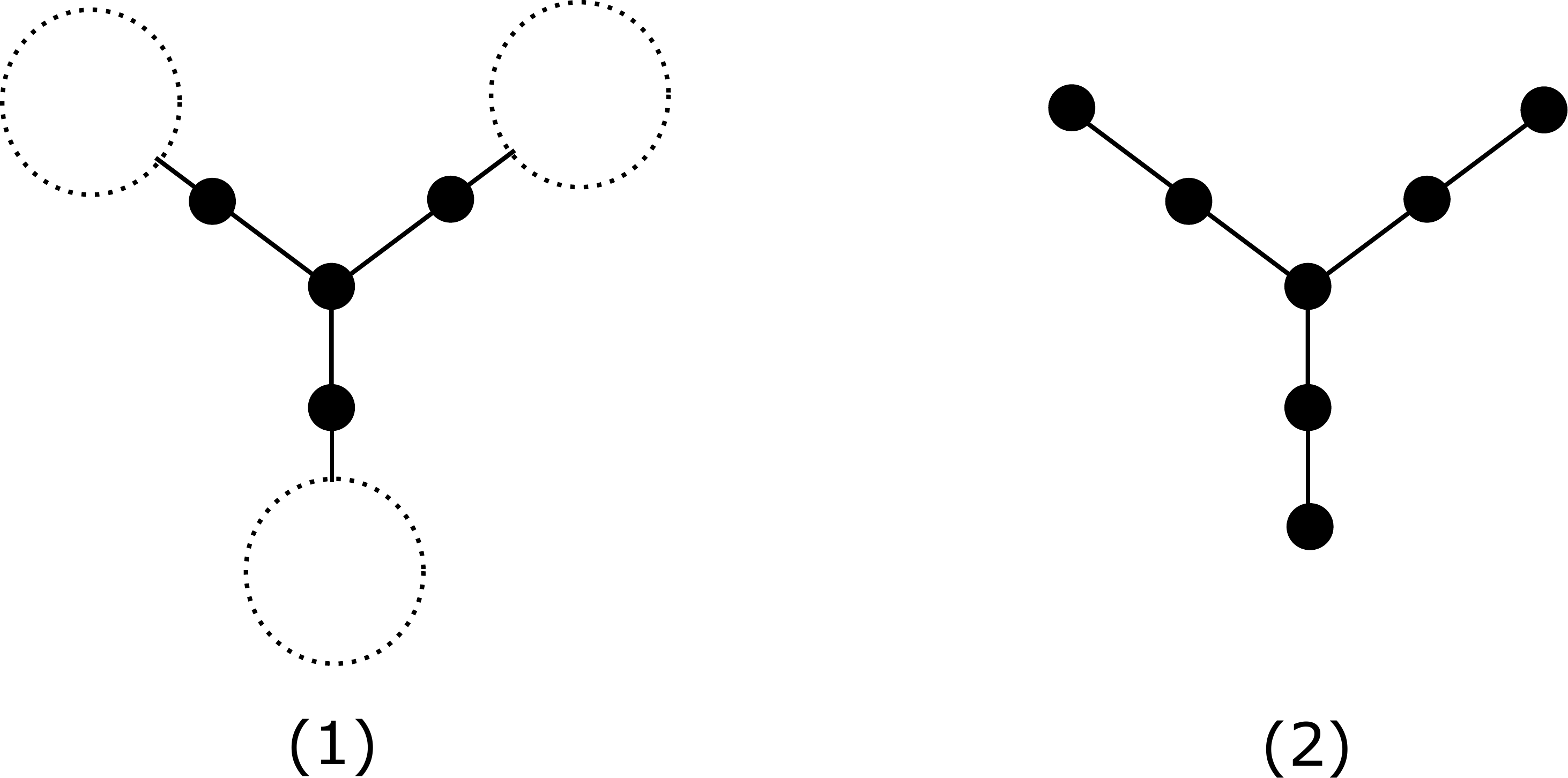}
\caption{
(1) A local picture of a tree $T$ satisfying the condition (C). 
(2) The graph $T_0$. 
}
\label{d}
\end{figure}

We next discuss relation between Corollary \ref{mapping-class-groups} and the following two known combinatorial tests for embedding RAAGs into mapping class groups. 
\begin{enumerate}
 \item[$\bullet$] The colouring test: 
 Kim and Koberda \cite{Kim-Koberda-3} proved that there exists a finite number $M$ (which depends on $\Sigma_{g, n}$) such that for any finite graph $\Lambda$ with $A(\Lambda) \hookrightarrow \mathcal{M}(\Sigma_{g,n})$, the chromatic number of $\Lambda$ does not exceed $M$. 
Note that this $M$ must be greater than or equal to the chromatic number of $\mathcal{C}(\Sigma_{g,n})$.
 \item[$\bullet$] The ``nested complexity length" test:  Bering IV, Conant and Gaster \cite{Bering-Conant-Gaster}, introduced the ``nested complexity length" of a graph and proved that for any finite graph $\Lambda$ with $A(\Lambda) \hookrightarrow \mathcal{M}(\Sigma_{g,n})$, the ``nested complexity length" of $\Lambda$ does not exceed $6g -6 + 2n$. 
\end{enumerate}
By using Corollary \ref{mapping-class-groups}, we prove the following proposition which shows that there exist RAAGs which cannot be embedded into $\mathcal{M}(\Sigma_{g,n})$, though they pass the above colouring and nested complexity length tests. 

\begin{proposition}
For any pair $(g, n)$ with $2 - 2g - n <0$, there exists a finite graph $\Lambda$ such that $A(\Lambda)$ cannot be embedded into $\mathcal{M}(\Sigma_{g,n})$, though the chromatic number of $\Lambda$ is not more than that of $\mathcal{C}(\Sigma_{g,n})$ and the nested complexity length of $\Lambda$ is not more than that of $\mathcal{C}(\Sigma_{g,n})$. 
\label{two_obst}
\end{proposition}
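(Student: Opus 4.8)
The plan is to exhibit, for each admissible pair $(g,n)$, a graph $\Lambda$ which is the complement of a linear forest, so that Corollary \ref{mapping-class-groups} applies and $A(\Lambda)\hookrightarrow\mathcal{M}(\Sigma_{g,n})$ would force $\Lambda\leq\mathcal{C}(\Sigma_{g,n})$; then I would choose $\Lambda$ large enough that it cannot be a full subgraph of the curve graph, yet small enough (in chromatic number and nested complexity length) that it passes both tests. Concretely, take $\Lambda = (P_k)^c$ for a suitably large $k$, i.e. the complement of a single path on $k$ vertices. Since $P_k$ is a linear forest, Corollary \ref{mapping-class-groups} gives: if $A((P_k)^c)\hookrightarrow\mathcal{M}(\Sigma_{g,n})$ then $(P_k)^c\leq\mathcal{C}(\Sigma_{g,n})$, equivalently (Lemma \ref{Obvious}(1)) $P_k\leq(\mathcal{C}(\Sigma_{g,n}))^c$, which just says $\mathcal{C}(\Sigma_{g,n})$ contains $k$ vertices $c_1,\dots,c_k$ with $c_i,c_{i+1}$ non-adjacent and all other pairs adjacent. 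I would argue this is impossible for large $k$: in the curve graph non-adjacency means geometric intersection, and having $k$ pairwise-disjoint curves except for a ``path'' of intersections bounds $k$ by a linear function of the complexity $\xi(\Sigma_{g,n})=3g-3+n$ — for instance, the subfamily $\{c_1,c_3,c_5,\dots\}$ consists of pairwise disjoint curves, so has at most $\xi(\Sigma_{g,n})$ (or $\xi+1$, counting a pants-decomposition-type bound) members, whence $k\leq 2\xi(\Sigma_{g,n})+O(1)$. Choosing $k$ beyond this bound makes $(P_k)^c\not\leq\mathcal{C}(\Sigma_{g,n})$, so $A((P_k)^c)\not\hookrightarrow\mathcal{M}(\Sigma_{g,n})$.

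It remains to check that $\Lambda=(P_k)^c$ passes both combinatorial tests regardless of $k$. For the nested complexity length test, the point is that $(P_k)^c$ is a cograph-like graph with no induced large ``nested'' structure: a path complement has very short nested complexity length (bounded by a small constant, since $(P_k)^c$ decomposes as an iterated join/union in a controlled way), so it is certainly at most $6g-6+2n$ for every $k$, and a fortiori at most the nested complexity length of $\mathcal{C}(\Sigma_{g,n})$. For the colouring test, I need the chromatic number of $(P_k)^c$ to stay bounded by the chromatic number of $\mathcal{C}(\Sigma_{g,n})$; since a proper colouring of $(P_k)^c$ is a partition of $\{1,\dots,k\}$ into cliques of $(P_k)^c$, i.e. into independent sets of $P_k$, and $P_k$ is bipartite, one checks $\chi((P_k)^c)=\lceil k/2\rceil$. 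This grows with $k$, so a single path complement will not do for the colouring test; instead I would use a disjoint union of many copies of a \emph{fixed} small path complement, e.g. $\Lambda=\bigsqcup_{i=1}^{N}(P_{k_0})^c$ for a fixed modest $k_0$ and large $N$. Its complement $\bigsqcup P_{k_0}$ is still a linear forest, Corollary \ref{mapping-class-groups} still applies, $\chi(\Lambda)=\chi((P_{k_0})^c)=\lceil k_0/2\rceil$ is a constant chosen $\leq\chi(\mathcal{C}(\Sigma_{g,n}))$, and the nested complexity length of a disjoint union of bounded pieces is bounded; meanwhile for $N$ large $\bigsqcup_{i=1}^N(P_{k_0})^c\not\leq\mathcal{C}(\Sigma_{g,n})$ because realising it would require $N$ pairwise-``joined'' blocks, again contradicting the complexity bound (disjointness across blocks in $(\mathcal{C})^c$ forces those curves to pairwise intersect, so a transversal of one curve per block is a large clique in $\mathcal{C}(\Sigma_{g,n})$, bounded by $\xi(\Sigma_{g,n})$).

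The main obstacle is the purely surface-topological step: showing that the specific graph $\Lambda$ (a linear forest's complement, with the chosen parameters) genuinely fails to embed as a full subgraph of $\mathcal{C}(\Sigma_{g,n})$, with a bound that is simultaneously (i) violated by my choice of parameters and (ii) compatible with keeping $\chi(\Lambda)$ and the nested complexity length small. This amounts to a clean clique/antichain counting argument in the curve graph — the maximal size of a collection of pairwise-intersecting (or pairwise-disjoint) curves is controlled by $3g-3+n$ — and then a bookkeeping check that the chosen $N$ (and $k_0$) land strictly between ``passes both tests'' and ``is not a full subgraph''. Once the parameters are pinned down, the rest is an immediate application of Corollary \ref{mapping-class-groups} together with the known statements of the colouring test \cite{Kim-Koberda-3} and the nested complexity length test \cite{Bering-Conant-Gaster}.
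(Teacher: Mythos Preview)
Your overall strategy is right and is exactly the paper's: take $\Lambda$ to be the complement of a linear forest so that Corollary~\ref{mapping-class-groups} converts non-embedding in $\mathcal{C}(\Sigma_{g,n})$ into non-embedding in $\mathcal{M}(\Sigma_{g,n})$, and then pick parameters so that $\Lambda$ still passes the two tests. But your second construction contains a genuine error that breaks the argument.

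If $\Lambda^{c}$ is to be the linear forest $\bigsqcup_{i=1}^{N}P_{k_0}$, then $\Lambda$ is the \emph{join} $\ast_{i=1}^{N}(P_{k_0})^{c}$, not the disjoint union you wrote. For a join one has $\chi(\Lambda)=\sum_{i}\chi((P_{k_0})^{c})=N\lceil k_0/2\rceil$, not $\lceil k_0/2\rceil$; so $\chi(\Lambda)$ grows linearly in $N$ just as in your first attempt. Now your non-embedding argument (a transversal of one curve per block gives a clique in $\mathcal{C}(\Sigma_{g,n})$, hence $N\le 3g-3+n$) forces you to take $N>3g-3+n$, and then $\chi(\Lambda)\ge N>3g-3+n$, which is exactly the lower bound you have for $\chi(\mathcal{C}(\Sigma_{g,n}))$. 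The chromatic comparison therefore does not close with the crude clique bound alone. (Your first attempt with $(P_k)^{c}$ has the same defect: the alternating subfamily gives $\lceil k/2\rceil\le 3g-3+n$, while $\chi((P_k)^{c})=\lceil k/2\rceil$.)

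The paper resolves this by taking $k_0=2$, so $\Lambda=K_r(2)$, and then invoking a \emph{sharper} non-embedding threshold specific to $K_r(2)$: by \cite[Lemma~30]{Bering-Conant-Gaster}, $K_r(2)\le\mathcal{C}(\Sigma_{g,n})$ iff $r\le g+\lfloor(g+n)/2\rfloor-1$. Choosing $r=g+\lfloor(g+n)/2\rfloor$ already blocks the embedding, yet $\chi(K_r(2))=r$ and the nested complexity length is $2r$, both of which sit below $3g-3+n$ and $6g-6+2n$ respectively. The gap between $g+\lfloor(g+n)/2\rfloor$ and $3g-3+n$ is precisely what your transversal/clique bound cannot see; without that external lemma (or an analogous sharp bound for your $(P_{k_0})^{c}$ blocks), the parameters cannot be chosen to satisfy all three constraints simultaneously.
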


To prove this, we prepare the following lemma. 

\begin{lemma}
Let $K_{r}(2)$ be the complete $r$-partite graph of order $2$ (i.e., the complement of the disjoint union of $r$ copies of $P_2$). 
Then $A(K_{r}(2)) \hookrightarrow \mathcal{M}(\Sigma_{g,n})$ if and only if $r \leq g + \lfloor \frac{g + n}{2} \rfloor  -1$. 
\label{r-partite}
\end{lemma}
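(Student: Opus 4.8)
The plan is to identify $K_r(2)$ as the complement of a linear forest (namely $\sqcup_r P_2$) so that Corollary \ref{mapping-class-groups} applies, and thereby reduce the ``only if'' direction to determining the largest $r$ for which $K_r(2) \leq \mathcal{C}(\Sigma_{g,n})$. Indeed, $K_r(2)^c = \sqcup_r P_2$ is a linear forest, so Corollary \ref{mapping-class-groups} gives that $A(K_r(2)) \hookrightarrow \mathcal{M}(\Sigma_{g,n})$ implies $K_r(2) \leq \mathcal{C}(\Sigma_{g,n})$. Conversely, Koberda's embedding theorem gives that $K_r(2) \leq \mathcal{C}(\Sigma_{g,n})$ implies $A(K_r(2)) \hookrightarrow \mathcal{M}(\Sigma_{g,n})$. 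Hence the lemma is equivalent to the purely topological statement that $K_r(2) \leq \mathcal{C}(\Sigma_{g,n})$ if and only if $r \leq g + \lfloor \tfrac{g+n}{2}\rfloor - 1$.

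For the topological statement, a full subgraph of $\mathcal{C}(\Sigma_{g,n})$ isomorphic to $K_r(2)$ is precisely a collection of $r$ pairs $\{\alpha_i, \beta_i\}$ of isotopy classes of essential simple closed curves such that $\alpha_i$ and $\beta_i$ intersect (have positive geometric intersection number) for each $i$, while curves from different pairs are disjoint and non-isotopic. So I would show: first, the maximum number of such ``mutually disjoint intersecting pairs'' that can be simultaneously realized on $\Sigma_{g,n}$ is exactly $g + \lfloor \tfrac{g+n}{2}\rfloor - 1$. The upper bound: each intersecting pair $\{\alpha_i,\beta_i\}$ fills a subsurface of positive complexity, and these filling subsurfaces are pairwise disjoint up to isotopy (since all curves in pair $i$ are disjoint from all curves in pair $j$). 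A standard complexity/Euler characteristic count — a once-punctured torus or four-punctured sphere each needs at least one unit of ``complexity'' and the pieces are disjoint — yields the stated bound once one optimizes how to partition the genus and punctures into such pieces (a one-holed torus costs genus $1$, a four-holed sphere costs two punctures/boundary components, etc.). For the lower bound (realizability): explicitly build the configuration by decomposing $\Sigma_{g,n}$, e.g.\ take $g$ one-holed tori each carrying a standard pair of dual curves, and use the remaining punctures/genus to form $\lfloor\tfrac{g+n}{2}\rfloor - 1$ further four-holed-sphere-type pieces each carrying a crossing pair, arranged so the pieces are disjoint.

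The key step, and the main obstacle, is the sharp subsurface count giving exactly $g + \lfloor \tfrac{g+n}{2}\rfloor - 1$: one must be careful about which subsurfaces of $\Sigma_{g,n}$ support an ``intersecting pair'' of essential curves (essentially the one-holed torus $\Sigma_{1,1}$ and the four-holed sphere $\Sigma_{0,4}$ are the minimal ones), and then solve the combinatorial optimization of packing as many disjoint copies of these as possible into a genus-$g$, $n$-punctured surface. Each one-holed-torus piece consumes one unit of genus; each four-holed-sphere piece consumes the equivalent of two boundary curves, which can come from punctures or from gluing. Cutting along the boundary curves of all the pieces must leave a (possibly disconnected) surface of non-negative complexity, and chasing the Euler characteristic bookkeeping $\chi(\Sigma_{g,n}) = 2-2g-n$ through this decomposition pins down the extremum as $g + \lfloor\tfrac{g+n}{2}\rfloor - 1$. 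I would also invoke the hypothesis $2-2g-n<0$ to guarantee at least one such piece exists. Finally, once the lemma's equivalence with the topological fact is in place, all that remains is to verify the explicit configuration realizing the maximum is a \emph{full} (induced) subgraph, i.e.\ that curves in distinct pairs are genuinely non-adjacent and non-equal in $\mathcal{C}(\Sigma_{g,n})$, which is immediate from the disjointness of the supporting subsurfaces.
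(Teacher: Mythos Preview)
Your reduction is exactly what the paper does: use that $K_r(2)^c = \sqcup_r P_2$ is a linear forest, apply Corollary~\ref{mapping-class-groups} and Koberda's embedding theorem, and reduce the lemma to the curve-graph statement $K_r(2) \leq \mathcal{C}(\Sigma_{g,n}) \Leftrightarrow r \leq g + \lfloor \tfrac{g+n}{2}\rfloor - 1$. The only difference is that the paper does not prove this last statement from scratch but simply cites it as \cite[Lemma~30]{Bering-Conant-Gaster}; your subsurface-packing sketch is along the right lines, but it is unnecessary for the paper's purposes.
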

\begin{proof}
By \cite[Lemma 30]{Bering-Conant-Gaster}, $K_{r}(2)  \leq \mathcal{C}(\Sigma_{g,n})$ if and only if $r \leq g + \lfloor \frac{g + n}{2} \rfloor  -1$. 
Hence, we obtain the desired result by using Koberda's embedding theorem and Corollary \ref{mapping-class-groups}. 
\end{proof}

\begin{proof}[{\bf Proof of Proposition \ref{two_obst}}.]
For $r= g + \lfloor \frac{g + n}{2} \rfloor$, $A(K_r(2))$ cannot be embedded into $\mathcal{M}(\Sigma_{g,n})$ by Lemma \ref{r-partite}. 
However, as in the proof of \cite[Corollary 16]{Bering-Conant-Gaster}, we can easily see that the chromatic number (resp.~the nested complexity length) of $K_{r}(2)$ is equal to $r$ (resp.~ $2r$) and the chromatic number (resp.~the nested complexity length) of $\mathcal{C}(\Sigma_{g,n})$ is not less than $3g - 3 + n$ (resp.~ $6g- 6 + 2n$). 
Set $\Lambda:= K_r(2)$. 
\end{proof}

\section{Acknowledgement}
The author thanks my supervisor, Makoto Sakuma for carefully reading the first draft and suggesting a number of improvements. 
The author also thanks Erika Kuno for giving him helpful comments and telling him a question on embeddings of RAAGs into mapping class groups \cite[Question 1.1]{Kim-Koberda-3}. 
Moreover, the author owes Proposition \ref{two_obst}  to Thomas Koberda who informed the author of the paper due to Bering IV, Conant and Gaster \cite{Bering-Conant-Gaster}.

\end{document}